\newtheorem{theorem}{Theorem}[section]
\newtheorem{lemma}[theorem]{Lemma}
\newtheorem{proposition}[theorem]{Proposition} % Kensuke added
\newtheorem{corollary}[theorem]{Corollary} % Kensuke added
\theoremstyle{definition}
\newtheorem{definition}[theorem]{Definition}
\theoremstyle{remark}
\newtheorem{remark}[theorem]{Remark}
\numberwithin{equation}{section}
\DeclareMathOperator{\cn}{cn}
\DeclareMathOperator{\sn}{sn}
\DeclareMathOperator{\am}{am}
\newcommand{\supp}{\mathrm{supp}}
\begin{document}

%\title[Bla]{Nongraphical minimizers for an obstacle problem for planar elastic curves}
\title[Nongraphical obstacle problem for elastic curves]{A nongraphical obstacle problem for elastic curves}

% Alternative titles
%  'Around three and a half worlds in zero days' (ohne Formel)
%  'Around three hemispheres in zero days'
%  'The minimal curvature cost of short round-the-world trips'
%  'Infinitesimally short circumnavigations cost an additional hemisphere'
%  'On the cost of short world trips'

\author{Marius Müller}
\address[M.~M\"uller]{Universität Augsburg, Institut für Mathematik, Universitätsstraße 14, 86159 Augsburg}
%%    Current address
%\curraddr{Department of Mathematics and Statistics,
%Case Western Reserve University, Cleveland, Ohio 43403}
\email{marius1.mueller@uni-a.de}

\author{Kensuke Yoshizawa}
\address[K.~Yoshizawa]{Nagasaki University, Faculty of Education, 
1-14 Bunkyo-machi, Nagasaki, 852-8521}
\email{k-yoshizaw@nagasaki-u.ac.jp}

%    General info
\subjclass[2020]{49J40, 53A04, 49N60, 35R35}
%Primary: (TODO) Secondary: (TODO)
% 49J40 Variational inequalities
% 53A04 Curves in Euclidean and related spaces
% 49N60 Regularity of solutions in optimal control
% 49J15 Existence theories for optimal control problems involving ordinary differential equations
% 49J20 Existence theories for optimal control problems involving partial differential equations
% 35R35 Free boundary problems for PDEs
% 49Q20 Variational problems in a geometric measure-theoretic setting

\date{\today}

%\dedicatory{This paper is dedicated to our advisors.}

\keywords{Obstacle problem, bending energy, elastica, variational inequality}

\begin{abstract}
We study an obstacle problem for the length-penalized elastic bending energy for open planar curves pinned at the boundary. 
We first consider the case without length penalization and investigate the role of global minimizers among graph curves in our minimization problem for planar curves.
In addition, for large values of the length-penalization parameter $\lambda>0$, we expose an explicit threshold parameter above which minimizers touch the obstacle, regardless of its shape. On contrary, for small values of $\lambda>0$ we show that the minimizers do not touch the obstacle, and they are given by an explicit elastica. 
\end{abstract}

\maketitle

\section{Introduction}

In this article we study an obstacle problem for the (length-penalized) \textit{elastic bending energy} 
\begin{equation}
    \mathcal{E}_\lambda[\gamma] := \int_\gamma |\boldsymbol{\kappa}|^2 \; \mathrm{d}s + \lambda \int_\gamma 1 \; \mathrm{d}s = B[\gamma] + \lambda L[\gamma],
\end{equation}
where $\gamma \in W^{2,2}(0,1,\mathbf{R}^2)$ is an immersed curve with curvature vector $\boldsymbol{\kappa}= \partial_s^2 \gamma$ and $\lambda \geq 0$ is a penalization parameter.  
The critical points of the bending energy $B$ have attracted great interest for more than 250 years, and their %its
history can go back to L.\ Euler \cite{Euler}. They are called \textit{(free) elasticae} and satisfy the Euler--Lagrange equation $2 \partial_s^2 \boldsymbol{\kappa} + |\boldsymbol{\kappa}|^2 \boldsymbol{\kappa}  = 0$. 
Recently, %the
so-called \textit{obstacle problems} for the bending energy  have
been studied: e.g., minimization problems for $B$ (i) among graphs with unilateral constraint \cite{DD18, Mueller19, Miura21, Mueller21, Ysima, GO21, GrunauOkabe25}; (ii) among graphs with adhesive terms \cite{Miura16}; (iii) among closed curves confined in a given set \cite{DMN18}; and (iv) an obstacle problem for a generalized bending energy \cite{DMOY24}. 

In this paper, we consider the obstacle problem for the bending energy for \textit{open planar} curves. 
More precisely, for given \textit{obstacle function} $\psi : \mathbf{R} \rightarrow \mathbf{R}$ we consider the minimization problem  with the admissible set 
\begin{equation}
    A := \{ \gamma \in W^{2,2}_{\rm imm}(0,1; \mathbf{R}^2) \mid \gamma(0) = (0,0), \ \gamma(1) =(1,0), \ \gamma^2(x) \geq \psi(\gamma^1(x)) \  \textrm{for all $x \in [0,1]$} \}.  
\end{equation}
Here, $W^{2,2}_{\rm imm}(0,1; \mathbf{R}^2)$ denotes the set of immersed $W^{2,2}$-curves: 
\[
 W^{2,2}_{\rm imm}(0,1; \mathbf{R}^2):=\Set{ \gamma \in W^{2,2}(0,1; \mathbf{R}^2) |\,  |\gamma'(x)| \neq 0 \ \text{ for all} \ x\in[0,1] }.
\]
In general we impose the following assumptions on the obstacle function $\psi$:
\begin{itemize}
    \item[(A1)] $\psi: \mathbf{R} \rightarrow \mathbf{R}$ is uniformly continuous. 
    \item[(A2)] $\psi\vert_{(-\infty,0]}< 0$ and $\psi\vert_{[1,\infty)} < 0$.
    \item[(A3)] $\max_{x \in [0,1]} \psi(x)> 0$. 
\end{itemize}
An important special case, which we will often consider, is the case of \textit{symmetric cone obstacles}, that is, $\psi\vert_{(-\infty,\frac{1}{2}]}$ is affine linear and $\psi= \psi(1-\cdot)$. 
Notice that without Assumption (A3), each minimizer would be given by a trivial line segment connecting $(0,0)$ and $(1,0)$. 

It turns out that in the case of $\lambda = 0$ 
global minimizers in $A$ can not exist.
Due to this fact, we study only local minimizers in this case. On contrary, for $\lambda > 0$ one readily obtains existence of minimizers with the standard direct method. This motivates again the consideration of the length-penalized elastic energy.
A goal of this article is to understand how 
the behavior of minimizers of $\mathcal{E}_\lambda$ depends on $\lambda$.
%their
%behavior depends on $\lambda$.

Many articles involving the obstacle problem for the bending energy have focused on the smaller admissible set of \textit{graphs} lying above the obstacle, i.e. the authors minimize in 
\begin{align}\label{eq:M_graph}
    A_{\rm graph}:=
\Set{\gamma:[0,1]\to\mathbf{R}^2 |  \gamma(x)=(x,u(x))\ \text{ for some }\ u\in W^{2,2}(0,1)\cap W^{1,2}_0(0,1) \text{ with } u\geq\psi} \subseteq A. 
\end{align}
Our larger admissible set $A$ explicitly allows for open nongraphical curves, which we believe is a less studied subject in the current literature.

Allowing the curves to be non-graphical yields new difficulties. 
One of difficulties comes from \textit{loss of compactness}.
For example, let $c_\lambda \in A$ be a circular arc with radius $\lambda^{-\frac{1}{2}}$ for a sufficiently small $\lambda>0$. Then we see that $\mathcal{E}_\lambda[c_\lambda]=O(\sqrt{\lambda})$ as $\lambda\to0$.
The smaller $\lambda$, the greater the distance between  $c_\lambda$ and $\psi$, and this also implies that $c_\lambda$ does not touch $\psi$.
On the other hand, as in Remark~\ref{rem:escaping-circular_arcs} below, there also exists an admissible curve $C_\lambda\in A$ such that $C_\lambda$ always touches $\psi$ but also satisfies $\mathcal{E}_\lambda[C_\lambda]=O(\sqrt{\lambda})$ as $\lambda\to0$. 
The points where the curves $C_\lambda$ touch the obstacle escape to infinity as $\lambda \rightarrow 0$.
As a result, one can find both a `touching' planar curve and a `non-touching' planar curve with the vanishing energy as $\lambda\to0$, and hence it is not easy to give some answers to  the question
\begin{itemize}
    \item[(Q1)] How large is the \textit{coincidence set} $I_\gamma := \{ x \in [0,1] \mid \gamma^2(x) = \psi( \gamma^{1}(x)) \}$ of a minimizer $\gamma \in A$?
\end{itemize}
It will turn out that the answer to this question will in general depend on $\lambda$ and the precise shape of $\psi$. 
This is completely different from  results for
graphical curves, as it is shown that minimizers of $\mathcal{E}_0=B$ among the graphical curves must always touch the obstacle. 
In addition, our problem has the same difficulties as those in the existing literature. 
For example, several useful tools such as a maximum principle are not available since $\mathcal{E}_\lambda$ is an energy of higher order. 
Another difficulty is that (local) minimizers for the obstacle problem are characterized 
by a
so-called \textit{variational inequality}, not equality. 
This causes  a loss of regularity: in fact, it is well known that the optimal regularity for minimizers among open  graphical curves (in the sense of Sobolev class) is $W^{3,\infty}$. 
%Therefore,
Due to the fact that (local) minimizers $\gamma \in A$ only satisfy a variational inequality, it is not a standard matter to answer questions as follows
\begin{itemize}
    \item[(Q2)] What is the \textit{optimal regularity} of $\gamma$?
    \item[(Q3)] What are \textit{geometric properties} of $\gamma$? (E.g.\ symmetry, graph representation etc.) 
\end{itemize}
%it is not a standard matter to answer questions about (local) minimizers $\gamma \in A$ as follows: 
%\begin{itemize}
%    \item[(Q2)] What is their \textit{optimal regularity}? 
%    \item[(Q3)] What are \textit{geometric properties} of (local) minimizers? (E.g.\ symmetry, graph representation etc.)
%\end{itemize}
%Note that due to
Because of such difficulties questions (Q1)--(Q3) % of uniqueness and symmetry 
are delicate even if we restrict the class $A$ to the symmetric class 
$A_{\rm sym}$, where 
\begin{equation}
    A_{\rm sym} : = \{ \gamma \in A \mid \gamma(1-x)= (1-\gamma_1(x), \gamma_2(x))  \ \text{ for all } \ x\in [0,1]\}.
\end{equation}

Our first main result concerns the case $\lambda=0$ as a natural extension of existing studies. Here let us recall the known results.
According to \cite{Miura21},
if $\psi$ is a symmetric cone obstacle and satisfies $\psi(\frac{1}{2}) < h_*$, where
\begin{align}\label{def:h_*}
    h_*:=2\mathsf{B}(\tfrac{3}{4}, \tfrac{1}{2})^{-1} \simeq 0.83463 % 0.8346262684
\end{align}
and $\mathsf{B}$ denotes the Beta function, 
then minimizers of $\mathcal{E}_0=B$ in $A_{\rm graph}\cap A_{\rm sym}$ uniquely exist and can be computed explicitly. 
More precisely, away from the obstacle, the minimizer is characterized as a specific free elastica, that is a particular solution of $2\partial_s^2\boldsymbol{\kappa}+|\boldsymbol{\kappa}|^2\boldsymbol{\kappa}=0$. 
In addition, the minimizer touches $\psi$ only at its tip $(\frac{1}{2},\psi(\frac{1}{2}))$ with a horizontal tangent vector, %and nowhere else, 
and its curvature vanishes at the endpoints. 
In summary, if $\psi(\frac{1}{2})<h_*$, then the minimizer of $\mathcal{E}_0$ in $A_{\rm graph}\cap A_{\rm sym}$ consists of two isometric copies of a free elastica $\gamma_{\rm rect}$ chosen in such a way that $\kappa(0) = 0$ (see Figure~\ref{fig:SCF} (left)), which we call a \textit{symmetric cut-and-glued free-elasticae} in this paper.
On the other hand, if a symmetric cone obstacle $\psi$ satisfies $\psi(\frac{1}{2}) \geq h_*$, then minimizers do not exist, and hence symmetric cut-and-glued free-elasticae cannot be considered in $A_{\rm graph}$.

In this paper, even in the case of $\psi(\frac{1}{2}) \geq h_*$, we consider symmetric cut-and-glued free-elasticae by extending its notion to non-graphical curves (see Figure~\ref{fig:SCF} and Definition~\ref{def:SCF} for more details).  
As symmetric cut-and-glued free-elasticae yield minimizers in $A_{\rm graph}$ for suitably small obstacles, it is reasonable to expect that they also yield local minimizers of $\mathcal{E}_0$ in $A$. However, this conclusion turns out to be false above the aforementioned threshold height $h_*$. %, given explicitly by $h_*$ in \eqref{def:h_*}.
This destabilization phenomenon is characterized in the first main result and can be understood as a contribution to question (Q3) in the case of $\lambda= 0$. 

\begin{theorem}[Destabilization of symmetric cut-and-glued free-elasticae]\label{thm:destabilization-SCF}
Assume that $\psi$ is a symmetric cone obstacle and satisfies Assumptions (A1)--(A3). %\eqref{eq:psi-condition}. 
Let $\gamma\in A$ be a unique symmetric cut-and-glued free-elastica. 
Then the following assertions hold: 
\begin{itemize}
    \item[(i)] if $\psi(\frac{1}{2})<h_*$, then $\gamma$ is a local minimizer of $B$ in $A_{\rm sym}$ (but not a global minimizer);
    \item[(ii)] if $\psi(\frac{1}{2}) \geq h_*$, then $\gamma$ is not a local minimizer of $B$ in $A_{\rm sym}$ (as well as $A$), 
\end{itemize}
%where $A_{\rm sym}:= \Set{ \gamma \in A |  \gamma(1-x)= (1-\gamma_1(x), \gamma_2(x))  \ \text{ for all } \ x\in [0,1]}$. 
where $h_*>0$ is the universal constant defined by \eqref{def:h_*}. 
\end{theorem}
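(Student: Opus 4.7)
The key reduction is that, in the symmetric class $A_{\rm sym}$, perturbations decompose into symmetric halves, so we can analyze $\gamma$ on $[0,\tfrac{1}{2}]$ where it is a single copy of $\gamma_{\rm rect}$. The latter admits an explicit parametrization in terms of Jacobi elliptic functions (modulus $k = 1/\sqrt{2}$), which makes both the Jacobi operator governing the second variation and the one-parameter family of tip-translated cut-and-glued elasticae amenable to analysis. The threshold $h_* = 2\mathsf{B}(\tfrac{3}{4},\tfrac{1}{2})^{-1}$ encodes, by \cite{Miura21}, the maximum tip height for which the cut-and-glued free-elastica remains graphical.

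For (i), assume $\psi(\tfrac{1}{2}) < h_*$. Then $\gamma \in A_{\rm graph}$ and, by \cite{Miura21}, it minimizes $B$ on $A_{\rm graph} \cap A_{\rm sym}$. To upgrade this to local minimality on the larger class $A_{\rm sym}$, I would compute the second variation of $B$ at $\gamma$ restricted to $[0,\tfrac{1}{2}]$ subject to the induced constraints: $\gamma(0) = (0,0)$, $\kappa(0) = 0$, the symmetry-forced tangent condition at the tip, and the obstacle inequality at $(\tfrac{1}{2}, \psi(\tfrac{1}{2}))$. Using the elliptic parametrization of $\gamma_{\rm rect}$, the lowest eigenvalue of the associated Jacobi operator on this admissible cone of perturbations can be computed and shown to be strictly positive exactly when $\psi(\tfrac{1}{2}) < h_*$; a Taylor expansion then produces the strict local minimality. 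Non-global-minimality in $A$ is immediate since $\inf_A B = 0$ is approached by large circular arcs $c_R$ with $B[c_R] = O(1/R) \to 0$, whereas $B[\gamma] > 0$.

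For (ii), assume $\psi(\tfrac{1}{2}) \geq h_*$. I would exhibit an explicit admissible competitor with strictly smaller energy by considering the one-parameter family $\{\gamma_h\}_{h \geq \psi(1/2)} \subset A_{\rm sym}$ of symmetric cut-and-glued free-elasticae with tip translated to $(\tfrac{1}{2}, h)$. Each $\gamma_h$ is admissible because the cone-type obstacle is touched only at its apex, so lifting the tip opens a uniform positive gap against $\psi$; moreover $\gamma_h \to \gamma$ in $W^{2,2}$ as $h \to \psi(\tfrac{1}{2})^+$. The energy $h \mapsto B[\gamma_h]$ can be written as an explicit combination of complete elliptic integrals whose extremal height is exactly $h_*$, with $B[\gamma_h]$ decreasing for $h > h_*$. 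Hence for $\psi(\tfrac{1}{2}) > h_*$ some $h > \psi(\tfrac{1}{2})$ yields $B[\gamma_h] < B[\gamma]$, destroying local minimality; the borderline case $\psi(\tfrac{1}{2}) = h_*$ is handled by a second-order expansion along the same family. The main technical obstacle across both parts is precisely this explicit elliptic-integral analysis: linking the sign of $\tfrac{d}{dh} B[\gamma_h]$ (equivalently, the positivity of the Jacobi operator) to the universal constant $h_* = 2\mathsf{B}(\tfrac{3}{4},\tfrac{1}{2})^{-1}$.
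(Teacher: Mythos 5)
Your route is genuinely different from the paper's, but both halves of it rest on assertions that are exactly the hard technical content. For (i), the spectral plan is both unnecessary and incomplete: the claim that the lowest eigenvalue of the Jacobi operator on the admissible cone is strictly positive precisely when $\psi(\tfrac12)<h_*$ is not proved, and even granting it, positivity of a second variation at a constrained critical point in $W^{2,2}$ does not by itself give local minimality — you would still need a coercivity/remainder estimate, and the active obstacle constraint at the tip means you work on a cone, not a manifold. Moreover, the threshold $h_*$ is a first-order, not a second-order, phenomenon: by Lemma~\ref{lem:vari-ineq-and-SCF} the symmetric cut-and-glued free-elastica satisfies the variational inequality \eqref{eq:vari-ineq-explict} if and only if $\lim_{s\uparrow L/2}k'(s)\leq 0$, and by Lemma~\ref{lem:exsitence-SCF} this sign changes exactly at $h_*$, so a Jacobi-eigenvalue crossing is not what distinguishes the two regimes. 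The paper's proof of (i) needs none of this: for $\psi(\tfrac12)<h_*$ the curve is a graph with $(\gamma^1)'>0$, so every $W^{2,2}$- (hence $C^1$-) close competitor in $A_{\rm sym}$ is again a graph, and the known global minimality in $A_{\rm graph}\cap A_{\rm sym}$ immediately yields local minimality in $A_{\rm sym}$. You cite the graph result but never use this openness of the graph condition, which is the step that actually closes the argument; your non-global-minimality argument via large circular arcs is fine and matches Lemma~\ref{lem:3.10}.

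For (ii), the tip-lifting family $\{\gamma_h\}$ is the right mechanism when $\psi(\tfrac12)>h_*$ (it is the finite analogue of the paper's argument that a local minimizer must satisfy \eqref{eq:vari-ineq-explict}, which fails since $\lim_{s\uparrow L/2}k'(s)>0$ by Lemma~\ref{lem:exsitence-SCF}-(iii)), but the two claims carrying all the weight are only asserted: (a) admissibility of $\gamma_h$ over the original cone away from the tip (the fact that an SCF touches a symmetric cone only at the apex requires the argument given in the remark after Definition~\ref{def:SCF}, plus a continuity argument as $h\downarrow\psi(\tfrac12)$), and (b) that $h\mapsto B[\gamma_h]$ is strictly decreasing for $h>h_*$ with extremum exactly at $h_*$ — this is precisely the elliptic-integral/first-variation computation you defer, and without it there is no competitor with strictly smaller energy. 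Most seriously, the borderline case $\psi(\tfrac12)=h_*$ cannot be dispatched by ``a second-order expansion along the same family'' without actually computing that the second (or some finite) derivative of $B[\gamma_h]$ at $h_*$ is negative: at the critical height the SCF coincides with $\gamma_{\rm rect}$ and, as the paper observes, it \emph{does} solve the variational inequality, so every first-order test fails. The paper therefore uses a different device there: it inserts small vertical segments at the endpoints to produce nearby admissible curves of equal energy with empty coincidence set, which would then have to have analytic curvature by Lemma~\ref{lem:E-L_eq_on_noncoincidence}, contradicting the jump of $k'$ at the gluing point. Without either that trick or a genuine proof of the asserted monotonicity/second-order behaviour of $B[\gamma_h]$, your treatment of $\psi(\tfrac12)=h_*$ is a gap.
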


%%%%%%%%%%%%%%%%%%%%%%%%%%%%%% Maple figure ver.1 
\begin{center}
    \begin{figure}[htbp]
      \includegraphics[scale=0.22]{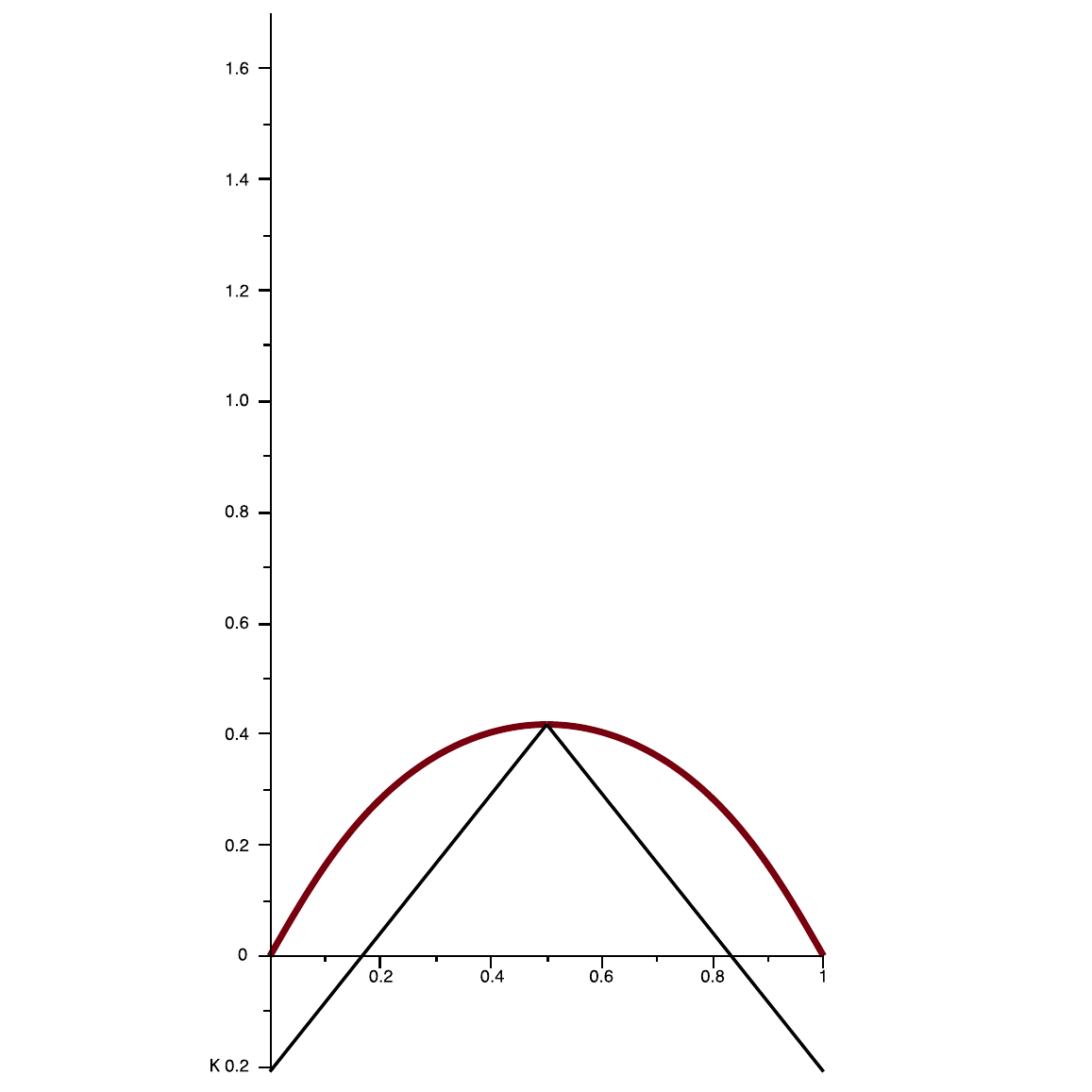}
      \hspace{-15pt}
      \includegraphics[scale=0.22]{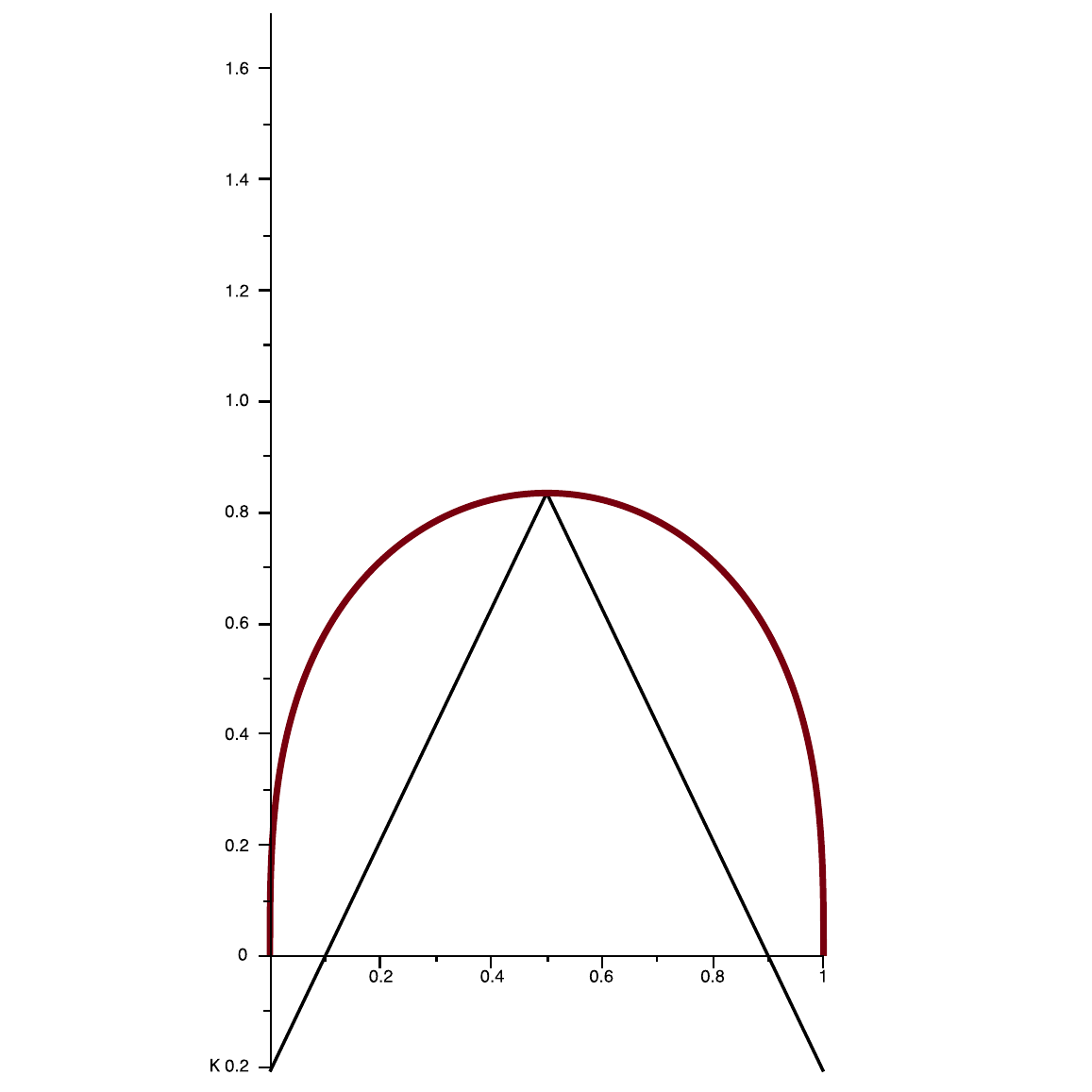}
      \hspace{0pt}
      \includegraphics[scale=0.22]{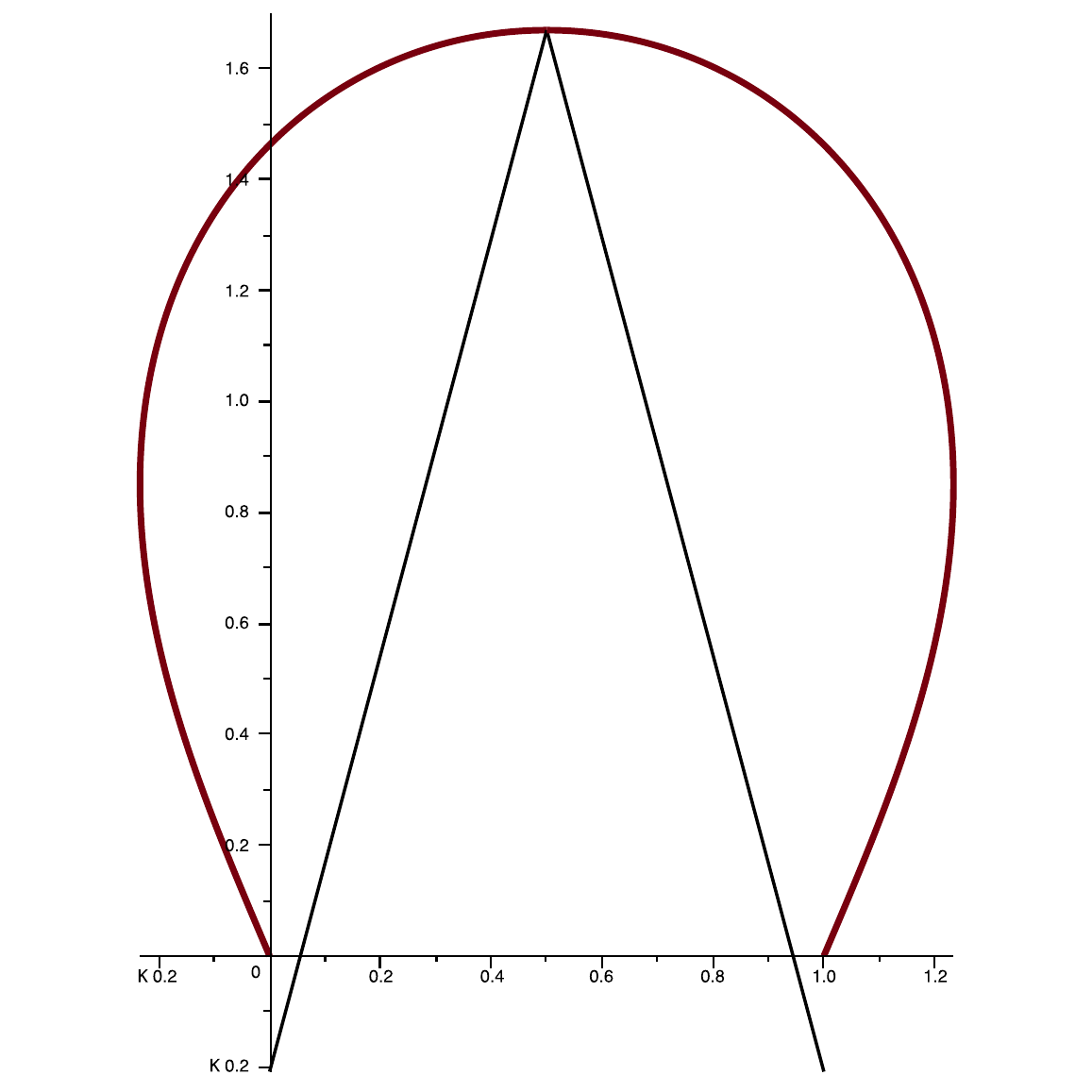} \\
      \includegraphics[scale=0.18]{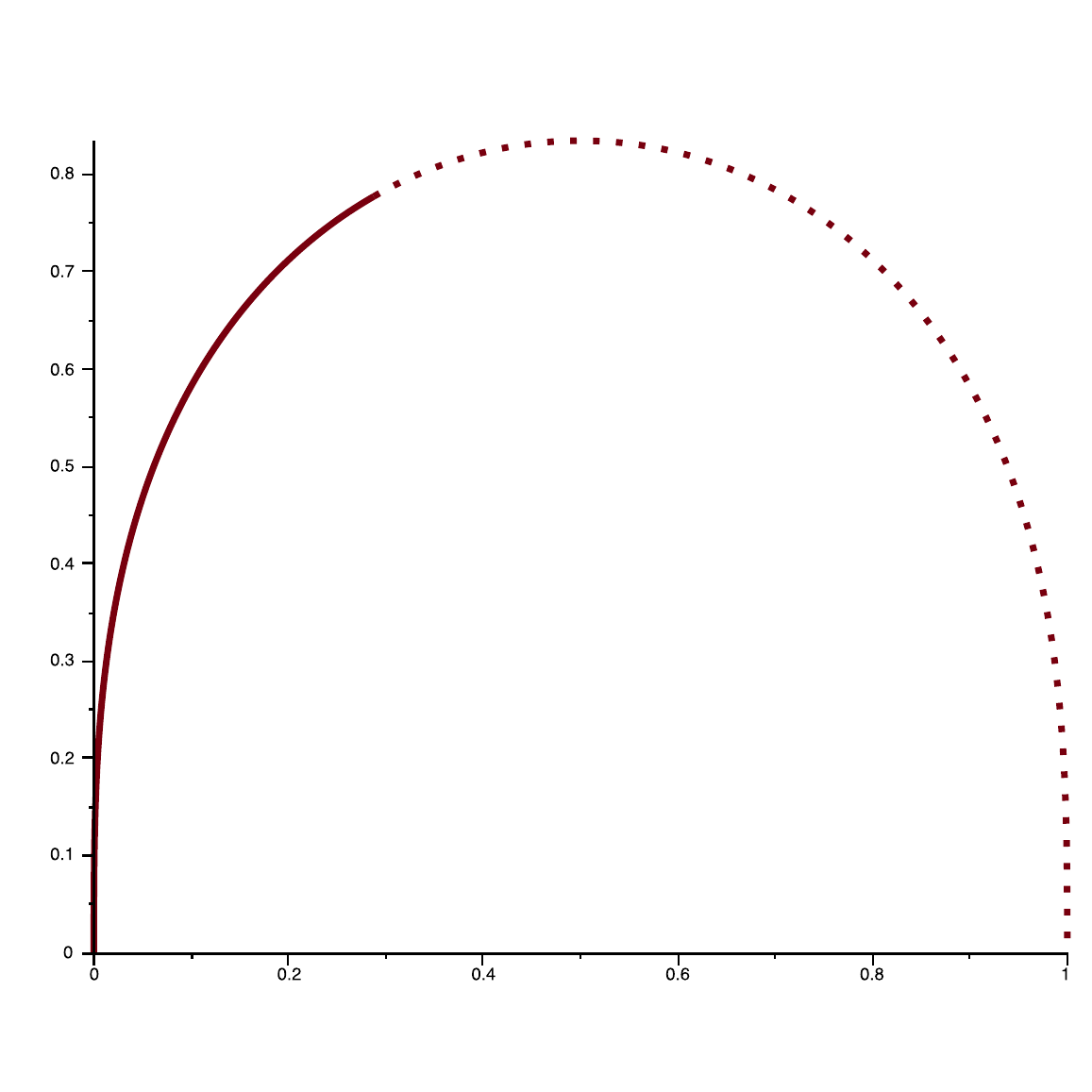}
      \hspace{10pt}
      \includegraphics[scale=0.18]{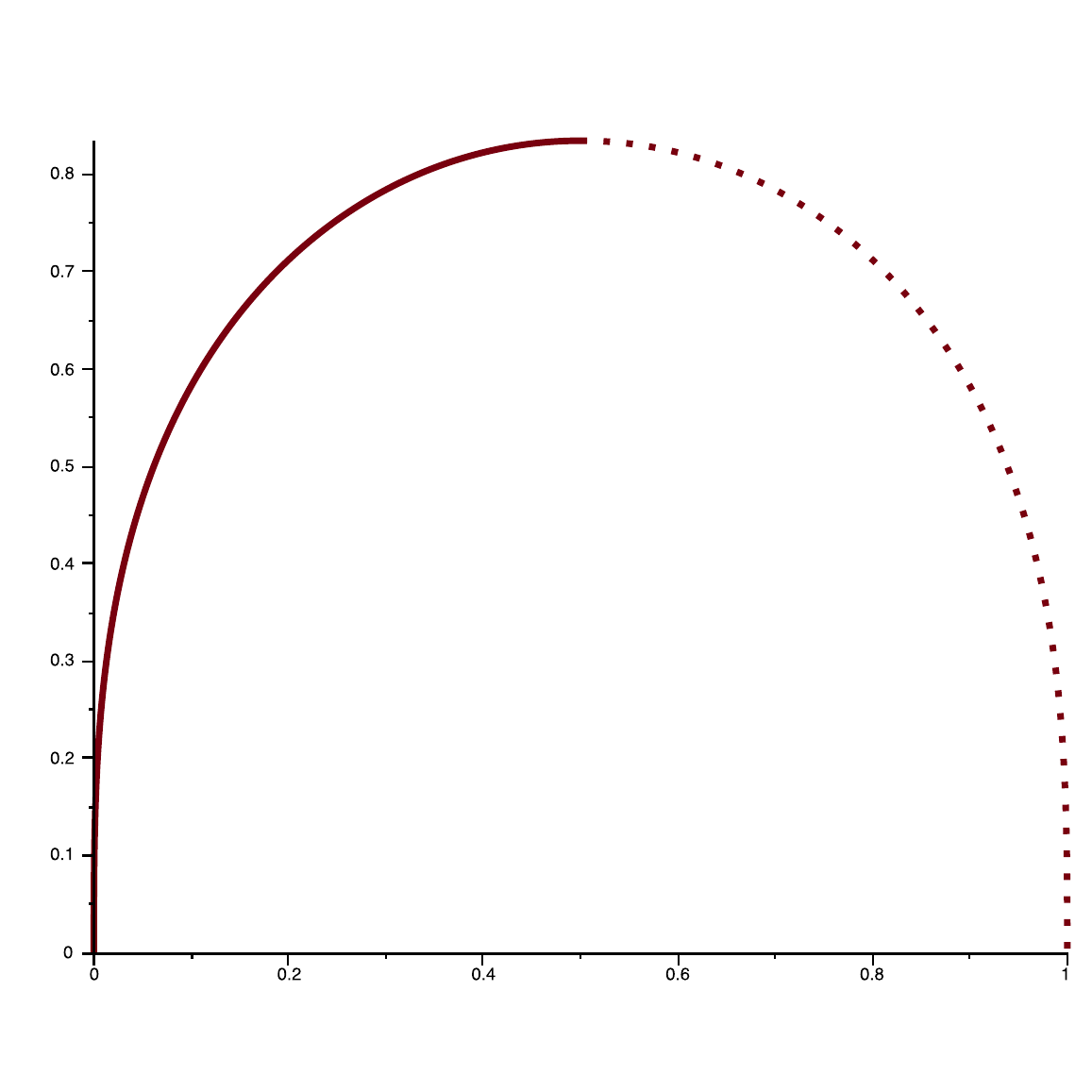}
      \hspace{10pt}
      \includegraphics[scale=0.18]{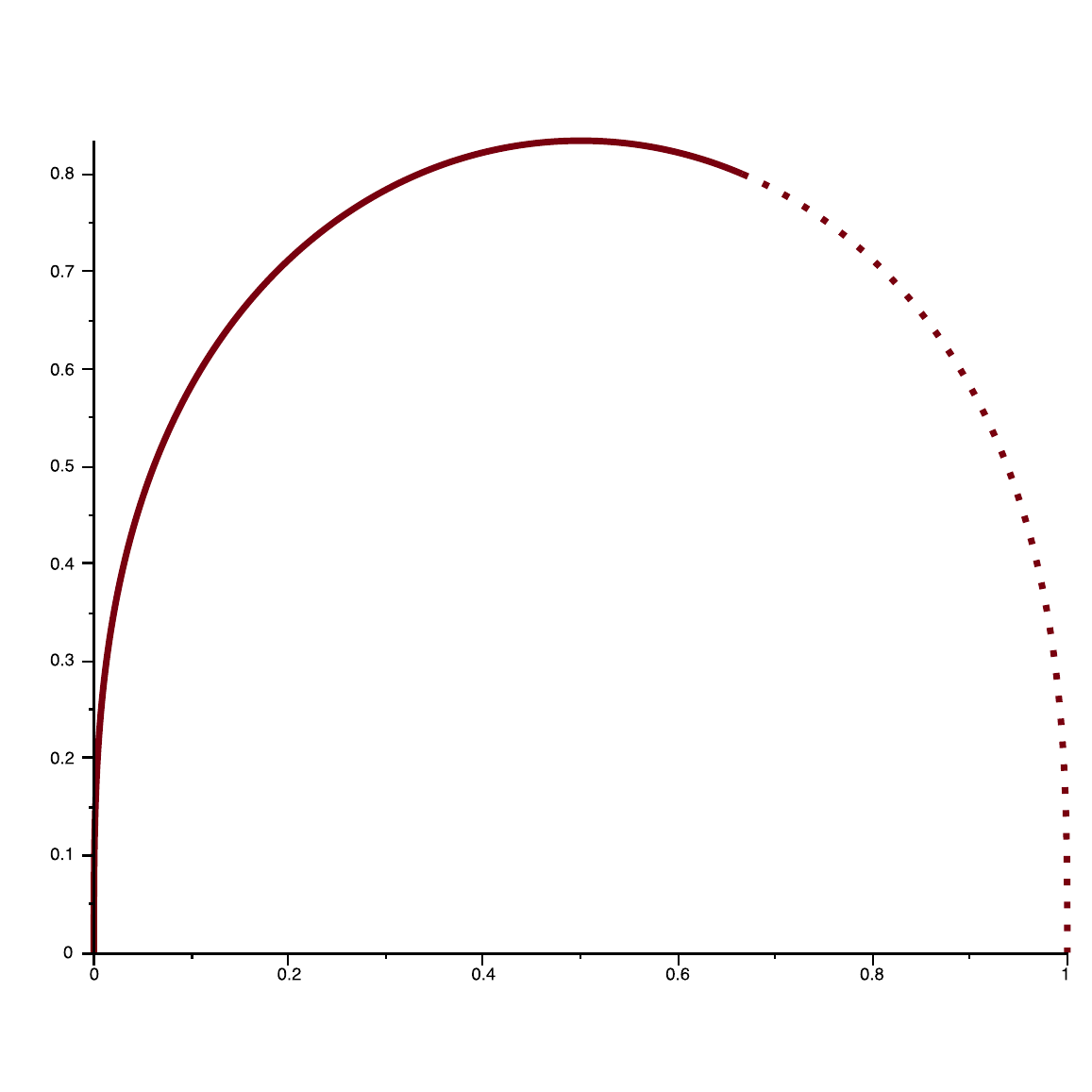}
  \caption{Symmetric cut-and-glued free-elasticae with $\psi(\frac{1}{2})=h_*/2$ (top left), $\psi(\frac{1}{2})=h_*$ (top center), $\psi(\frac{1}{2})=2h_*$ (top right), where $h_*\simeq 0.83463$ (cf.\ \eqref{def:h_*}). The bottom three figures represent rectangular elasticae $\gamma_{\rm rect}$ of each corresponding top figure.}
  \label{fig:SCF}
  \end{figure}
\end{center}
%%%%%%%%%%%%%%%%%%%%%%%%%%%%%% Maple figure ver.1 

\vspace{-\baselineskip}

The key ingredient for the study of this destabilization phenomenon is the discussion of the variational inequality, see Section~\ref{eq:sec4} for details.

The variational inequality we derive can also be used to give an answer to (Q2) --- again optimal $W^{3,\infty}$-regularity can be obtained under the mild assumption that the obstacle is Lipschitz continuous (see Corollary~\ref{cor:optimal_regularity} below). 

Our two remaining main theorems deal with question (Q1), i.e., they study the coincidence set of minimizers of $\mathcal{E}_\lambda$ in $A_{\rm sym}$. Note that for large values of $\lambda$ the length of $L[\gamma]$ plays an important role in the energy $\mathcal{E}_\lambda$. Due to the scaling behavior of the length ($L[r\gamma] = r L[\gamma]$ for any $r > 0$) one would expect minimizers for large $\lambda$ to be as \textit{short as possible} and in particular to touch the obstacle. On contrary, for small $\lambda> 0$ the bending term $B$ dominates the behavior of $\mathcal{E}_\lambda$. Its scaling behavior is different from the length; more precisely $B[r\gamma] = r^{-1} B[\gamma]$ for all $r > 0$. In particular, for small $\lambda > 0$ minimizers are expected to be larger and in particular nontouching. These expectations are confirmed by the following two theorems.

\begin{theorem}[Nontouching symmetric minimizers for small $\lambda > 0$]\label{thm:hitting-escaping} 
%Let $\psi$ be a symmetric cone obstacle and satisfy
%\begin{align}\label{eq:height-nontouching}
%\psi(\tfrac{1}{2}) < h_\lambda,
%\end{align} 
%where $h_\lambda>0$ is a constant given by \eqref{eq:h_lambda}.

%Then 

There exists some $\bar{\lambda}>0$ (depending on $\psi$) such that for all $\lambda \in (0, \bar{\lambda})$ each minimizer of $\mathcal{E}_\lambda$ in $A_{\rm sym}$ does not touch the obstacle, i.e., $I_\gamma = \emptyset$. 
Moreover, in this case the minimizer is unique and given by the critical point $\gamma_{\rm larc}^{\lambda,1,1}$.
\end{theorem}

Here, $\gamma_{\rm larc}^{\lambda,1,1}$ is called \textit{$(\lambda,1,1)$-longer arc}, and it will be discussed in more detail in Section~\ref{sec:penalizedpinnedelasticae} (see Figure~\ref{fig:larc_3patterns}). 
The constant $\bar{\lambda}$ has to be chosen at least so small that 
\begin{align}\label{eq:height-nontouching}
\psi(\tfrac{1}{2}) < h_\lambda,
\end{align} 
where $h_\lambda>0$ is given by \eqref{eq:h_lambda} (which tends to $\infty$ as $\lambda \rightarrow 0$). This ensures admissibilty of $\gamma_{\rm larc}^{\lambda,1,1}.$

%%%%%%%%%%%%%%%%%%%%%%%%%%%%%% Maple figure ver.1 
\begin{center}
    \begin{figure}[htbp]
      \includegraphics[scale=0.18]{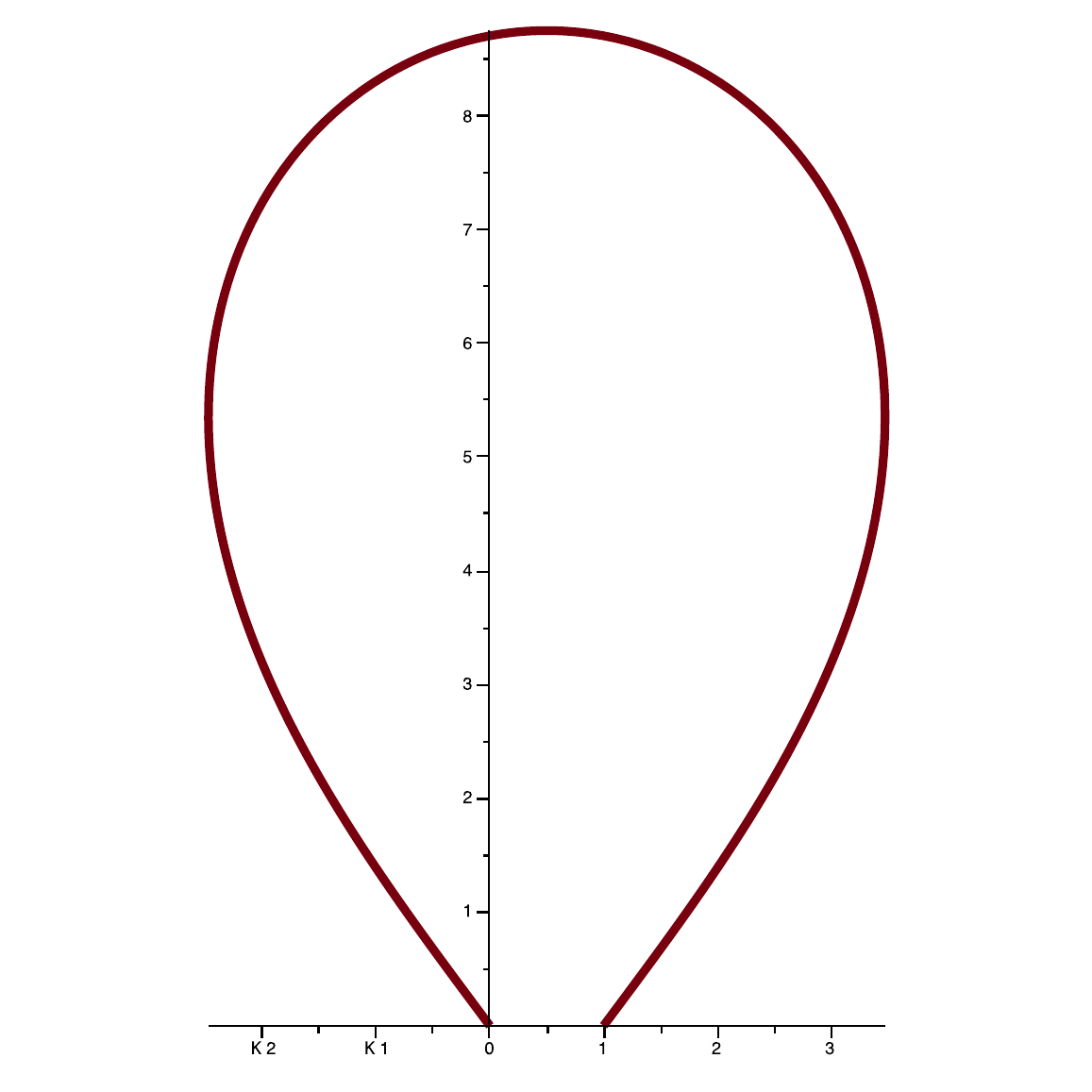}
  %\end{minipage}
  %\begin{minipage}{0.45\textwidth}
      \includegraphics[scale=0.18]{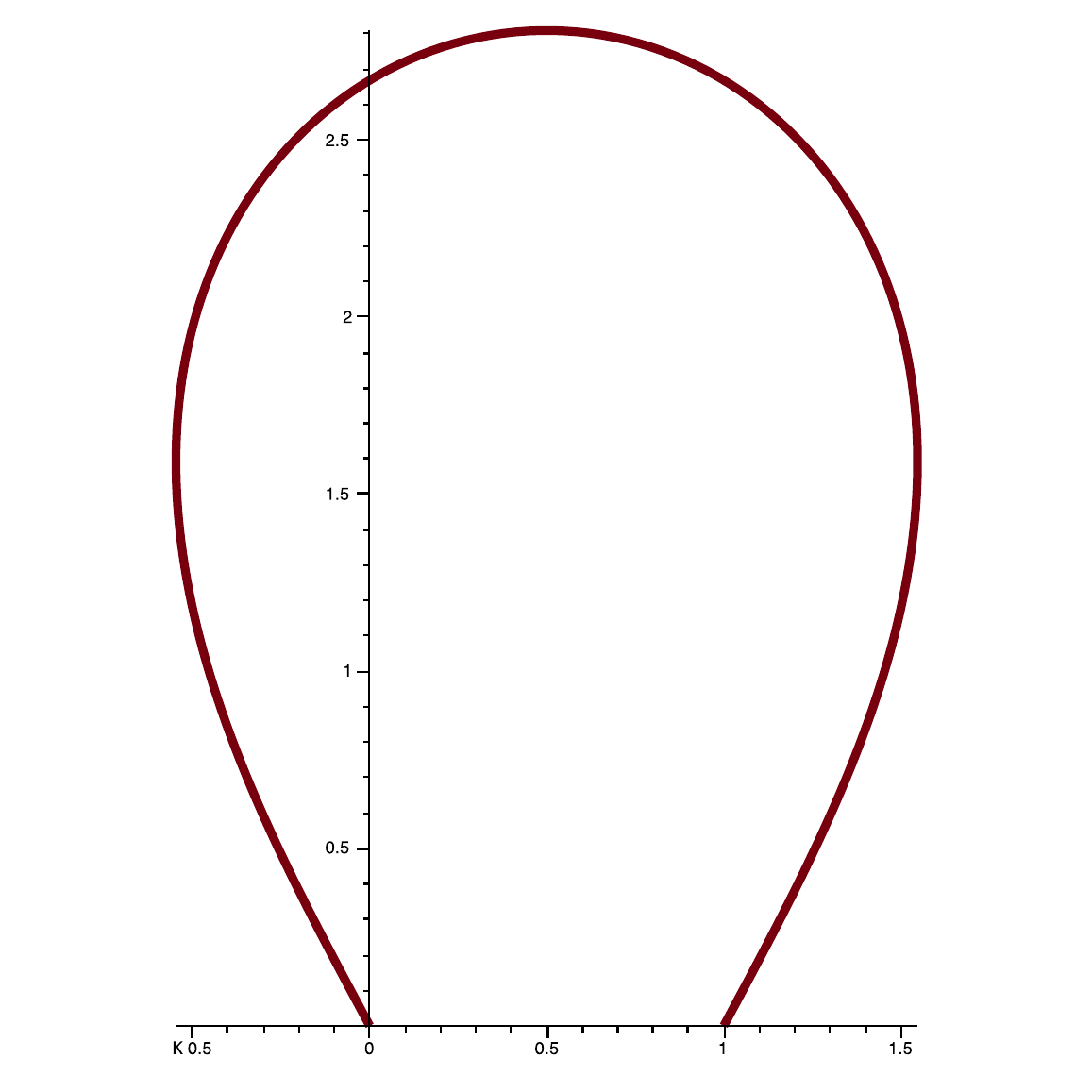}
  %\end{minipage}
    %\begin{minipage}{0.45\textwidth}
      \includegraphics[scale=0.18]{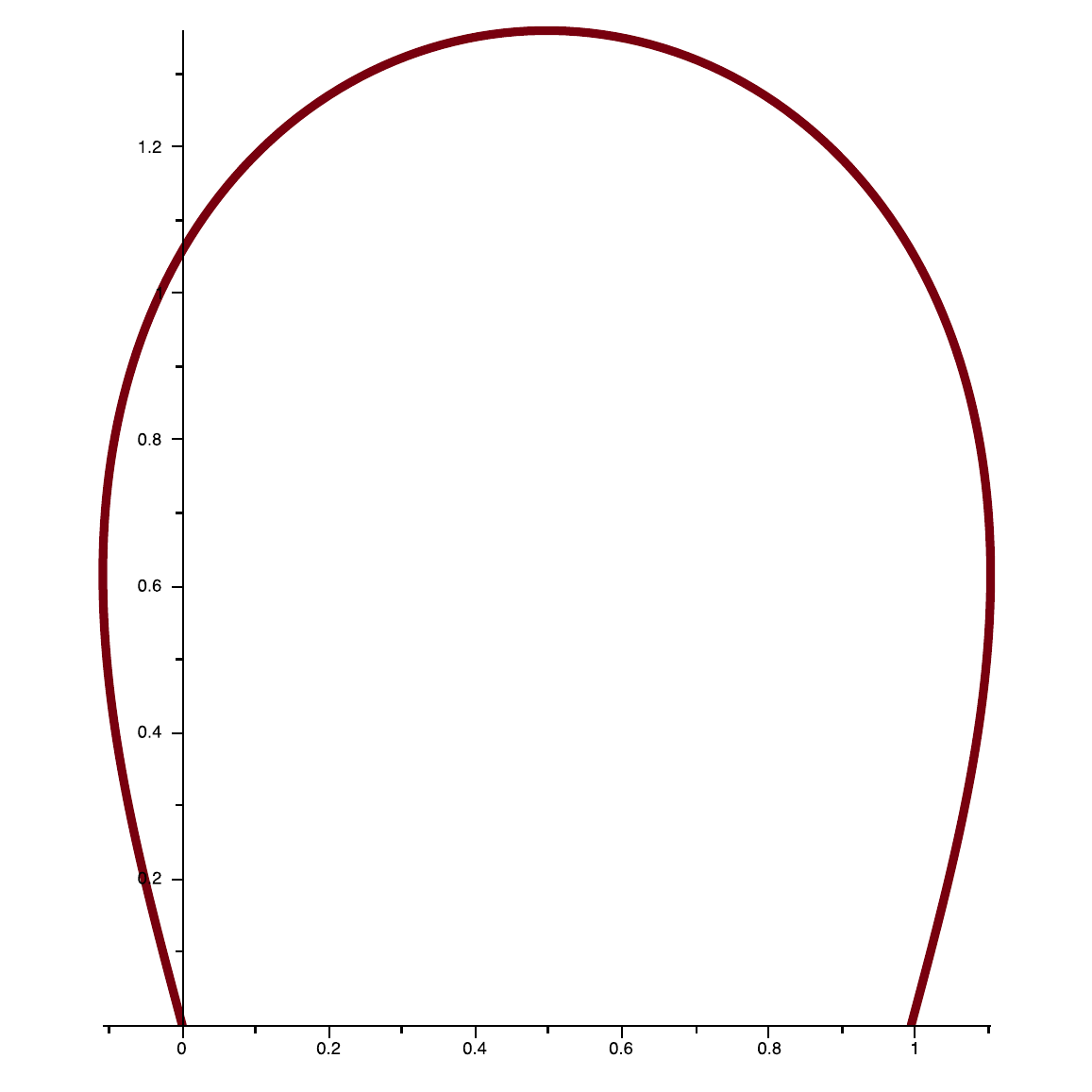}
  %\end{minipage}
  \caption{$(\lambda,1,1)$-Longer arcs ($\gamma_{\rm larc}^{\lambda,1,1}$) with $\lambda={1}/{20}$, $\lambda=\hat{\lambda}/2\simeq 0.35355$, $\lambda=\hat{\lambda}\simeq0.70710$ (from left to right), after rescaling.}
  \label{fig:larc_3patterns}
  \end{figure}
\end{center}
%%%%%%%%%%%%%%%%%%%%%%%%%%%%%% Maple figure ver.1 

\vspace{-\baselineskip}

\begin{theorem}[Touching minimizers for large $\lambda$]\label{thm:touching_large_lambda}

There exists $\hat{\lambda}>0$ such that for any $\lambda>\hat{\lambda}$ and any obstacle $\psi:\mathbf{R}\to\mathbf{R}$ satisfying (A1)--(A3),
each minimizer $\gamma$ of $\mathcal{E}_\lambda$ in $A$ touches $\psi$, i.e., $I_\gamma \neq \emptyset$. 
\end{theorem}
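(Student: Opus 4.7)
The plan is to argue by contradiction: suppose that for some $\lambda > \hat\lambda$ (to be isolated) and some obstacle $\psi$ satisfying (A1)--(A3) there exists a minimizer $\gamma\in A$ with $I_\gamma = \emptyset$, and rule this out by comparison against an explicit admissible touching competitor.

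First I would exploit that non-touching is an open condition: by continuity of $\psi$ the gap $\delta := \min_{x\in[0,1]}(\gamma_2(x)-\psi(\gamma_1(x)))$ is strictly positive, so every small $W^{2,2}$-perturbation of $\gamma$ stays in $A$. Hence $\gamma$ is an unconstrained minimizer of $\mathcal{E}_\lambda$ among pinned immersed $W^{2,2}$-curves from $(0,0)$ to $(1,0)$, and therefore lies in the explicit family of $\lambda$-pinned elasticae classified in Section~\ref{sec:penalizedpinnedelasticae}. Next, I would extract a universal length lower bound from admissibility: letting $x_*\in[0,1]$ attain $M := \max_{[0,1]}\psi > 0$ (positive by (A3)), the intermediate value theorem applied to the continuous function $\gamma_1$ (with $\gamma_1(0)=0,\,\gamma_1(1)=1$) yields some $t_*$ with $\gamma_1(t_*)=x_*$, and admissibility combined with non-touching forces $\gamma_2(t_*)>M$. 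The triangle inequality then furnishes
\begin{equation*}
  L[\gamma] \;\geq\; L_M \;:=\; \sqrt{x_*^2 + M^2} + \sqrt{(1-x_*)^2 + M^2},
\end{equation*}
so that $\mathcal{E}_\lambda[\gamma] \geq \lambda L_M$. For the upper bound, I would construct, for each $\psi$, an admissible $W^{2,2}$-immersed competitor $\gamma_*$ touching $\psi$ (e.g.\ passing through $(x_*,M)$) by gluing two smooth arcs from the endpoints to $(x_*,M)$ meeting tangentially at the peak, with a smoothing scale chosen as a function of $\lambda$ so as to balance length overshoot against bending cost. One then aims for an estimate of the form $\mathcal{E}_\lambda[\gamma_*] \leq \lambda L_M + o(\lambda)$ (indeed, $O(\sqrt\lambda)$) uniformly in $\psi$; by minimality this bound transfers to $\gamma$.

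The decisive step is then to sharpen $\mathcal{E}_\lambda[\gamma] \geq \lambda L_M$ enough to contradict the competitor estimate for $\lambda$ large. Admissibility and (A3) rule out $\gamma$ being the straight segment, so $\gamma$ must be a non-trivial pinned $\lambda$-elastica; using the first integral $|\partial_s\kappa|^2 + \tfrac14\kappa^4 - \tfrac\lambda2\kappa^2 = E$ of its Euler--Lagrange equation, together with the parameterization by Jacobi elliptic functions from Section~\ref{sec:penalizedpinnedelasticae}, one should extract a quantitative excess $\mathcal{E}_\lambda[\gamma]-\lambda L_M \geq c(\lambda)$ that eventually outgrows the $o(\lambda)$ overshoot of the competitor. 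The threshold $\hat\lambda$ is then defined as the smallest $\lambda$ above which this comparison is decisive, and I expect it to coincide with the parameter visible in Figure~\ref{fig:larc_3patterns} at which the $(\lambda,1,1)$-longer arc of Theorem~\ref{thm:hitting-escaping} ceases to provide a viable short pinned elastica. The hard part will be the uniform-in-$M$ character of this lower bound: the obstacle height $M$ is unbounded over the admissible class, so the argument must combine the pinned-elastica classification with the admissibility constraint $\max\gamma_2 > M$ in a way that is insensitive to $M$ and $\psi$, rather than relying on a naive length estimate which leaves no gap.
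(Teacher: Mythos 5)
Your opening move coincides with the paper's: if $I_\gamma=\emptyset$ then $\delta:=\min_{[0,1]}(\gamma^2-\psi\circ\gamma^1)>0$, so $\gamma$ is an unconstrained (local) minimizer in $A_{\rm pin}$ and hence one of the explicitly classified penalized pinned elasticae. After that the proposal diverges and has two genuine gaps. First, the competitor estimate is not available: the two chords from $(0,0)$ and $(1,0)$ to the peak $(x_*,M)$ need not lie above $\psi$, since (A1)--(A3) control only the maximum of $\psi$ and its sign outside $[0,1]$; the obstacle can stay close to height $M$ over a long stretch, forcing every admissible curve to be much longer than $L_M$. Hence the bound $\inf_A\mathcal{E}_\lambda\leq\lambda L_M+O(\sqrt{\lambda})$ is false in general, and replacing $L_M$ by the taut-string length reintroduces a dependence on $\psi$ that destroys the uniformity you need. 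Second, and more seriously, the ``decisive step'' --- a quantitative excess $\mathcal{E}_\lambda[\gamma]-\lambda L_M\geq c(\lambda)$ valid for every nontrivial pinned elastica reaching height $>M$, uniformly in $M$ and $\psi$ --- is essentially the content of the theorem and is left unproved; you flag it as ``the hard part'' but provide no mechanism. The first integral of the elastica equation does not obviously produce such an excess, and the authors explicitly caution that pure energy comparisons cannot settle the coincidence question here.

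The paper closes the argument by an entirely different mechanism: stability rather than energy comparison. For $\lambda>\hat{\lambda}$ one has $n_\lambda\geq2$, and the stability analysis of \cite{MYarXiv2409}*{Theorems 3.6--3.8} shows that every nontrivial penalized pinned elastica (all $n$-fold arcs with $n\geq2$ and all loops) fails to be a local minimizer in $A_{\rm pin}$; the line segment is excluded by (A3). Since non-touching is an open condition, an energy-decreasing $H^2$-perturbation in $A_{\rm pin}$ is uniformly close to $\gamma$ and therefore still admissible in $A$, contradicting minimality --- no competitor construction or lower bound on the infimum is needed. Your identification of $\hat{\lambda}$ as the parameter at which the $(\lambda,1,1)$-longer arc ceases to exist is correct in spirit (it is exactly where $n_\lambda$ jumps to $2$), but to make it bear on the proof you must import the instability results, not an energy comparison.
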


Interestingly, the value $\hat{\lambda}\simeq 0.70107$ is known explicitly and can be chosen independently of $\psi$. This is not the case for $\bar{\lambda}$ in Theorem \ref{thm:hitting-escaping}. It is an interesting question for future research,  which conditions on the obstacle $\psi$  admit the choice $\bar{\lambda} = \hat{\lambda}$ in Theorem \ref{thm:hitting-escaping}. 

While the results seem likely due to the scaling behavior of $B$ and $L$, the absence of a maximum principle makes it nonstandard to classify the coincidence set. For the minimization in $A_{\rm graph}$, a maximum principle for the quantity $v = \frac{u''}{(1+u'^2)^\frac{5}{4}}$ can be used to derive nonemptyness of the coincidence set. We refer to \cite{DD18} for a detailed explanation. Our results are not at all relying on any maximum principle but more on the explicit knowledge of critical curves of $\mathcal{E}_\lambda$ in $A$. These are called \textit{penalized pinned elasticae} and have been studied in \cite{MYarXiv2409}, which forms the basis of our approach.  In particular the stability analysis of critical points in \cite{MYarXiv2409} becomes relevant in the proof of Theorem \ref{thm:touching_large_lambda}. We remark that each minimizer $\gamma$ of $\mathcal{E}_\lambda$ in $A$ must be a critical curve of $\mathcal{E}_\lambda$ on the noncoincidence set $[0,1] \setminus I_{\gamma}$. 
However, touching candidates with $I_{\gamma} \neq \emptyset$ are not necessarily globally critical points of $\mathcal{E}_\lambda$ and hence further analysis is required, in particular for ruling out touching candidates in the proof of Theorem~\ref{thm:hitting-escaping}.

Oftentimes, configurations of minimizers can be ruled out by means of \textit{energy arguments}, i.e., by showing that the investigated configuration requires an energy lying above the energy infimum. See e.g.\ \cite[Section 7]{DMN18} for an argument that rules out embedded minimizers just by looking at the energy of embedded competitors. However, for small $\lambda> 0$ the touching phenomenon can not be ruled out purely by looking at the energy of touching curves: recall that as mentioned earlier,  we may construct a family of curves $\{C_\lambda\}_{\lambda>0}$ that touch a symmetric cone obstacle for any $\lambda > 0$ but $\mathcal{E}_\lambda[C_\lambda]\to 0$ as $\lambda\to0$. 

To achieve the required compactness %this
we will look at %another
an auxiliary
minimization problem of $\mathcal{E}_\lambda$ subject to a \textit{rhomb obstacle constraint}, i.e. we minimize $\mathcal{E}_\lambda$ in
\[
A_{\rm sym}^\lozenge:= 
\Set{ \gamma \in W^{2,2}_{\rm imm}(0,1; \mathbf{R}^2) |
\begin{array}{l}
 \gamma(0)=(0,0), \ \gamma(1)= (1,0),
 \;  \gamma(1-x) = (1-\gamma^1(x),\gamma^2(x)), \\ 
 |\gamma^2(x)| \geq \psi(\gamma^1(x))) \text{ in } [0,1]
\end{array}
}.
\]
The obstacle is then to be understood as the region between the graphs of $\psi$ and $-\psi$  (which forms a rhomb if $\psi$ is affine linear). The compactness of this rhomb will play a decisive role in the discussion of the coincidence set of minimizers. 

The article is organized as follows. In Section~\ref{sec:penalizedPinnedElasticae} we collect some useful facts about critical points of $\mathcal{E}_\lambda$. In Section~\ref{sect:3} we discuss existence and regularity of minimizers for the problem we study. Section~\ref{eq:sec4} deals with the question of local minimality of symmetric cut-and-glued free elasticae in the case of $\lambda = 0$. Section~\ref{sect:Nontouching_small_lambda} deals with the noncoincidence of minimizers for small $\lambda > 0$ and Section \ref{sect:6} is dedicated to the coincidence phenomenon for large values of $\lambda$.    

\subsection*{Acknowledgments}
Much of this work was done during the second author’s visit at Freiburg University. 
The second author is very thankful to the first author for his warm hospitality and providing a fantastic research atmosphere.
The second author is supported by FMfI Excellent Poster Award 2022 and JSPS KAKENHI Grant Number 24K16951.

\section{Preliminaries}\label{sec:penalizedPinnedElasticae}

\subsection*{Notation}
Throughout this paper let $\langle \cdot, \cdot\rangle$ denote the Euclidean inner product in $\mathbf{R}^2$. In addition, let $e_1=(1,0)$ and $e_2=(0,1)$ be the canonical basis vectors of $\mathbf{R}^2$. Further, % and
$R_\phi$ stands for the counterclockwise rotation matrix \textrm{with} %through
angle $\phi\in \mathbf{R}$.
For an immersed curve $\gamma:[0,1]\to\mathbf{R}^2$, let $\partial_s$ denote the arclength derivative along $\gamma$, i.e., $\partial_s f=\frac{1}{|\gamma'|}f'$. 
Let $\boldsymbol{t}:= \partial_s \gamma$ denote the unit tangent, $\boldsymbol{n}:= R_{{\pi}/{2}}\partial_s \gamma$ denote the unit normal, $\boldsymbol{\kappa}:= \partial_{s}^2 \gamma$ denote the curvature vector, and $k:= \langle \partial_{s}^2 \gamma, \boldsymbol{n}\rangle$ denote the signed curvature.
For $0\leq a<b <2\pi$ we say $x\in [a,b]$ (mod $2\pi$) if $x \in \bigcup_{k \in \mathbf{Z}} [2k\pi+a,2k\pi+b]$.

We also introduce the following notation of a simple concatenation of curves: For $\gamma_j:[a_j, b_j]\to \mathbf{R}^2$ with $L_j:=b_j-a_j \geq0$, we define $\gamma_1\oplus \gamma_2:[0,L_1+L_2]\to \mathbf{R}^2$ by 
\begin{align}\label{eq:def-oplus}
\left( \gamma_1 \oplus \gamma_2 \right)(s):=
\begin{cases}
\gamma_1(s+a_1), \quad &s\in[0,L_1], \\
\gamma_2(s+a_2-L_1)+\gamma_1(b_1)-\gamma_2(a_2), \quad &s\in[L_1,L_1+L_2],
\end{cases}
\end{align}
and for $N\geq3$, inductively define $\gamma_1 \oplus \cdots \oplus \gamma_N := (\gamma_1 \oplus \cdots \oplus\gamma_{N-1})\oplus \gamma_N$.

For $\gamma \in A$, we define a \textit{non-coincidence set} and a \textit{coincidence set} by 
\[
\widetilde{N}_\gamma:= \big\{ s\in (0,L) \,|\, \tilde{\gamma}_2(s) > \psi(\tilde{\gamma}_1(s)) \big\}
\quad\text{and}\quad \widetilde{I}_\gamma:=(0,L)\setminus \widetilde{N}_\gamma, 
\]
respectively, where $L:=L[\gamma]$ and $\tilde{\gamma}$ denotes the arclength parametrization of $\gamma$.
Also define the \textit{coincidence set} $I_\gamma$ by the coincidence set with the original parameter, i.e., $I_\gamma:=\{x\in(0,1) \,|\, \gamma^2(x)=\psi(\gamma^1(x))\}$.

Next, we introduce some special elasticae, which play a role throughout the article.

\subsection{Penalized pinned elasticae}\label{sec:penalizedpinnedelasticae} 
Let $A_{\rm pin}$ be the set of immersed curves whose endpoints are fixed as follows: 
\[
A_{\rm pin}:= 
\Set{ \gamma \in W^{2,2}_{\rm imm}(0,1;\mathbf{R}^2) | \gamma(0)=(0,0), \ \ \gamma(1)=(1,0)  }. 
\]
%%%%%%%%%%%%%%%%%%%%%%%%%%%%%%%%%%%%%%
\begin{definition}
Let $\lambda\geq0$.
We call %$\gamma$ 
$\gamma \in A_{\rm pin} \cap C^\infty$ a \textit{penalized pinned elastica} if the signed curvature $k:[0,L]\to\mathbf{R}$ of  the arclength parametrization of $\gamma$ satisfies $k(0)=k(L)=0$ and $2k''+k^3-\lambda k=0$ \ in \ $(0,L)$.
\end{definition}
%%%%%%%%%%%%%%%%%%%%%%%%%%%%%%%%%%%%%%

The equation $2k''+k^3-\lambda k=0$ is the Euler--Lagrange equation for the length-penalized functional $\mathcal{E}_\lambda$. For $\lambda= 0$ it is equivalent to the elastica equation given in the introduction. This is readily checked using that $\boldsymbol{\kappa}$ is parallel to $\boldsymbol{n}$.
 Complete classification and all the explicit formulae of penalized pinned elasticae are already obtained (cf.\ \cite{MYarXiv2409}). We recall them here for the reader's convenience.
To this end, we 
introduce the functions $f:[\frac{1}{\sqrt{2}},1)\to\mathbf{R}$ and $g:[\frac{1}{\sqrt{2}},1)\to\mathbf{R}$ defined by 
\begin{align}
f(q)&:= (4q^4-5q^2+1)\mathrm{K}(q) + (-8q^4+8q^2-1)\mathrm{E}(q), \label{eq:def-f} 
\\
g(q)&:=8\big( 2\mathrm{E}(q)-\mathrm{K}(q) \big)^2 (2q^2-1), \label{eq:def-g} 
\end{align}
where $\mathrm{K}(q)$ and $\mathrm{E}(q)$ denote the complete elliptic integrals (cf.\ Appendix~\ref{sect:elliptic_functions}).
It is shown in \cite[Lemma 2.3]{MYarXiv2409} that $f$ has a unique root $\hat{q}\simeq 0.79257$.
Using this value $\hat{q}$ we define
\begin{align}\label{eq:def-hat_lambda}
\hat{\lambda}:=g(\hat{q})\simeq 0.70107. % 0.7010734705
\end{align}
It is shown in \cite{MYarXiv2409}*{Lemma 2.4} that $g$ is strictly increasing in $(\frac{1}{\sqrt{2}},\hat{q}] \cup [q_*,1)$ and strictly decreasing in $(\hat{q},q_*)$, where $q_*\simeq 0.90891$ denotes the unique zero of $q\mapsto 2\mathrm{E}(q)-\mathrm{K}(q)$ (see also Lemma~\ref{lem:elliptic_2E-K}). %$g(\hat{q})$ is a unique local maximum and $g(q_*)=0$ is a local minimum, and $g$ is strictly monotone in $(\frac{1}{\sqrt{2}},\hat{q}]$, $[\hat{q},q_*]$ and $[q_*,1)$.
% We also
Define 
\begin{itemize}
    \item For $c\in(0,\hat{\lambda}]$, let $q_1(c), q_2(c), q_3(c)$ be the solutions to $g(q)=c$ with 
    \begin{align}\label{eq:mod_PPE}
    q_1(c)\in(\tfrac{1}{\sqrt{2}},\hat{q}], \quad q_2(c) \in [\hat{q}, q_*), \quad q_3(c)\in(q_*,1).
    \end{align}
    We interpret $q_1(\hat{\lambda})=q_2(\hat{\lambda})=\hat{q}$.
    \item For $c>\hat{\lambda}$, let $q_3(c)\in(q_*,1)$ be a unique solution to $g(q)=c$.
\end{itemize}

These parameters
%moduli 
yield the following complete classification of penalized pinned elasticae.

%%%%%%%%%%%%%%%%%%%%%%%%%%%%%%%%%%%%%%
\begin{proposition}[{\cite[Theorem 1.1]{MYarXiv2409}}]\label{prop:PPE-classification}
Let $\lambda>0$ and 
\begin{align}\label{eq:def-n_lam}
n_\lambda:=\bigg\lceil \sqrt{\tfrac{\lambda}{\hat{\lambda}}}\bigg\rceil.
\end{align}
Suppose that $\gamma \in A_{\rm pin}$ is a penalized pinned elastica.
Then, either $\gamma$ is a trivial line segment; or %otherwise,
up to reflection and reparametrization, $\gamma$ is represented by one of the following curves. 
\begin{itemize}
    \item (Case I: $(\lambda,1,n)$-shorter arc) For $q=q_1(\frac{\lambda}{n^2})$ and $\alpha=2n(2\mathrm{E}(q)-\mathrm{K}(q))$, 
    \begin{align}\label{eq:sarc-formula}
        \gamma_{\rm sarc}^{\lambda,1,n}(s)&:= \frac{1}{\alpha}
        \begin{pmatrix}
        2\mathrm{E}(\am(\alpha s-\mathrm{K}(q),q),q) + 2\mathrm{E}(q) - \alpha s \\
        2q\cn(\alpha s-\mathrm{K}(q),q)
        \end{pmatrix} \qquad (s \in [0,2n \mathrm{K}(q)/\alpha])
    \end{align}
    for some $n \in \mathbf{N}$ such that $n\geq n_\lambda$. 
    \item (Case II: $(\lambda,1,n)$-longer arc) For $q=q_2(\frac{\lambda}{n^2})$ and $\alpha=2n(2\mathrm{E}(q)-\mathrm{K}(q))$, 
    \begin{align}\label{eq:gamma_larc-PPE}
        \gamma_{\rm larc}^{\lambda,1,n}(s)&:= \frac{1}{\alpha}
        \begin{pmatrix}
        2\mathrm{E}(\am(\alpha s-\mathrm{K}(q),q),q) + 2\mathrm{E}(q) - \alpha s \\
        2q\cn(\alpha s-\mathrm{K}(q),q)
        \end{pmatrix}  \qquad (s \in [0,2n \mathrm{K}(q)/\alpha])
    \end{align}
    for some $n \in \mathbf{N}$ such that $n\geq n_\lambda$. 
    \item (Case III: $(\lambda,1,n)$-loop) For $q=q_3(\frac{\lambda}{n^2})$ and $\alpha=2n(-2\mathrm{E}(q)+\mathrm{K}(q))$, 
    \begin{align}
        \gamma_{\rm loop}^{\lambda,1,n}(s)&:= \frac{1}{\alpha}
        \begin{pmatrix}
        -2\mathrm{E}(\am(\alpha s-\mathrm{K}(q),q),q) - 2\mathrm{E}(q) + \alpha s \\
        2q\cn(\alpha s-\mathrm{K}(q),q)
        \end{pmatrix} \qquad (s \in [0,2n \mathrm{K}(q)/\alpha])
    \end{align}
    for some $n \in \mathbf{N}$ such that $n \geq 1$. 
\end{itemize}
\end{proposition}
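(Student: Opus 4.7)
The plan is to follow the standard elastica integration procedure, then impose the pinning constraint carefully, and finally classify the resulting branches by modulus.

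First, I would observe that the Euler--Lagrange ODE $2k''+k^3-\lambda k=0$ admits a first integral. Multiplying by $k'$ and integrating yields
\begin{equation*}
(k')^2 = E_0 + \tfrac{\lambda}{2}k^2 - \tfrac{1}{4}k^4
\end{equation*}
for some $E_0\in\mathbf{R}$. The boundary condition $k(0)=0$ forces $E_0=(k'(0))^2\geq 0$. The case $E_0=0$ with $k\not\equiv 0$ leads to a curvature profile that never returns to zero on a bounded interval except at the initial point, so imposing also $k(L)=0$ gives $k\equiv 0$, i.e.\ the trivial line segment. From now on assume $E_0>0$. The quartic polynomial on the right-hand side has two positive real roots and two negative real roots symmetric about zero, and the motion of $k$ is periodic between one positive maximum and its negative; writing these extrema in terms of a modulus $q\in(0,1)$ and an amplitude $\alpha>0$, one obtains the three standard elastica solutions (wavelike, borderline, orbitlike), which can be expressed via the Jacobi elliptic functions $\cn$ and $\dn$. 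A careful normalization (using that $k$ starts at a zero) gives a half-period of length $\mathrm{K}(q)/\alpha$, and hence on $[0,L]$ there must be an integer number $n\in\mathbf{N}$ of half-periods, so that $L=2n\mathrm{K}(q)/\alpha$.

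Next I would translate this curvature information into the curve $\gamma$ itself by integrating $\gamma(s)=\int_0^s (\cos\theta,\sin\theta)\,d\sigma$ where $\theta'=k$. Using standard identities for the Jacobi amplitude and the incomplete elliptic integral of the second kind $\mathrm{E}(\am(\cdot,q),q)$, one obtains precisely the closed-form expressions in \eqref{eq:sarc-formula}, \eqref{eq:gamma_larc-PPE}, and the loop case. The boundary constraint $\gamma(L)=(1,0)$ amounts to two conditions: the vertical component is automatic by symmetry (the $\cn$-function has zero mean over a full period), and the horizontal component yields, after substituting $L=2n\mathrm{K}(q)/\alpha$ and $\alpha=2n|2\mathrm{E}(q)-\mathrm{K}(q)|$, the scalar equation
\begin{equation*}
 g(q) = \tfrac{\lambda}{n^2},
\end{equation*}
with $g$ as in \eqref{eq:def-g}. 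The sign of $2\mathrm{E}(q)-\mathrm{K}(q)$ distinguishes arclike solutions ($q<q_*$, corresponding to Cases I, II) from looplike solutions ($q>q_*$, Case III), and the sign convention in the definition of $\alpha$ accounts for the flipped $x$-coordinate in the loop formula.

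Finally I would read off the classification from the monotonicity properties of $g$ recalled after \eqref{eq:def-hat_lambda}. For fixed $n$, the equation $g(q)=\lambda/n^2$ is solvable only when $\lambda/n^2\leq\hat{\lambda}$ on the arclike branches $q\in(\tfrac{1}{\sqrt{2}},q_*)$, which rewrites as $n\geq\sqrt{\lambda/\hat{\lambda}}$, forcing $n\geq n_\lambda$ as in \eqref{eq:def-n_lam}; in the two monotone pieces $(\tfrac{1}{\sqrt{2}},\hat{q}]$ and $[\hat{q},q_*)$ the solutions are unique and give respectively the shorter arc $\gamma_{\rm sarc}^{\lambda,1,n}$ with modulus $q_1(\lambda/n^2)$ and the longer arc $\gamma_{\rm larc}^{\lambda,1,n}$ with modulus $q_2(\lambda/n^2)$. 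On the looplike branch $g$ is a bijection from $(q_*,1)$ onto $(0,\infty)$, giving a unique solution $q_3(\lambda/n^2)$ for every $n\in\mathbf{N}$, and producing $\gamma_{\rm loop}^{\lambda,1,n}$.

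The main obstacle is the endpoint computation: rewriting the $x$-component of $\int_0^s(\cos\theta,\sin\theta)\,d\sigma$ in a form that cleanly isolates $g(q)$ requires manipulating $\int \cn^2$, $\int \dn^2$ and the relation $\theta'=k$ into the combination $2\mathrm{E}(q)-\mathrm{K}(q)$, and then ruling out parasitic branches (for instance, arclike solutions with $q$ outside $(\tfrac{1}{\sqrt{2}},q_*)$ do not close up in integer multiples of the half-period without violating the pinning). Verifying that no further symmetries produce genuinely new critical curves beyond reflection and reparametrization also takes some care, but follows from the uniqueness of the Cauchy problem for the ODE $2k''+k^3-\lambda k=0$ with prescribed $k(0)=0$ and $(k'(0))^2=E_0$, which fixes $k$ up to sign.
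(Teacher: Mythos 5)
The paper offers no proof of this proposition: it is imported verbatim from \cite{MYarXiv2409}*{Theorem 1.1}, so the only internal point of comparison is the Appendix~\ref{sect:appendixB} argument for Proposition~\ref{prop:fig8_minimizes}, which runs exactly along the lines you propose (first integral, the $\cn$-form of the curvature via \cite{Lin96}, the quantization $L=2n\mathrm{K}(q)/\alpha$, and the chord computation $|\tilde{\gamma}^1(L)-\tilde{\gamma}^1(0)|=\tfrac{2n}{\alpha}|2\mathrm{E}(q)-\mathrm{K}(q)|$). Your skeleton --- first integral, wavelike curvature, the scalar equation $g(q)=\lambda/n^2$ from the pinning constraint, then the monotonicity of $g$ to extract $q_1,q_2,q_3$ and the lower bound $n\geq n_\lambda$ --- is the standard and correct route, and it is the one the cited source follows.

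Several intermediate claims as written are inaccurate, though none derails the plan. (i) For $E_0>0$ and $\lambda>0$ the quartic $E_0+\tfrac{\lambda}{2}k^2-\tfrac14k^4$ has exactly two real roots $\pm k_{\max}$ (the roots in $k^2$ are $\lambda\pm\sqrt{\lambda^2+4E_0}$, one of which is negative), not four; correspondingly, with $k(0)=0$ only the wavelike branch can occur, since orbitlike solutions require $E_0<0$ and borderline ones have nonvanishing curvature, so invoking ``the three standard elastica solutions'' at this stage is a red herring. (ii) Consecutive zeros of $\cn(\cdot,q)$ are $2\mathrm{K}(q)$ apart, so the relevant half-period in arclength is $2\mathrm{K}(q)/\alpha$; your ``half-period $\mathrm{K}(q)/\alpha$'' is off by a factor two, although your final formula $L=2n\mathrm{K}(q)/\alpha$ is the correct one. (iii) The restriction to $q\in(\tfrac{1}{\sqrt{2}},q_*)$ on the arc branches is not a ``closing-up'' obstruction: wavelike solutions with $q\leq\tfrac{1}{\sqrt{2}}$ close up perfectly well but correspond to $\lambda=2\alpha^2(2q^2-1)\leq0$, so it is the sign of $\lambda$ that forces $q>\tfrac{1}{\sqrt{2}}$, while the arc/loop dichotomy at $q_*$ is governed by the sign of $2\mathrm{E}(q)-\mathrm{K}(q)$ in the horizontal displacement. (iv) The vertical closure is not a mean-zero property of $\cn$: the second coordinate is itself proportional to $\cn(\alpha s-\mathrm{K}(q),q)$ (equivalently to $k$), and it vanishes at both endpoints because these are inflection points. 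With these points repaired, your outline matches the proof of the cited theorem.
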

%%%%%%%%%%%%%%%%%%%%%%%%%%%%%%%%%%%%%%

From these explicit formulae, we can deduce the maximal height of $\gamma^{\lambda,1,1}_{\rm larc}$. Indeed,
\begin{align}\label{eq:h_lambda}
\|(\gamma_{\rm larc}^{\lambda,1,1})^2\|_{L^\infty(0,L)}=(\gamma_{\rm larc}^{\lambda,1,1})^2(\tfrac{L}{2})=\frac{q_2(\lambda)}{2\mathrm{E}(q_2(\lambda)) - \mathrm{K}(q_2(\lambda))}=:h_\lambda,
\end{align}
where $(\gamma^{\lambda,1,1}_{\rm larc})^{2} := \langle \gamma^{\lambda,1,1}_{\rm larc}, e_2 \rangle$ and $L=L[\gamma_{\rm larc}^{\lambda,1,1}]$.
Since $q_2(\lambda) \to q_*$ as $\lambda\to0$, it % also
follows from  \eqref{eq:h_lambda} 
%\eqref{eq:gamma_larc-PPE} 
that 
\begin{align}\label{eq:diverge_larc}
\lim_{\lambda \downarrow 0} h_\lambda= \lim_{\lambda \downarrow 0}(\gamma_{\rm larc}^{\lambda,1,1})^2(\tfrac{L}{2}) = \infty. 
\end{align}
The explicit formulae also  give us the energy formulae, e.g.\ 
\begin{align}\label{eq:longer-arc_energy}
    \mathcal{E}_\lambda[\gamma_{\rm larc}^{\lambda,1,n}]= 2\sqrt{2}n\sqrt{\lambda}\frac{1}{\sqrt{2q_2^2 -1}}\big( (4q_2^2 -3 ) \mathrm{K}(q_2)+2\mathrm{E}(q_2) \big),
\end{align}
where $q_2=q_2(\frac{\lambda}{n^2})$ as in \eqref{eq:mod_PPE}
(see \cite{MYarXiv2409}*{Equation 4.6}).

\begin{remark}\label{rem:energy_comparison_PPE}
Uniqueness of minimizers among nontrivial minimal penalized pinned elasticae has been also revealed in \cite{MYarXiv2409}. 
More precisely, it is shown in \cite{MYarXiv2409}*{Theorem 1.3} that for any nontrivial penalized pinned elastica $\gamma \in A_{\rm pin}$
\[
\mathcal{E}_\lambda[\gamma_{\rm larc}^{\lambda,1,1}] \leq \mathcal{E}_\lambda[\gamma] \quad \text{if} \ \ \lambda \leq \hat{\lambda}. 
\]
Moreover, equality is attained if and only if $\gamma$ coincides up to reflection and reparametrization with $\gamma_{\rm larc}^{\lambda,1,1}$. 
%is a $(\lambda,1,1)$-longer arc.
\end{remark}

\subsection{Figure-eight elastica}
%Using
For given $n\in\mathbf{N}$ and the unique zero $q_*$ of $q \mapsto2\mathrm{E}(q)-\mathrm{K}(q)$, define an \textit{$\frac{n}{2}$-fold figure-eight elastica} by 
\begin{align}\label{eq:figure-8}
    \gamma_{\rm leaf}^{\lambda,n}(s):=\frac{1}{\alpha_*}
        \begin{pmatrix}
        2\mathrm{E}(\am(\alpha_* s-\mathrm{K}(q_*),q_*),q_*) + 2\mathrm{E}(q_*) - \alpha_* s \\
        \sqrt{2}\cn(\alpha_* s-\mathrm{K}(q_*),q_*)
        \end{pmatrix}, 
        \ \ \text{where }\ \alpha_*:=\sqrt{\frac{\lambda}{2(2q_*^2-1)}}
\end{align}
with $s\in[0,2n\mathrm{K}(q_*)/\alpha_*]$. 
The length of $\gamma_{\rm leaf}^{\lambda,n}$ is given by $L[\gamma_{\rm leaf}^{\lambda,n}]=2n\mathrm{K}(q_*)/\alpha_*$. 
The signed curvature is then given by
\begin{align}\label{eq:figure-8_curvature}
    k(s)=2\alpha_* q_* \cn(\alpha_* s-\mathrm{K}(q_*),q_*) \quad \text{for }\ s\in [0,L]. 
\end{align}
As shown already in \cite[Proposition 4.8]{DNP2020}, a $\frac{1}{2}$-fold figure-eight elastica is the
%a 
unique global minimizer of $\mathcal{E}_\lambda$ in $A_{\rm drop}=\{ \gamma\in W^{2,2}_{\rm imm}(0,1;\mathbf{R}^2)\,|\, \gamma(0)=\gamma(1)\}$: 
%%%%%%%%%%%%%%%%%%%%%%%%%%%%%%%%%%%%%%
\begin{proposition}\label{prop:fig8_minimizes}
Let $\lambda>0$ and suppose that $\gamma \in A_{\rm drop}$. 
Then, 
\begin{align}\label{eq:drop_minimal}
\mathcal{E}_\lambda[\gamma] \geq \mathcal{E}_\lambda[\gamma_{\rm leaf}^{\lambda,1}]=2\sqrt{2}\sqrt{\lambda}\frac{(4q_*^2-3)\mathrm{K}(q_*)+2\mathrm{E}(q_*)}{\sqrt{2q_*^2-1}}.
\end{align}
Equality in \eqref{eq:drop_minimal} is attained if and only if $\gamma$ is a $\frac{1}{2}$-fold figure-eight elastica (up to reflection, rotation, translation, and reparametrization).
\end{proposition}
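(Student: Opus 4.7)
The strategy is to combine the direct method with an Euler--Lagrange analysis and an explicit elliptic-integral comparison among closed critical points.

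For existence, given a minimizing sequence $\{\gamma_j\}\subset A_{\rm drop}$, reparametrize each $\gamma_j$ to constant speed on $[0,1]$ and translate so $\gamma_j(0)=0$. The bound $\lambda L[\gamma_j]\leq\mathcal{E}_\lambda[\gamma_j]$ controls the lengths from above, while Fenchel's theorem $\int_{\gamma_j}|k|\,ds\geq 2\pi$ combined with Cauchy--Schwarz yields
\[
4\pi^2\leq\bigg(\int_{\gamma_j}|k|\,ds\bigg)^2\leq L[\gamma_j]\,B[\gamma_j],
\]
so that $\mathcal{E}_\lambda[\gamma_j]\geq 2\sqrt{\lambda L[\gamma_j]B[\gamma_j]}\geq 4\pi\sqrt{\lambda}>0$. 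Hence $L[\gamma_j]$ is bounded both above and away from zero; since $\gamma_j$ is closed, $\|\gamma_j\|_{L^\infty}$ is controlled by $L[\gamma_j]$, and the bending bound provides a uniform $W^{2,2}$-bound. A subsequence converges weakly in $W^{2,2}$ and strongly in $C^1$ to a closed immersed $\gamma_*$, and lower semicontinuity of $B$ together with continuity of $L$ gives $\mathcal{E}_\lambda[\gamma_*]=\inf_{A_{\rm drop}}\mathcal{E}_\lambda$.

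The second step is Euler--Lagrange classification. Admissible variations $\varphi\in W^{2,2}(0,1;\mathbf{R}^2)$ satisfy $\varphi(0)=\varphi(1)$, with $\varphi'$ unconstrained at the endpoints. A standard first-variation calculation shows that the arclength parametrization of $\gamma_*$ has smooth signed curvature $k$ solving $2k''+k^3-\lambda k=0$ on $[0,L]$, with natural conditions $k(0)=k(L)=0$ together with a tangential force balance at the basepoint. The first integral $(k')^2+\tfrac{1}{4}k^4-\tfrac{\lambda}{2}k^2=E$ combined with Euler's classification organizes bounded periodic solutions into circles ($k$ constant), wavelike ($q\in(\tfrac{1}{\sqrt{2}},q_*)$), the borderline figure-eight ($q=q_*$), and orbitlike ($q\in(q_*,1)$) families, all expressed via Jacobi elliptic functions as in Section~\ref{sec:penalizedpinnedelasticae}. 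Circles and orbitlike solutions are excluded by $k(0)=0$, since $k$ would have to vanish while having constant, respectively fixed nonvanishing, sign. Imposing additionally the closure $\gamma_*(L)=0$, i.e.\ $\int_0^L\boldsymbol{t}\,ds=0$, reduces the candidates to the $\tfrac{n}{2}$-fold figure-eight elasticae from \eqref{eq:figure-8} and to $n$-lobed closed wavelike elasticae with a discretely determined modulus $q_n\in(\tfrac{1}{\sqrt{2}},q_*)$, for $n\in\mathbf{N}$.

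The final and most technical step is the energy comparison. For the $\tfrac{1}{2}$-fold figure-eight, applying the identity $2\mathrm{E}(q_*)=\mathrm{K}(q_*)$ to the formula \eqref{eq:longer-arc_energy} specialized at $q=q_*$ reproduces the right-hand side of \eqref{eq:drop_minimal}. For $\tfrac{n}{2}$-fold figure-eights with $n\geq 2$, the energy scales as $n$ times this value, hence is strictly larger. For wavelike closed elasticae, \eqref{eq:longer-arc_energy} writes the energy as $2\sqrt{2}n\sqrt{\lambda}\,h(q_n)$ with
\[
h(q):=\frac{(4q^2-3)\mathrm{K}(q)+2\mathrm{E}(q)}{\sqrt{2q^2-1}},
\]
and a monotonicity analysis of $h$ on $(\tfrac{1}{\sqrt{2}},q_*)$, paralleling the study of $g$ from Section~\ref{sec:penalizedpinnedelasticae} and relying on the elliptic identities collected in Appendix~\ref{sect:elliptic_functions}, shows that $n\,h(q_n)>h(q_*)$ for every admissible discrete modulus $q_n$. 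Uniqueness (up to reflection, rotation, translation, and reparametrization) is read off from the equality cases in these inequalities. The main obstacle is precisely the monotonicity estimate for $n\,h(q_n)$, which carries the essential technical content; all other steps — compactness, the first-variation derivation, and Euler's ODE classification — are standard.
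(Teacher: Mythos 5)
Your overall outline (direct method, Euler--Lagrange analysis, explicit energy comparison) is the same as the paper's proof in Appendix~\ref{sect:appendixB}, but two steps do not hold as written. First, the Fenchel bound: curves in $A_{\rm drop}$ are only position-closed, $\gamma(0)=\gamma(1)$, with free tangents at the basepoint, so Fenchel's theorem does not apply; in fact your conclusion $\mathcal{E}_\lambda[\gamma]\geq 4\pi\sqrt{\lambda}$ contradicts the very statement you are proving, since by $2\mathrm{E}(q_*)=\mathrm{K}(q_*)$ the right-hand side of \eqref{eq:drop_minimal} equals $4\sqrt{2}\sqrt{2q_*^2-1}\,\mathrm{K}(q_*)\sqrt{\lambda}\simeq 10.6\sqrt{\lambda}<4\pi\sqrt{\lambda}$ (the half figure-eight has total curvature $4\arcsin q_*<2\pi$). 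The slip is harmless for compactness: since $\int_0^L\boldsymbol{t}\,\mathrm{d}s=0$, the tangent cannot stay in an open half-plane, so $\int_0^L|k|\,\mathrm{d}s\geq\pi$ and $\pi^2\leq L[\gamma_j]\,B[\gamma_j]$ still bounds the lengths away from zero; but the constant $2\pi$ and your displayed inequality are false.

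The serious gap is the closure step. Once the minimizer is a critical point with $k(0)=k(L)=0$, its curvature is $k(s)=\pm2\alpha q\cn(\alpha s-\mathrm{K}(q),q)$ over $n$ half-periods, and then the component of $\tilde{\gamma}(L)-\tilde{\gamma}(0)$ orthogonal to the elastica's axis vanishes automatically (because $\cn(\pm\mathrm{K}(q),q)=0$), while the axial component equals $\frac{2n}{\alpha}|2\mathrm{E}(q)-\mathrm{K}(q)|$. Hence $\gamma(1)=\gamma(0)$ forces $2\mathrm{E}(q)=\mathrm{K}(q)$, i.e.\ $q=q_*$, for every $n$: there are no ``$n$-lobed closed wavelike elasticae with modulus $q_n\in(\tfrac{1}{\sqrt{2}},q_*)$''. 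Your plan omits this computation and instead relegates the ``essential technical content'' to a monotonicity claim $n\,h(q_n)>h(q_*)$ over these phantom candidates. That cannot serve as a substitute: the candidate set is empty (and you give no argument either way), and the inequality you would need if an $n=1$ candidate existed with modulus in $(\hat q,q_*)$ is false, since by \eqref{eq:diff-h} $h$ is decreasing on $(\tfrac{1}{\sqrt{2}},\hat q)$ and increasing only on $(\hat q,1)$, so $h(q)<h(q_*)$ for $q\in(\hat q,q_*)$ --- this is exactly why $\gamma^{\lambda,1,1}_{\rm larc}$ has strictly smaller energy than the half figure-eight for $\lambda<\hat{\lambda}$, cf.\ Lemma~\ref{lem:property-h} and the proof of Lemma~\ref{lem:embed-touching-rhomb}. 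So the energy comparison cannot replace the closure computation; the paper's proof derives $q=q_*$ directly from $\gamma^1(1)=\gamma^1(0)$, after which the only comparison left is the trivial factor $n$ in the energy $2\sqrt{2}\,n\sqrt{\lambda}\,h(q_*)$ of the $\tfrac{n}{2}$-fold figure-eights, which yields both minimality and uniqueness for $n=1$.
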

%%%%%%%%%%%%%%%%%%%%%%%%%%%%%%%%%%%%%%
%The strategy is same as that of \cite{MY_IUMJ}*{Corollary 5.1} but strictly speaking we need some modification since the functionals are different. 
One can give an alternative proof of the previous proposition along the lines of \cite{MYarXiv2409}.
%We can prove this proposition along the same argument as in \cite{MYarXiv2409}. 
For convenience %the readers 
we give the argument in Appendix~\ref{sect:appendixB}.

\subsection{Free elastica} 
Our classification in Proposition~\ref{prop:PPE-classification} is only valid provided that $\lambda > 0$. In the case of free elasticae, i.e. $\lambda=0$, we obtain a different classification of pinned elasticae. One readily checks that the limit $\lambda \rightarrow 0$ in the formulae of $\gamma^{\lambda,1,n}_{\rm larc}$ and $\gamma^{\lambda,1,n}_{\rm loop}$ is not well-defined, see e.g. \eqref{eq:diverge_larc}.
%In the case $\lambda=0$, a 
%Indeed, the $(\lambda,1,n)$-longer arc is not well-defined as in \eqref{eq:diverge_larc} (the same follows for a $(\lambda,1,n)$-loop).
On the other hand, by letting $\lambda\to0$ %(i.e., $q\to1/\sqrt{2}$) 
in the explicit formula \eqref{eq:sarc-formula} of $\gamma_{\rm sarc}^{\lambda,1,1}$ 
 %as its limit
we obtain a planar curve defined by 
\begin{align}\label{eq:rect-formula}
	\gamma_{\rm rect}(s)&:= \frac{1}{\alpha_0}
        \begin{pmatrix}
        2\mathrm{E}(\am(\alpha_0 s-\mathrm{K}(\tfrac{1}{\sqrt{2}}),\tfrac{1}{\sqrt{2}}),\tfrac{1}{\sqrt{2}}) + 2\mathrm{E}(\tfrac{1}{\sqrt{2}}) - \alpha_0 s \\
        \sqrt{2}\cn(\alpha_0 s-\mathrm{K}(\tfrac{1}{\sqrt{2}}),\tfrac{1}{\sqrt{2}})
        \end{pmatrix}, \qquad s \in [0, 2\mathrm{K}(\tfrac{1}{\sqrt{2}})/\alpha_0], %\quad \textrm{with} \ \ \alpha_0:=4\mathrm{E}(\tfrac{1}{\sqrt{2}})-2\mathrm{K}(\tfrac{1}{\sqrt{2}}). 
\end{align}
where $\alpha_0:=4\mathrm{E}(\tfrac{1}{\sqrt{2}})-2\mathrm{K}(\tfrac{1}{\sqrt{2}})$.
The length and the signed curvature are given by $L_{\rm rect}:=2\mathrm{K}(\tfrac{1}{\sqrt{2}})/\alpha_0$ and 
\begin{align}\label{eq:kappa_rect}
k_{\rm rect}(s):= -\sqrt{2} \alpha_0 \cn \big(\alpha_0 s - \mathrm{K}(\tfrac{1}{\sqrt{2}}), \tfrac{1}{\sqrt{2}} \big),   
\end{align}
respectively.
In addition, we observe from \eqref{eq:rect-formula} that the tangential angle $\theta_{\rm rect}:[0, L_{\rm rect}]\to\mathbf{R}$ of $\gamma_{\rm rect}$ satisfies 
\begin{align}\label{eq:angle-rect}
    \theta_{\rm rect}(0)=\frac{\pi}{2}, \quad \theta_{\rm rect}(L_{\rm rect})=-\frac{\pi}{2}, \quad \text{and} \quad \theta_{\rm rect} \text{ is strictly decreasing}, 
\end{align}
and in particular $\gamma_{\rm rect}$ has a vertical slope at the endpoints.

%\subsection{free elasticae}
%A curve $\gamma \subset \mathbf{R}^2$ 
%is called a \emph{free elastica} for the special case $\lambda=0$, i.e., for the case where the signed curvature $k$ satisfies $2k''+k^3=0$. 
It is well-known %already known 
that if $\gamma \in A_{\rm pin}$ is a free elastica, then its signed curvature $k$ is given by $k\equiv0$ or $k= \pm k_{\rm rect}$, see e.g. \cite{Mandel}. 
%A scaling parameter $\alpha_0$ is chosen such that an arclength parametrized curve $\gamma$ whose length is $L_{\rm rect}:= {2\mathrm{K}(\frac{1}{\sqrt{2}})}/{\alpha_0}$ and whose curvature is $k_{\rm rect}$ satisfies $|\gamma(L)-\gamma(0)| = 1$.
%Now we define $\gamma_{\rm rect}:[0,L_{\rm rect}] \to\mathbf{R}^2$ by an arclength parameterized curve whose curvature is given by $k_{\rm rect}$ such that $\gamma(0)=(0,0)$ and $\gamma'(\frac{L_{\rm rect}}{2}) = e_1$.
Noting that %by the change of variables $1-z^2=\sqrt{\xi}$
\[
\alpha_0=4\mathrm{E}(\tfrac{1}{\sqrt{2}}) - 2\mathrm{K}(\tfrac{1}{\sqrt{2}}) = 2\int_0^1\frac{\sqrt{1-z^2}}{\sqrt{1-\frac{1}{2}z^2}}\; \mathrm{d}z = \frac{1}{\sqrt{2}}\int_0^1\frac{\xi^{-\frac{1}{4}}}{\sqrt{1-\xi}}\; \mathrm{d}\xi = \frac{1}{\sqrt{2}}\mathsf{B}\big( \tfrac{3}{4},\tfrac{1}{2}\big), 
\]
where $\mathsf{B}$ denotes the Beta function, we see that
\begin{align}\label{eq:rect-height}
    \gamma_{\rm rect}(\tfrac{L_{\rm rect}}{2}) = \left(\tfrac{1}{2}, 2\mathsf{B}(\tfrac{3}{4}, \tfrac{1}{2})^{-1} \right). 
\end{align}

This number coincides with the critical height threshold  $h_*$ defined by \eqref{def:h_*} for the existence of minimizers for the obstacle problem among graphs obtained in \cite{DD18, Mueller19, Miura21, Mueller21, Ysima}.

\section{Fundamental properties for the obstacle problem}\label{sect:3}
%Variational inequality, Existence, etc.

In this section, we collect some fundamental properties of (local) minimizers of $\mathcal{E}_\lambda$ in $A$ and study their optimal regularity. 
Hereafter, we always let $\psi$ satisfy assumptions (A1)--(A3) unless specified otherwise.
%an
%additional condition is specified.

\subsection{Description of local minimizers}
First, as known in the previous results of Dall'Acqua--Deckelnick \cite{DD18}*{Proposition~3.2}, we check that any local minimizer $\gamma$ of $\mathcal{E}_\lambda$ in $A$ satisfies the Euler--Lagrange equation $2k''+k^3-\lambda k=0$ where $\gamma$ does not touch the obstacle.

%%%%%%%%%%%%%%%%%%%%%%%%%%%%%%%%%%%%%%
\begin{lemma}[Euler--Lagrange equation above the obstacle]\label{lem:E-L_eq_on_noncoincidence}
Let $\lambda\geq0$ and $\gamma \in A$ be a local minimizer of $\mathcal{E}_\lambda$ in $A$.  
Then, the signed curvature $k$ of $\gamma$ is analytic on $\widetilde{N}_\gamma$ and satisfies $2k'' + k^3 -\lambda k=0$ in $\widetilde{N}_\gamma$. % and . 
In addition, $k$ is also smooth in a neighborhood of $s=0$ and $s=L$ and satisfies $k(0)=k(L)=0$. 
\end{lemma}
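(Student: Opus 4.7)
The plan is to derive the Euler--Lagrange equation inside $\widetilde{N}_\gamma$ by using compactly supported normal variations, to bootstrap regularity through the resulting ODE, and then to invoke Assumption (A2) to carry the analysis up to the endpoints and extract the natural boundary conditions $k(0)=k(L)=0$.

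By continuity of $\tilde{\gamma}$ and $\psi$, the set $\widetilde{N}_\gamma$ is open in $(0,L)$. Working in arclength, for $\varphi \in C_c^\infty(\widetilde{N}_\gamma)$ consider the normal perturbation $\gamma_\epsilon := \tilde{\gamma} + \epsilon\varphi \boldsymbol{n}$. Since $|\tilde{\gamma}'|=1$ and $\supp\varphi$ is compactly contained in $\widetilde{N}_\gamma$, the uniform positivity of $\tilde{\gamma}^2 - \psi(\tilde{\gamma}^1)$ on $\supp\varphi$, combined with the uniform continuity of $\psi$ from (A1), ensures that $\gamma_\epsilon$ is immersed and satisfies $\gamma_\epsilon^2 \geq \psi(\gamma_\epsilon^1)$ for all sufficiently small $|\epsilon|$. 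Local minimality then forces $\frac{d}{d\epsilon}\mathcal{E}_\lambda[\gamma_\epsilon]\big|_{\epsilon=0}=0$, and the classical first variation formula for $B+\lambda L$ under normal variations produces
\begin{equation*}
\int_0^L \bigl( 2k'' + k^3 - \lambda k \bigr)\varphi \ds = 0,
\end{equation*}
with all boundary terms vanishing by the compact support of $\varphi$. Hence $2k''+k^3-\lambda k=0$ on $\widetilde{N}_\gamma$ in the distributional sense; a standard ODE bootstrap starting from $k\in L^2$ promotes $k$ to $C^\infty(\widetilde{N}_\gamma)$, and analyticity then follows from the Cauchy--Kowalewska theorem applied to the equivalent first-order polynomial system.

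Near the endpoints, Assumption (A2) gives $\psi(0),\psi(1)<0=\tilde{\gamma}^2(0)=\tilde{\gamma}^2(L)$, so continuity yields $\delta>0$ with $\tilde{\gamma}^2 > \psi(\tilde{\gamma}^1)$ on $[0,\delta)\cup(L-\delta,L]$. This carries the ODE, and the resulting smoothness of $k$, up to the endpoints. To extract the natural boundary conditions, I would enlarge the admissible test class by taking $\varphi\in C^\infty([0,L])$ with $\supp\varphi\subset[0,\delta)$, $\varphi(0)=0$, and $\varphi'(0)\in\mathbf{R}$ arbitrary; such a variation still preserves $\gamma_\epsilon(0)=(0,0)$ and the obstacle constraint for small $|\epsilon|$. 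In the first variation, the interior contribution vanishes by the Euler--Lagrange equation, leaving a surviving boundary term proportional to $k(0)\varphi'(0)$; requiring this to vanish for all $\varphi'(0)$ forces $k(0)=0$, and the symmetric argument at $s=L$ yields $k(L)=0$. The main technical point throughout is the admissibility bookkeeping---verifying uniformly on the support of each test function that both the immersion and the obstacle constraints survive the perturbation---which is precisely where Assumptions (A1) and (A2) enter.
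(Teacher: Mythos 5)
Your overall strategy mirrors the paper's: compactly supported variations in $\widetilde{N}_\gamma$, a bootstrap to smoothness, analyticity from the polynomial ODE, and endpoint variations permitted by (A2) to extract $k(0)=k(L)=0$. However, there is a genuine gap in your choice of variation. You perturb by $\gamma_\epsilon = \tilde{\gamma} + \epsilon\varphi\boldsymbol{n}$, but at the a priori regularity $\gamma \in W^{2,2}_{\rm imm}$ the unit normal satisfies only $\boldsymbol{n} \in W^{1,2}$ (its derivative is $-k\boldsymbol{t}$ with $k$ merely $L^2$), so $\varphi\boldsymbol{n}$ is in general not a $W^{2,2}$ field. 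Consequently $\gamma_\epsilon$ need not belong to $W^{2,2}_{\rm imm}(0,1;\mathbf{R}^2)$, hence not to $A$, and $\mathcal{E}_\lambda[\gamma_\epsilon]$ can fail to be finite for $\epsilon\neq 0$; the derivative $\frac{d}{d\epsilon}\mathcal{E}_\lambda[\gamma_\epsilon]\big|_{\epsilon=0}$ is then not defined and local minimality gives nothing. The same objection touches your immersedness claim: $(\varphi\boldsymbol{n})' = \varphi'\boldsymbol{n} - \varphi k \boldsymbol{t}$ is only $L^2$, so smallness of $\epsilon$ does not control $|\gamma_\epsilon'|$ pointwise. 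The paper avoids all of this by perturbing with fixed vector-valued test functions, $\gamma_\varepsilon = \gamma + \varepsilon\varphi(s(\cdot))$ with $\varphi \in C^\infty_{\rm c}(U;\mathbf{R}^2)$, which manifestly stays in $W^{2,2}$, and by deriving the weak Euler--Lagrange identity \eqref{eq:EL-on_N_psi} with the derivatives falling on $\varphi$ before bootstrapping.

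Relatedly, writing the first variation directly as $\int_0^L (2k''+k^3-\lambda k)\varphi\,\mathrm{d}s=0$ presupposes the existence of $k''$, i.e.\ the regularity you are trying to prove; the hedge ``in the distributional sense'' only functions if you first record the variation in weak form (terms $\langle\boldsymbol{\kappa},\varphi''\rangle$, $|\boldsymbol{\kappa}|^2\langle\boldsymbol{t},\varphi'\rangle$, $\langle\boldsymbol{\kappa},\varphi\rangle$) and then bootstrap, which is exactly the paper's route. Once the interior step is repaired in this way, your endpoint argument is sound and coincides with the paper's: by (A2) the curve lies strictly above the obstacle near $s=0$ and $s=L$, interior regularity extends $k$ smoothly up to the endpoints, two-sided perturbations fixing $\gamma(0)$ with $\varphi(0)=0$ and $\varphi'(0)$ arbitrary are admissible, and the surviving boundary term proportional to $k(0)\varphi'(0)$ forces $k(0)=0$, likewise $k(L)=0$.
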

%%%%%%%%%%%%%%%%%%%%%%%%%%%%%%%%%%%%%%
\begin{proof}[Sketch of proof.]
Let $\gamma \in A$ be a local minimizer of $\mathcal{E}_\lambda$ in $A$ and $k$ denote its signed curvature. 
Fix $s_0 \in \widetilde{N}_\gamma$ arbitrarily. 
Then, there exists a neighborhood $U \subset \widetilde{N}_\gamma$ of $s_0$ such that,
for any $\varphi \in C^\infty_{\rm c}(U;\mathbf{R}^2)$ and for any $\varepsilon \in (-\varepsilon_0, \varepsilon_0)$ with sufficiently small $\varepsilon_0>0$, we have $\gamma_\varepsilon:=\gamma +\varepsilon\varphi(s(\cdot)) \in A$, where $s(x):=\int_0^x |\gamma'|$. 
This fact together with the known formulae of first derivatives of $B$ and $L$ (cf.\ \cite{MY_AMPA}*{Lemma A.1}) yields
\begin{align}\label{eq:EL-on_N_psi}
\frac{d}{d\varepsilon}\mathcal{E}_\lambda[\gamma_\varepsilon]\Big|_{\varepsilon=0}=
\int_U \Big( 2\langle \boldsymbol{\kappa}, \varphi'' \rangle - 3 |\boldsymbol{\kappa}|^2 \langle \boldsymbol{t}, \varphi' \rangle - \lambda \langle \boldsymbol{\kappa}, \varphi \rangle \Big) \; \mathrm{d}s=0. 
\end{align}
%Hence, following the same argument as in Lemma~\ref{lem:EL-PPE}, we first find that $\kappa$ is smooth on $U$, and then $\kappa$ satisfies \eqref{eq:planar-EL} on $U$, and finally $\kappa$ is analytic in $U$. 
By a standard bootstrap argument we first deduce that $k$ is of class $C^\infty(U)$ and by integrating by parts we also deduce that $k$ satisfies 
$
2k''+k^3-\lambda k=0$ 
in $U$.
Since $k$ is now characterized as a solution of the polynomial differential equation, we obtain analyticity of $k$.

Explicit examination of solutions of $2 k'' + k^3- \lambda k = 0$, see e.g.\ \cite{LS_JDG}, actually yields that $k \in C^\infty(\overline{U})$. Combining (A2) with the fact that $\gamma(0)=(0,0), \gamma(1) =(1,0)$, this already implies that $k$ is smooth in a neighborhood of $s= 0$ and $s= L$.  Given that $ \boldsymbol{n}' = -k \boldsymbol{t}$ we find also that $\boldsymbol{n} \in C^1(\overline{U}; \mathbf{R}^2)$. Bootstrapping we actually obtain $\boldsymbol{n} \in C^\infty(\overline{U};\mathbf{R}^2)$.  We  turn to the boundary condition $k(0)=k(L)=0$.
Combining (A2) with the fact that $\gamma(0)=(0,0)$, we find a sufficiently small $\delta>0$ such that $\boldsymbol{n} \in C^\infty([0,\delta]; \mathbf{R}^2)$ and $\gamma_\varepsilon:=\gamma +\varepsilon\varphi(s(\cdot)) \boldsymbol{n}(s(\cdot)) \in A$ holds for any $\varphi \in W^{2,2}(0,\delta)$ with $\supp\,{\varphi} \subset [0,\delta)$ and $\varphi(0)=0$, and for any $\varepsilon \in (-\varepsilon_0, \varepsilon_0)$ with sufficiently small $\varepsilon_0>0$.
Therefore, the identity \eqref{eq:EL-on_N_psi} where $U$ is replaced with $[0,\delta)$ also holds for  perturbations of this kind, and by choosing $\varphi$ such that $\varphi'(0)=1$, and then integrating by parts, we obtain
\[
\left[2 k(s)\varphi'(s) \right]_{s=0}^{s=\delta} + \int_0^\delta \big( 2k'' + k^3 -\lambda k \big) \varphi \; \mathrm{d}s=0, 
\]
which implies that $k(0)=0$. Similarly we obtain $k(L)=0$.
\end{proof}

Thus, local minimizers are characterized by the Euler--Lagrange equation on the non-coincidence set.
%On the whole interval, local minimizers are given by a solution to the so-called variational inequality: 
To obtain a characterization that is also valid on the whole interval, we derive a so-called \textit{variational inequality} for local minimizers.
%%%%%%%%%%%%%%%%%%%%%%%%%%%%%%%%%%%%%%
\begin{lemma}[Variational inequality]\label{lem:vari-ineq}
Let $\lambda\geq0$.
If $\gamma \in A$ is a local minimizer of $\mathcal{E}_\lambda$ in $A$, then the curvature vector $\boldsymbol{\kappa}$ satisfies
\begin{align}\label{eq:vari-ineq-explict}
    \int_0^L \Big(2\langle \boldsymbol{\kappa},e_2\rangle \varphi''-3|\boldsymbol{\kappa}|^2\langle\boldsymbol{t},e_2\rangle\varphi'-\lambda \langle\boldsymbol{\kappa},e_2\rangle\varphi \Big)\; \mathrm{d}s \geq0 \quad \text{for all} \ \ \varphi \in C^\infty_{\rm c}(0,L) \ \ \text{with} \ \ \varphi\geq0,  
\end{align}
where $L:=L[\gamma]$.
\end{lemma}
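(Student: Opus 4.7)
The plan is to derive the variational inequality using one-sided admissible variations in the vertical direction. The crucial observation is that even though we cannot make arbitrary perturbations (the obstacle blocks downward motion near the coincidence set), we can always push the curve \emph{upward} in the $e_2$-direction without leaving $A$, and this asymmetry produces an inequality rather than an equality.

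Concretely, given $\varphi \in C^\infty_{\rm c}(0,L)$ with $\varphi \geq 0$, I would define the one-sided variation $\gamma_\varepsilon(x) := \gamma(x) + \varepsilon \varphi(s(x))\, e_2$ for $\varepsilon \geq 0$, where $s(x) = \int_0^x |\gamma'|$ is the arclength function along $\gamma$. Then I would verify admissibility: (i) the boundary conditions $\gamma_\varepsilon(0)=(0,0)$, $\gamma_\varepsilon(1)=(1,0)$ hold since $\supp\varphi\Subset(0,L)$; (ii) the immersion property $|\gamma_\varepsilon'|>0$ is preserved for $\varepsilon>0$ sufficiently small because $|\gamma'|$ is bounded away from zero on $[0,1]$ and $(\varphi\circ s)'$ is bounded; (iii) most importantly, the obstacle constraint remains satisfied: since only the second component is altered we have $\gamma_\varepsilon^1=\gamma^1$, and therefore
\begin{equation*}
    \gamma_\varepsilon^2(x) = \gamma^2(x) + \varepsilon \varphi(s(x)) \geq \gamma^2(x) \geq \psi(\gamma^1(x)) = \psi(\gamma_\varepsilon^1(x)).
\end{equation*}
Hence $\gamma_\varepsilon\in A$ for all sufficiently small $\varepsilon \geq 0$.

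Because $\gamma$ is a local minimizer of $\mathcal{E}_\lambda$ in $A$, we obtain $\mathcal{E}_\lambda[\gamma_\varepsilon]\geq \mathcal{E}_\lambda[\gamma]$ for every $\varepsilon\in[0,\varepsilon_0)$, and dividing by $\varepsilon$ and passing to the limit gives the right-sided derivative inequality
\begin{equation*}
    \frac{d}{d\varepsilon}\mathcal{E}_\lambda[\gamma_\varepsilon]\Big|_{\varepsilon=0^+} \geq 0.
\end{equation*}
Applying the first variation formula for $B$ and $L$ used already in the previous lemma (cf.\ \cite{MY_AMPA}*{Lemma~A.1}) with perturbation field $\varphi\, e_2$, and noting that $e_2$ is constant so $(\varphi e_2)'=\varphi' e_2$ and $(\varphi e_2)''=\varphi'' e_2$, yields exactly the expression in \eqref{eq:vari-ineq-explict}. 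This proves the claim.

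I do not anticipate a genuine obstacle in this argument: everything reduces to constructing a one-sided admissible variation and quoting the known first-variation identity. The only point requiring care is the verification that $\gamma_\varepsilon$ lies in $A$, which is the precise reason the resulting statement is a one-sided inequality (as opposed to the equality derived on the non-coincidence set in Lemma~\ref{lem:E-L_eq_on_noncoincidence}, where two-sided perturbations are available). The sign of $\varphi$ and the choice of the vertical direction $e_2$ are together what make the construction work regardless of where $\gamma$ touches the obstacle.
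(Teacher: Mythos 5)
Your proposal is correct and follows essentially the same route as the paper: one-sided admissible perturbations $\gamma + \varepsilon\varphi\, e_2$ with $\varphi \geq 0$, whose admissibility forces only the right-sided derivative condition, which the first-variation formulae for $B$ and $L$ convert into \eqref{eq:vari-ineq-explict}. The only (harmless) difference is that you work directly with test functions in the arclength parameter, whereas the paper perturbs with $\eta \in C^\infty_{\rm c}(0,1)$ in the original parameter and then notes the equivalence by a change of variables.
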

%%%%%%%%%%%%%%%%%%%%%%%%%%%%%%%%%%%%%%
\begin{proof}
If $\gamma$ is a local minimizer of $\mathcal{E}_\lambda$ in $A$, then $\mathcal{E}_\lambda[\gamma]\leq \mathcal{E}_\lambda[\gamma_\varepsilon]$ holds for any perturbation $\{\gamma_\varepsilon\}_{\varepsilon\geq0}\subset A$ of $\gamma$ provided that all the perturbation functions $\gamma_\varepsilon$ are admissible. This can be achieved in particular by choosing perturbations in the positive vertical direction. Doing so, we find that $\gamma$ satisfies %in a vertical direction; 
%as a particular choice of a perturbation, $\gamma$ satisfies
\begin{align}\label{eq:vari-ineq-soft}
    \frac{d}{d\varepsilon}\mathcal{E}_\lambda[\gamma+\varepsilon \eta e_2] \Big|_{\varepsilon=0}\geq0 \quad \text{for all} \ \ \eta \in C^\infty_{\rm c}(0,1) \ \ \text{with} \ \ \eta\geq0.
\end{align} 
By a straightforward calculation, we can prove that the above inequality is equivalent to 
\eqref{eq:vari-ineq-explict}. 
\end{proof}

Thus the variational inequality yields a necessary condition for a curve to be a local minimizer of $\mathcal{E}_\lambda$ in $A$.
We say that $\gamma \in A$ is a solution to the \textit{variational inequality} if $\gamma \in A$ satisfies \eqref{eq:vari-ineq-explict}.

\begin{remark}[Comparison with graph curves]\label{rem:vari-ineq}
Let us consider a curve represented by a graph of a function $\gamma_u(x):=(x,u(x))$ for some  $u\in W^{2,2}(0,1)\cap W^{1,2}_0(0,1)$ such that $u \geq \psi\vert_{[0,1]}$. One readily checks that $\gamma_u \in A$. Moreover, 
%Then, 
$\gamma_u + \varepsilon (v-u) e_2 \in A$ holds for any $\varepsilon \in [0,1]$ and  $v\in W^{2,2}(0,1)\cap W^{1,2}_0(0,1)$ with $v\geq \psi|_{[0,1]}$.  
Therefore, if $\gamma_u$ is a local minimizer of $B$ in $A$, then \eqref{eq:vari-ineq-soft} holds with the choice $\eta=v-u$. 
With this choice of $\eta$ we see that \eqref{eq:vari-ineq-soft} combined with $\lambda=0$ is reduced to
\[
\frac{d}{d\varepsilon}B[\gamma_u +\varepsilon \eta e_2]\Big|_{\varepsilon=0}=
\int_0^1\Big( 2\frac{u''}{(1+(u')^2)^{\frac{5}{2}}}(v-u)'' -5\frac{(u'')^2 u'}{(1+(u')^2)^{\frac{7}{2}}}(v-u)' \Big) \; \mathrm{d}x \geq0, 
\]
which is equivalent to the variational inequality in the graph case (see e.g.\ \cite{DD18}*{Equation 1.4}).
Thus our variational inequality \eqref{eq:vari-ineq-soft} can be regarded as an extension of the graph case $\gamma=\gamma_u$.

For   local minimizers among graph curves with $\lambda= 0$ it is already known that, (i) the %local 
regularity above the obstacle is %can be improved up to 
$C^\infty$ (cf.\ \cite{DD18}*{Proposition~3.2}), (ii) the global regularity (i.e.\ regularity of the solution to the variational inequality in $(0,1)$) can be improved up to $W^{3,\infty}(0,1)$, which is optimal in the sense of the Sobolev class (cf.\ \cite{Miura21}*{Remark~3.7} or \cite{Ysima}*{Theorem~1.2}).
In our setting, the %local
regularity above the obstacle %can be also improved
is also $C^\infty$ as seen in Lemma~\ref{lem:E-L_eq_on_noncoincidence}.
%However, compared to the graph case, it is not clear whether or not the global regularity can be improved:
On the other hand, improving the global regularity is more involved than in the graph case.
In fact, the leading term $\langle\boldsymbol{\kappa}, e_2\rangle\varphi''=k \langle\boldsymbol{n},e_2\rangle \varphi''$ in our variational inequality \eqref{eq:vari-ineq-explict} may degenerate if $\boldsymbol{n} =\pm e_1$ and such a degeneracy imposes an additional challenge to overcome in the regularity discussion.
\end{remark}
\subsection{Optimal regularity in the case of Lipschitz obstacles}\label{sec:3.2Regularity}

With a mild additional condition on the obstacle, the  difficulty displayed in Remark \ref{rem:vari-ineq} can be overcome and $W^{3,\infty}$-regularity (of the arclength parametrization of local minimizers of $\mathcal{E}_\lambda$ in $A$) can be deduced. To this end we first show a conditional regularity result for $\langle \boldsymbol{\kappa}, e_2 \rangle$ depending on the integrability of $|\boldsymbol{\kappa}|$.   
%notice that \eqref{eq:vari-ineq-explict} yields some regularity for $\langle \boldsymbol{\kappa},e_2\rangle$.

\begin{lemma}[Regularity via integrability]\label{lem:iteration}
    Let $\gamma \in A$ be such that the curvature vector $\boldsymbol{\kappa}$ of the arclength parametrization $\tilde{\gamma}$ satisfies the variational inequality \eqref{eq:vari-ineq-explict}. 
    % \begin{equation}
    %     \int_0^L \Big(2\langle \boldsymbol{\kappa},e_2\rangle \varphi''-3|\boldsymbol{\kappa}|^2\langle\boldsymbol{t},e_2\rangle\varphi'-\lambda \langle\boldsymbol{\kappa},e_2\rangle\varphi \Big)\; \mathrm{d}s \geq0 \quad \text{for all} \ \ \varphi \in C^\infty_{\rm c}(0,L) \ \ \text{with} \ \ \varphi\geq0,
    % \end{equation}
    % with $L= L[\gamma]$. 
    Then one has for each $p \in [1,\infty]$
    \begin{equation}
        |\boldsymbol{\kappa}|^2 \in L_{loc}^p(0,L) \quad \Rightarrow \quad  \langle   \boldsymbol{\kappa}, e_2 \rangle \in W^{1,p}_{loc}(0,L). 
    \end{equation}
\end{lemma}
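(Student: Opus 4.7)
The plan is as follows. Set $f:= \langle \boldsymbol{\kappa}, e_2 \rangle$ and $g := |\boldsymbol{\kappa}|^2 \langle \boldsymbol{t}, e_2 \rangle$, so that \eqref{eq:vari-ineq-explict} reads
\[
\int_0^L \bigl( 2 f \varphi'' - 3 g \varphi' - \lambda f \varphi \bigr)\, \mathrm{d}s \geq 0
\]
for every nonnegative $\varphi \in C^\infty_{\rm c}(0,L)$. Since $|\boldsymbol{t}|\equiv 1$ and $|f|\le|\boldsymbol{\kappa}|$, the hypothesis $|\boldsymbol{\kappa}|^2 \in L_{loc}^p(0,L)$ immediately yields $g \in L_{loc}^p(0,L)$ and $f\in L_{loc}^{2p}(0,L)\subset L_{loc}^p(0,L)$; as $\gamma \in W^{2,2}_{\rm imm}$, both $f$ and $g$ also lie in $L_{loc}^1(0,L)$ irrespective of $p$.

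The key step I would then take is to interpret the variational inequality distributionally. It says precisely that
\[
\mu := 2 f'' + 3 g' - \lambda f \geq 0 \quad \text{in } \mathcal{D}'(0,L),
\]
in the sense that $\langle \mu, \varphi\rangle\geq 0$ for every nonnegative $\varphi \in C^\infty_{\rm c}(0,L)$. Schwartz's theorem on nonnegative distributions then produces a unique nonnegative Radon measure on $(0,L)$ representing $\mu$. Rewriting, one has
\[
(2 f' + 3 g)' = \mu + \lambda f \quad \text{in } \mathcal{D}'(0,L).
\]
The right-hand side is a locally finite signed Radon measure (sum of a nonnegative Radon measure and an $L_{loc}^1$-function), so the distribution $T := 2 f' + 3 g$ admits a representative of locally bounded variation on $(0,L)$, which in particular lies in $L_{loc}^\infty(0,L)$. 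Writing $f' = \tfrac{1}{2}(T - 3 g)$ and combining $T\in L_{loc}^\infty(0,L)$ with $g \in L_{loc}^p(0,L)$ gives $f' \in L_{loc}^p(0,L)$; together with $f \in L_{loc}^p(0,L)$ this is exactly $f \in W^{1,p}_{loc}(0,L)$.

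The only nonroutine point is the passage from the variational inequality to a Radon measure. A priori $\mu$ is a distribution of order two (since the test function appears with two derivatives), and what makes the argument work is that the one-sided positivity forces $\mu$ to be of order zero, and hence forces $(2f'+3g)'$ to be a locally finite measure rather than merely a distribution. This structural gain of one derivative in the combination $2f'+3g$ is the heart of the proof, and the reason only the $e_2$-component of $\boldsymbol{\kappa}$ is improved is that the admissible one-sided perturbations used to derive \eqref{eq:vari-ineq-explict} point in the $e_2$-direction, as already noted in Remark~\ref{rem:vari-ineq}; the remaining estimates are bookkeeping.
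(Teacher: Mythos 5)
Your proof is correct, and it shares the paper's key first step: the one-sided positivity of \eqref{eq:vari-ineq-explict} is converted, via the Riesz--Schwartz representation of nonnegative distributions (the paper cites \cite{EVGAR}*{Theorem 1.39}), into a nonnegative Radon measure $\mu$. Where you diverge is in how the extra derivative is extracted. The paper works on $(\delta,L-\delta)$, integrates the measure by hand via Fubini to produce the bounded decreasing function $f_\delta(y)=\mu((y,L-\delta))\in L^\infty(\delta,L-\delta)$, absorbs it into $g_\delta=3|\boldsymbol{\kappa}|^2\langle\boldsymbol{t},e_2\rangle+f_\delta$, and then identifies the weak derivative of $2\langle\boldsymbol{\kappa},e_2\rangle$ through an explicit du Bois--Reymond-type manipulation with a fixed $\eta$ of unit integral (also handling the $\lambda$-term by a second Fubini). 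You instead pass directly to $T:=2f'+3g$, observe that $T'=\mu+\lambda f$ is a locally finite signed measure, and invoke the one-dimensional structural fact that a distribution whose derivative is a locally finite measure has a $BV_{loc}$, hence $L^\infty_{loc}$, representative, after which $f'=\tfrac12(T-3g)\in L^p_{loc}$ follows by solving. Your route is shorter and avoids the localization and test-function bookkeeping, at the price of invoking the BV structure theorem; the paper's argument is more elementary and self-contained after the measure representation, needing only Fubini and direct computation. One cosmetic point: for $p=\infty$ the statement $f\in L^{2p}_{loc}$ should simply be read as $f\in L^\infty_{loc}$, which follows from $|f|\leq|\boldsymbol{\kappa}|$ just as in the finite-$p$ case, so nothing is lost.
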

\begin{proof} Let $p \in [1,\infty]$ and $|\boldsymbol{\kappa}|^2 \in L^p_{loc}(0,L)$. 
Since $\boldsymbol{\kappa}$ satisfies \eqref{eq:vari-ineq-explict}, 
    by  \cite[Theorem 1.39]{EVGAR} there exists a Radon measure $\mu$ on $(0,L)$ such that 
\begin{equation}\label{eq:EvansGeriepyEquation}        \int_0^L \Big(2\langle \boldsymbol{\kappa},e_2\rangle \varphi''-3|\boldsymbol{\kappa}|^2\langle\boldsymbol{t},e_2\rangle\varphi'-\lambda \langle\boldsymbol{\kappa},e_2\rangle\varphi \Big)\; \mathrm{d}s  = \int_0^L \varphi \; \mathrm{d}\mu \qquad \textrm{for all } \varphi \in C_{\rm c}^\infty(0,L). 
    \end{equation}
    Now let $\delta \in (0, \frac{L}{2})$ be arbitrary. For each $\varphi \in C_{\rm c}^\infty(\delta, L-\delta)$ we have by Fubini's Theorem
    \begin{equation}
        % \int_0^L \varphi(x) \; \mathrm{d}\mu(x) = \int_\delta^{L-\delta} \varphi(x) \; \mathrm{d}\mu(x) =  \int_\delta^{L-\delta} \int_\delta^x \varphi'(y) \; \mu(y) = \int_\delta^{L-\delta} \mu((y,L-\delta)) \varphi'(y) \; \mathrm{d}y.
        \int_0^L \varphi(s) \; \mathrm{d}\mu(s) = \int_\delta^{L-\delta} \varphi(s) \; \mathrm{d}\mu(s) =  \int_\delta^{L-\delta} \int_\delta^s \varphi'(x) \; \mathrm{d}x \mathrm{d}\mu(s) = \int_\delta^{L-\delta} \mu((s,L-\delta)) \varphi'(s) \; \mathrm{d}s.
    \end{equation}
    Since $\mu$ is locally finite, we have that $f_\delta:(\delta,L-\delta) \rightarrow \mathbf{R};  y \mapsto \mu ((y,L-\delta))$ is a bounded function. As it is decreasing, it is also Borel measurable and hence lies in $L^\infty(\delta, L-\delta)$. We infer from this and \eqref{eq:EvansGeriepyEquation}  
    \begin{equation}
        \int_\delta^{L-\delta} \Big(2\langle \boldsymbol{\kappa},e_2\rangle \varphi''-3|\boldsymbol{\kappa}|^2\langle\boldsymbol{t},e_2\rangle\varphi'-\lambda \langle\boldsymbol{\kappa},e_2\rangle\varphi \Big)\; \mathrm{d}s  = \int_\delta^{L-\delta} f_\delta \varphi' \; \mathrm{d}s \qquad \textrm{for all } \varphi \in C_{\rm c}^\infty(\delta,L-\delta).
    \end{equation}
    Notice now that by assumption $g_\delta := 3|\boldsymbol{\kappa}|^2\langle\boldsymbol{t},e_2\rangle + f_\delta \in L^p(\delta, L-\delta)$ and 
   \begin{equation}\label{eq:DistributionalPoisson}
        \int_\delta^{L-\delta} \Big(2\langle \boldsymbol{\kappa},e_2\rangle \varphi''-g_\delta \varphi'-\lambda \langle\boldsymbol{\kappa},e_2\rangle\varphi \Big)\; \mathrm{d}s  = 0 \qquad \textrm{for all } \varphi \in C_{\rm c}^\infty(\delta,L-\delta).
    \end{equation}
    Choose now some fixed $\eta \in C_{\rm c}^\infty(\delta,L-\delta)$ such that $\int_\delta^{L-\delta} \eta = 1. $
    Now let $\zeta \in C_{\rm c}^\infty(\delta,L-\delta)$ be arbitrary. Define $c := \int_\delta^{L-\delta} \zeta(y) \; \mathrm{d}y$ and observe that also $\phi:= \int_\delta^\cdot \zeta- c \eta \in C_{\rm c}^\infty(\delta, L-\delta)$. Then \eqref{eq:DistributionalPoisson} yields 
    \begin{equation}
        \int_\delta^{L-\delta} \Big( 2 \langle \boldsymbol{\kappa},e_2\rangle (
        %\psi
        \zeta' - c \eta'') - g_\delta (
        %\psi
        \zeta - c \eta') - \lambda \langle \boldsymbol{\kappa},e_2\rangle \phi \Big) \; \mathrm{d}s = 0. 
    \end{equation}
    We rearrange to obtain 
    \begin{align}
        &\int_\delta^{L-\delta} 2 \langle \boldsymbol{\kappa}, e_2 \rangle \zeta' \; \mathrm{d}s  \\
        =& \int_\delta^{L-\delta} g_\delta \zeta \; \mathrm{d}s + c \int_\delta^{L-\delta} \Big( 2 \langle \boldsymbol{\kappa}, e_2 \rangle \eta'' - g_\delta \eta' - \lambda \langle \boldsymbol{\kappa}, e_2 \rangle \eta \Big) \; \mathrm{d}s + \int_{\delta}^{L-\delta} \lambda \langle \boldsymbol{\kappa}, e_2 \rangle \left( \int_\delta^{\cdot} \zeta \right) \; \mathrm{d}s. 
    \end{align}
    Since $|\langle \boldsymbol{\kappa}, e_2 \rangle| \leq |\boldsymbol{\kappa}|\leq \frac{1+ |\boldsymbol{\kappa}|^2}{2}$ we find that $\langle \boldsymbol{\kappa}, e_2 \rangle \in L^p(\delta,L-\delta)$. In particular $A := \int_\delta^{L-\delta} (2 \langle \boldsymbol{\kappa}, e_2 \rangle \eta'' - g_\delta \eta' - \lambda \langle \boldsymbol{\kappa}, e_2 \rangle \eta) \; \mathrm{d}s$ is a finite constant. Using this and also Fubini's Theorem in the last summand we find 
    \begin{equation}
        \int_\delta^{L-\delta} 2 \langle \boldsymbol{\kappa}, e_2 \rangle %\psi'
        \zeta'
        \; \mathrm{d}s = \int_\delta^{L-\delta} g_\delta 
        %\psi 
         \zeta \; \mathrm{d}s + A c + \int_\delta^{L-\delta} \left( \lambda \int_{\cdot}^{L-\delta} \langle \boldsymbol{\kappa}, e_2 \rangle \right) \zeta \; \mathrm{d}s. 
    \end{equation}
    Recalling the definition of $c$ we have 
    \begin{equation}
         \int_\delta^{L-\delta} 2 \langle \boldsymbol{\kappa}, e_2 \rangle \zeta' \; \mathrm{d}s = \int_\delta^{L-\delta} \left( g_\delta + A + \lambda \int_{\cdot}^{L-\delta} \langle \boldsymbol{\kappa}, e_2 \rangle \right) \zeta \; \mathrm{d}s. 
    \end{equation}
    Noticing that $\big( g_\delta + A + \lambda \int_{\cdot}^{L-\delta} \langle \boldsymbol{\kappa}, e_2 \rangle \big)  \in L^p(\delta, L-\delta)$ and that  $\zeta \in C_{\rm c}^\infty(\delta, L-\delta)$ was arbitrary we obtain that $2 \langle \boldsymbol{\kappa}, e_2 \rangle \in W^{1,p}(\delta, L-\delta)$. The claim follows.  
\end{proof}

\begin{remark}\label{rem:3.5}
Notice that $|\boldsymbol{\kappa}|^2 \in L^1(0,L)$ is automatically satisfied for any $\gamma \in A$ as 
\begin{equation}
    \int_0^L |\boldsymbol{\kappa}|^2 \; \mathrm{d}s = B[\gamma] < \infty.
\end{equation}
Therefore Lemma \ref{lem:iteration} implies that  if
$\gamma$
solves the variational inequality, then $\langle\boldsymbol{\kappa}, e_2 \rangle \in W^{1,1}_{loc}(0,L)$. If $\gamma$ is additionally a local minimizer of $\mathcal{E}_\lambda$ in $A$, then we can actually deduce that $\langle \boldsymbol{\kappa}, e_2 \rangle \in W^{1,1}(0,L)$. Indeed, recall that $\langle \boldsymbol{\kappa}, e_2 \rangle = k \langle \boldsymbol{ n}, e_2 \rangle$ and in a neighborhood of $0$ and $L$ one has that  $k$ is analytic and $\boldsymbol{n} \in W^{1,2}(0,L)$, cf. Lemma \ref{lem:E-L_eq_on_noncoincidence}. Hence we may from now on use that $\langle \boldsymbol{\kappa}, e_2 \rangle  \in W^{1,1}(0,L)$. 
\end{remark}

 Next we intend to deduce Sobolev regularity for the  signed curvature $k$. To this end we  make use of the equation $k \langle \boldsymbol{n}, e_2\rangle = \langle \boldsymbol{\kappa}, e_2\rangle$. If $\langle \boldsymbol{n}, e_2 \rangle \neq 0$ one can solve for $k$ and obtain $k = \frac{\langle \boldsymbol{\kappa}, e_2\rangle}{\langle \boldsymbol{n}, e_2\rangle}$. Using this the Sobolev regularity of $\langle \boldsymbol{\kappa}, e_2\rangle$ inherits easily to $k$. Problems occur at points where $\langle \boldsymbol{n}, e_2 \rangle = 0$. If such points are on the noncoincidence set $\widetilde{N}_\gamma$, we do not have to bother because in this case regularity of $k$ is already obtained in Lemma \ref{lem:E-L_eq_on_noncoincidence}. So the only remaining problematic case is when $\langle \boldsymbol{n}(s_0), e_2 \rangle = 0$ for a point of coincidence $s_0$.  

 The next lemma rules out the existence of such points for Lipschitz continuous obstacles. 

 \begin{lemma}[Nonverticality on the coincidence set]\label{lem:nonveriticality}
 Suppose that $\psi$ is \textrm{Lipschitz continuous}, i.e. there exists some $K> 0$ such that $|\psi(x)- \psi(y)| \leq K |x-y|$ for all $x,y \in \mathbf{R}$. Moreover fix $\gamma \in A$ with arclength parametrization $\tilde{\gamma} : [0,L] \rightarrow \mathbf{R}^2$. Then  
 \begin{equation}
     \langle \boldsymbol{n}(s_0) ,e_2 \rangle \neq 0 \quad \textrm{for all} \ \  s_0 \in (0,L) \setminus \widetilde{N}_\gamma. 
 \end{equation}
 \end{lemma}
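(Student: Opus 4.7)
The plan is to argue by contradiction: assume there exists an interior coincidence point $s_0 \in (0,L) \setminus \widetilde{N}_\gamma$ at which $\langle \boldsymbol{n}(s_0), e_2\rangle = 0$, and derive a violation of the obstacle constraint using the Lipschitz hypothesis on $\psi$. Since $\boldsymbol{n}(s_0)$ is a unit vector with vanishing vertical component, $\boldsymbol{n}(s_0) = \pm e_1$, which forces $\boldsymbol{t}(s_0) = \pm e_2$; in particular $(\tilde{\gamma}^1)'(s_0) = 0$ and $(\tilde{\gamma}^2)'(s_0) = \pm 1$. The geometric picture is that the curve passes vertically through a point where it meets the obstacle, which should force it to dip beneath $\psi$ on the appropriate side.

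Next I would quantify this using the $C^1$ regularity that follows from $\tilde{\gamma} \in W^{2,2}$. By continuity of $\tilde{\gamma}'$ at $s_0$,
\[
\tilde{\gamma}^1(s_0 + \varepsilon) = \tilde{\gamma}^1(s_0) + o(|\varepsilon|), \qquad \tilde{\gamma}^2(s_0 + \varepsilon) = \tilde{\gamma}^2(s_0) \pm \varepsilon + o(|\varepsilon|),
\]
as $\varepsilon \to 0$. Since $s_0 \in \widetilde{I}_\gamma$ we have $\tilde{\gamma}^2(s_0) = \psi(\tilde{\gamma}^1(s_0))$, and the Lipschitz estimate $|\psi(\tilde{\gamma}^1(s_0+\varepsilon)) - \psi(\tilde{\gamma}^1(s_0))| \leq K |\tilde{\gamma}^1(s_0+\varepsilon)-\tilde{\gamma}^1(s_0)| = o(|\varepsilon|)$ yields
\[
\psi(\tilde{\gamma}^1(s_0+\varepsilon)) = \tilde{\gamma}^2(s_0) + o(|\varepsilon|).
\]
Substituting into the admissibility condition $\tilde{\gamma}^2(s_0+\varepsilon) \geq \psi(\tilde{\gamma}^1(s_0+\varepsilon))$ and cancelling $\tilde{\gamma}^2(s_0)$ on both sides, we obtain the inequality
\[
\pm\varepsilon \geq o(|\varepsilon|).
\]

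Finally, because $s_0$ is an \emph{interior} point, perturbations $\varepsilon$ of either sign are admissible. Choosing $\varepsilon$ with the opposite sign to the one in front of $\varepsilon$ on the left (i.e.\ $\varepsilon < 0$ if $\boldsymbol{t}(s_0)=e_2$, and $\varepsilon>0$ otherwise) and dividing by $|\varepsilon|$ gives $-1 \geq o(1)$, a contradiction as $|\varepsilon| \to 0$. The main (and essentially only) subtlety will be keeping the Lipschitz bound and the $o(|\varepsilon|)$-Taylor expansion compatible, and being careful with the sign so that the perturbation direction remains available inside $(0,L)$; the assumptions (A2) already imply that $s_0 \notin \{0,L\}$, so restricting to interior coincidence points is no real restriction.
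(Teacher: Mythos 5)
Your proposal is correct and follows essentially the same route as the paper: both argue by contradiction from $\boldsymbol{t}(s_0)=\pm e_2$, combine the obstacle constraint $\tilde{\gamma}^2\geq\psi\circ\tilde{\gamma}^1$ at the coincidence point with the Lipschitz bound on $\psi$ and $(\tilde{\gamma}^1)'(s_0)=0$, and conclude that the vertical component would have to have vanishing derivative, contradicting $|(\tilde{\gamma}^2)'(s_0)|=1$ (the paper phrases this via one-sided difference quotients rather than your Taylor-expansion/$o(|\varepsilon|)$ bookkeeping, which is only a cosmetic difference).
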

 \begin{proof}
     For a contradiction we assume the opposite, i.e., there exists $s_0 \in (0,L) \setminus \widetilde{N}_\gamma$ such that 
    \begin{equation}
        \langle\boldsymbol{n}(s_0),e_2\rangle=0.
    \end{equation}
    We infer that $(\tilde{\gamma}^1)'(s_0) =  0$ %$(\tilde{\gamma}^1)'(s_0) \neq 0$
    and $|(\tilde{\gamma}^2)'(s_0)| =1$. Using that $\tilde{\gamma}^2(s) \geq \psi(\tilde{\gamma}^1(s))$ for each $s \in (0,L)$ and $\tilde{\gamma}^2(s_0)= \psi(\tilde{\gamma}^1(s_0))$ we find 
    \begin{equation}
        \tilde{\gamma}^2(s) - \tilde{\gamma}^2(s_0) \geq \psi(\tilde{\gamma}^1(s)) - \psi(\tilde{\gamma}^1(s_0)) \qquad \forall s \in (0,L). 
    \end{equation}
    We can now estimate the right-hand side by $-K |\tilde{\gamma}^1(s)- \tilde{\gamma}^1(s_0)|$ and divide by $|s-s_0|$ to obtain
     \begin{equation}
       \frac{ \tilde{\gamma}^2(s) - \tilde{\gamma}^2(s_0) }{|s-s_0|} \geq -K \left\vert \frac{\tilde{\gamma}^1(s)- \tilde{\gamma}^1(s_0)}{s-s_0} \right\vert  \qquad \forall s \in (0,L) \setminus \{s_0\}.
    \end{equation}
    Letting $s \rightarrow s_0+$ and using that $(\tilde{\gamma}^1)'(s_0)=0$ we deduce that $(\tilde{\gamma}^2)'(s_0) \geq 0$. Furthermore, letting $s \rightarrow s_0-$ and using that $(\tilde{\gamma}^1)'(s_0)=0$ we obtain $-(\tilde{\gamma}^2)'(s_0) \geq 0$. These observations imply $(\tilde{\gamma}^2)'(s_0) = 0$, 
    a contradiction. The claim follows.
 \end{proof}

 The previous results finally yield (optimal) $W^{1,\infty}$-regularity for $k$.  
 
 %We will next see a Lemma that rules out this case on  under the additional assumption that 
 
 %Notice that the regularity of  $\langle \boldsymbol{\kappa}, e_2\rangle$ d

\begin{lemma}[Curvature regularity of local minimizers in $A$] Suppose that $\psi$ is Lipschitz continuous. Further let $\gamma \in A$ be a local minimizer of $\mathcal{E}_\lambda$ in $A$. Then $k \in W^{1,\infty}(0,L)$. 
% (TODO Marius) $\langle\vec{\kappa}, e_2 \rangle = \kappa \langle n , e_2 \rangle$. 
\end{lemma}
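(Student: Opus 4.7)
My plan is to bootstrap the regularity of $k$ in two stages, leveraging the pointwise identity $\langle \boldsymbol{\kappa}, e_2 \rangle = k \langle \boldsymbol{n}, e_2 \rangle$ to transfer the regularity already secured for $\langle \boldsymbol{\kappa}, e_2 \rangle$ (via Lemma~\ref{lem:iteration} and Remark~\ref{rem:3.5}) to $k$ itself. The Lipschitz assumption on $\psi$ enters only through Lemma~\ref{lem:nonveriticality}, which forces $\langle \boldsymbol{n}, e_2 \rangle$ to stay nonzero on the coincidence set, so that the quotient $k = \langle \boldsymbol{\kappa}, e_2 \rangle / \langle \boldsymbol{n}, e_2 \rangle$ is well-defined in a neighborhood of every coincidence point.

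First I would establish that $k \in L^\infty(0,L)$ by covering $[0,L]$ with open intervals on each of which $k$ is bounded. Near the endpoints $s=0$ and $s=L$, the smoothness asserted in Lemma~\ref{lem:E-L_eq_on_noncoincidence} gives local boundedness. For $s_0 \in \widetilde{N}_\gamma$, the openness of $\widetilde{N}_\gamma$ and analyticity from the same lemma yield a closed sub-interval around $s_0$ on which $k$ is bounded. For $s_0 \in \widetilde{I}_\gamma \cap (0,L)$, Lemma~\ref{lem:nonveriticality} together with the continuity of $\boldsymbol{n}$ (which holds since $\gamma \in W^{2,2}_{\rm imm}$ implies $\tilde{\gamma} \in C^1$) provides a neighborhood $U$ on which $|\langle \boldsymbol{n}, e_2 \rangle| \geq c_0 > 0$, while Remark~\ref{rem:3.5} gives $\langle \boldsymbol{\kappa}, e_2 \rangle \in W^{1,1}(0,L) \hookrightarrow C^0([0,L])$; the quotient $k = \langle \boldsymbol{\kappa}, e_2 \rangle / \langle \boldsymbol{n}, e_2 \rangle$ is therefore bounded on $U$. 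A finite subcover of the compact interval $[0,L]$ then produces a uniform $L^\infty$-bound on $k$.

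Once $k \in L^\infty(0,L)$, the Frenet relation $\boldsymbol{n}' = -k\boldsymbol{t}$ immediately gives $\boldsymbol{n} \in W^{1,\infty}(0,L)$ and in particular $\langle \boldsymbol{n}, e_2 \rangle \in W^{1,\infty}(0,L)$, while $|\boldsymbol{\kappa}|^2 = k^2 \in L^\infty_{\mathrm{loc}}(0,L)$ allows me to apply Lemma~\ref{lem:iteration} with $p = \infty$ and conclude $\langle \boldsymbol{\kappa}, e_2 \rangle \in W^{1,\infty}_{\mathrm{loc}}(0,L)$. I then repeat the previous covering argument, but now asserting $W^{1,\infty}$ in place of $L^\infty$: near endpoints $k$ is smooth; on $\widetilde{N}_\gamma$ analyticity gives local $W^{1,\infty}$-bounds; and on a neighborhood $U$ of any coincidence point the quotient formula exhibits $k$ as the ratio of two $W^{1,\infty}(U)$-functions whose denominator is bounded away from zero, so $k \in W^{1,\infty}(U)$. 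Assembling the finite subcover yields $k \in W^{1,\infty}(0,L)$.

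The step I expect to be the main obstacle, and the one that truly distinguishes this nongraphical problem from the graphical one, is handling the coincidence set: the variational inequality \eqref{eq:vari-ineq-explict} does not produce a pointwise Euler--Lagrange equation there, so no direct ODE bootstrap is available. The entire regularity argument hinges on the quotient formula $k = \langle \boldsymbol{\kappa}, e_2 \rangle / \langle \boldsymbol{n}, e_2 \rangle$ being well-posed, which in turn rests on Lemma~\ref{lem:nonveriticality} and thus on the Lipschitz hypothesis on $\psi$; without this assumption the denominator could degenerate exactly where the quotient is needed, and one would expect at best a weaker (e.g.\ $BV$-type) regularity for $k$.
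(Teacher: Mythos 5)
Your proposal is correct and follows essentially the same route as the paper's proof: the quotient identity $k=\langle\boldsymbol{\kappa},e_2\rangle/\langle\boldsymbol{n},e_2\rangle$ near coincidence points (licensed by Lemma~\ref{lem:nonveriticality}, which is where the Lipschitz hypothesis enters), analyticity on $\widetilde{N}_\gamma$ and near the endpoints from Lemma~\ref{lem:E-L_eq_on_noncoincidence}, and the bootstrap via Lemma~\ref{lem:iteration} with $p=\infty$ together with the Frenet relation $\boldsymbol{n}'=-k\boldsymbol{t}$. The only (harmless) deviation is that you obtain the initial $L^\infty$-bound on $k$ directly from the continuity of $\langle\boldsymbol{\kappa},e_2\rangle\in W^{1,1}(0,L)\hookrightarrow C^0([0,L])$ and a compactness covering, whereas the paper first deduces $k\in W^{1,1}_{loc}(0,L)$ and then invokes the one-dimensional embedding into $L^\infty_{loc}$.
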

\begin{proof} Use again the arclength parametrization $\tilde{\gamma}$. Note from Remark \ref{rem:3.5} that $\langle \boldsymbol{\kappa}, e_2 \rangle$ lies in $W^{1,1}(0,L).$

\textbf{Step 1.}  We show $k \in W^{1,1}_{loc}(0,L)$.
Now fix some point $s_0 \in (0,L)$. If $s_0 \in \widetilde{N}_\gamma$ then we infer from Lemma \ref{lem:E-L_eq_on_noncoincidence} that $k$ is analytic (and hence also $W^{1,1}$) in a neighborhood of $s_0$. Now suppose that $s_0 \in (0,L) \setminus \widetilde{N}_\gamma$. By Lemma \ref{lem:nonveriticality} we have $\langle \boldsymbol{n}(s_0), e_2\rangle \neq 0$. 
We deduce from this (since $\boldsymbol{n} \in W^{1,2}(0,L) \subset C([0,L])$) that in an open neighborhood $U$ of $s_0$ we also have that  $\langle \boldsymbol{n}, e_2 \rangle \neq 0$. Remark \ref{rem:3.5} now yields that  
  $
        k \langle \boldsymbol{n}, e_2 \rangle = \langle \boldsymbol{\kappa}, e_2 \rangle \in W^{1,1}(U).
 $ 
    Hence we find  
    \begin{equation}
        k = \frac{\langle \boldsymbol{\kappa}, e_2 \rangle}{\langle \boldsymbol{n}, e_2 \rangle} \in W^{1,1}_{loc}(U). 
    \end{equation}
    We obtain that for each point $s_0\in (0,L)$ there exists an open neighborhood $U$ of $s_0$ such that $k \in W^{1,1}(U)$.  Step 1 is complete. 
    
     \textbf{Step 2.} $k \in W^{1,\infty}_{loc}(U)$. 
    Due to the one-dimensionality of $(0,L)$ we have $W^{1,1}_{loc}(0,L) \subset C(0,L) \subset L^\infty_{loc}(0,L)$ and hence $k \in L^\infty_{loc}(0,L)$. Now note that also 
    \begin{equation}
        |\boldsymbol{\kappa}|^2 = k^2  \in L^\infty_{loc}(0,L). 
    \end{equation}
    Thereupon, Lemma \ref{lem:iteration} yields that 
    \begin{equation}\label{eq:kappa2W1infty}
        \langle \boldsymbol{\kappa}, e_2 \rangle \in W^{1,\infty}_{loc}(0,L).
    \end{equation}
    We claim now that $k \in W^{1,\infty}_{loc}(0,L)$. Indeed, let $s_0 \in (0,L).$ If $s_0 \in \widetilde{N}_\gamma$, then clearly there exists an open neighborhood $U$ of $s_0$ such that $k$ is analytic on $U$ and hence also $k \in W^{1,\infty}_{loc}(U)$. If $s_0 \in (0,L) \setminus \widetilde{N}_\gamma$, then we infer again from Lemma \ref{lem:nonveriticality} that there exists an open neighborhood $U$ of $s_0$ such  $\langle \boldsymbol{n}, e_2 \rangle \neq 0$ on $U$. Notice further that $\langle \boldsymbol{n}, e_2 \rangle \in W^{1,\infty}_{loc}(U)$ as $\partial_s \langle \boldsymbol{n}, e_2 \rangle = \langle \boldsymbol{n}', e_2 \rangle = - \langle k \boldsymbol{t}, e_2 \rangle \in L^\infty_{loc}(U)$ due to the previously obtained  $L^\infty_{loc}$-regularity of $k$ and the fact that $|\boldsymbol{t}|=|e_2|=1$. Using \eqref{eq:kappa2W1infty} we find also that 
    \begin{equation}
        k \langle \boldsymbol{n}, e_2 \rangle = \langle \boldsymbol{\kappa}, e_2 \rangle \in W^{1,\infty}_{loc}(U)
    \end{equation}
    and thus $k = \frac{\langle \boldsymbol{\kappa}, e_2 \rangle}{\langle \boldsymbol{n}, e_2 \rangle} \in W^{1,\infty}_{loc}(U)$. %This finally shows that $\kappa \in W^{1,\infty}_{loc}(0,L)$. 
    Using now the fact that $k$ is analytic in a neighborhood of $0$ and $L$ (cf.\ Lemma~\ref{lem:E-L_eq_on_noncoincidence}), we find that $k \in W^{1,\infty}(0,L)$.
 \end{proof}

As a corollary we deduce %optimal
$W^{3,\infty}$-regularity for the arclength parametrization $\tilde{\gamma}$ of local minimizers. 

\begin{corollary}\label{cor:optimal_regularity} Suppose that $\psi$ is Lipschitz continuous. 
    Let $\gamma \in A$ be a local minimizer of $\mathcal{E}_\lambda$ in $A$.  Then $\tilde{\gamma} \in W^{3,\infty}(0,L)$. 
\end{corollary}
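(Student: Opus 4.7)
The plan is to bootstrap from the $W^{1,\infty}$-regularity of the signed curvature $k$ established in the preceding lemma up to $W^{3,\infty}$-regularity for the arclength parametrization $\tilde{\gamma}$. This is essentially a chain-rule exercise combined with the Frenet-type identities for planar curves.

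First I would introduce the tangential angle $\theta:[0,L]\to\mathbf{R}$ so that $\boldsymbol{t}(s) = (\cos\theta(s), \sin\theta(s))$ and $\theta'(s) = k(s)$ for almost every $s$. Since $k \in L^\infty(0,L)$, we have $\theta \in W^{1,\infty}(0,L)$. As $(\cos, \sin)$ are globally Lipschitz, the composition yields $\boldsymbol{t} \in W^{1,\infty}(0,L;\mathbf{R}^2)$, and hence $\tilde{\gamma}' = \boldsymbol{t}$ already gives $\tilde{\gamma} \in W^{2,\infty}(0,L;\mathbf{R}^2)$. The normal $\boldsymbol{n} = R_{\pi/2}\boldsymbol{t}$ enjoys the same regularity.

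Next I would upgrade to the third derivative by using $\tilde{\gamma}'' = \boldsymbol{\kappa} = k\boldsymbol{n}$ and differentiating. Using $\boldsymbol{n}' = -k\boldsymbol{t}$, we formally obtain
\begin{equation}
\tilde{\gamma}''' = k'\boldsymbol{n} - k^2 \boldsymbol{t}.
\end{equation}
Since the previous lemma gives $k \in W^{1,\infty}(0,L)$, both $k$ and $k'$ lie in $L^\infty(0,L)$, while $\boldsymbol{t}, \boldsymbol{n}$ are bounded unit vectors. Hence the right-hand side lies in $L^\infty(0,L;\mathbf{R}^2)$. The identity is made rigorous by differentiating the $W^{1,\infty}$ product $k\boldsymbol{n}$ in the weak sense, which is justified because both factors are in $W^{1,\infty}$. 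This yields $\tilde{\gamma} \in W^{3,\infty}(0,L;\mathbf{R}^2)$, completing the proof.

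I do not anticipate any substantial obstacle here, since all the regularity work has been carried out in Lemma~\ref{lem:nonveriticality} and the curvature regularity lemma; the present statement is a direct consequence. The only point requiring a small amount of care is verifying that $W^{1,\infty}\cdot W^{1,\infty} \subset W^{1,\infty}$ with the Leibniz rule valid in the weak sense, but this is standard.
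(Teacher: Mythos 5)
Your proposal is correct and follows essentially the same route as the paper: the key identity $\tilde{\gamma}'' = k\boldsymbol{n}$ together with $k \in W^{1,\infty}(0,L)$ from the preceding lemma and $\boldsymbol{n}' = -k\boldsymbol{t} \in L^\infty(0,L)$, so that the $W^{1,\infty}$ product rule gives $\tilde{\gamma}'' \in W^{1,\infty}(0,L)$. Your detour through the tangential angle $\theta$ to obtain $\boldsymbol{t} \in W^{1,\infty}$ is harmless but unnecessary, since $\boldsymbol{t}' = k\boldsymbol{n} \in L^\infty$ already gives this directly.
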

\begin{proof}
    The previous lemma shows that $k \in W^{1,\infty}(0,L)$. Notice that on $(0,L)$ we have
    \begin{equation}\label{eq:gamma''intermsofkappa}
        \tilde{\gamma}'' = \boldsymbol{\kappa} = k \boldsymbol{n}. 
    \end{equation}
    Now $k \in W^{1,\infty}(0,L)$ by the previous lemma. Furthermore $\boldsymbol{n} \in W^{1,\infty}(0,L)$ as $\boldsymbol{n}' = - k \boldsymbol{t} \in L^\infty(0,L)$. Therefore \eqref{eq:gamma''intermsofkappa} yields that $\tilde{\gamma}'' \in W^{1,\infty}(0,L)$. The claim follows. 
\end{proof}

\begin{remark}[Optimality]
    Notice that the $W^{3,\infty}$-regularity obtained in the previous corollary is optimal. Indeed, the minimizers of $\mathcal{E}_0$ in $A_{\rm graph}$ found in \cite[Theorem 3.1]{Miura21}
    are local minimizers for the minimization problem in $A$. 
    As discussed in Remark~\ref{rem:vari-ineq}, their regularity can not be improved further in the Sobolev classes, yielding the claimed optimality. We remark that \cite[Theorem 3.1]{Miura21} actually looks only at minimizers in $A_{\rm graph} \cap A_{\rm sym}$, but the symmetry assumption is not required for small obstacles, cf.\ \cite[Theorem 2.9]{Mueller21}.
\end{remark}

\subsection{Existence of global minimizers}

%In this section we investigate study existence of global minimizers. 
If $\lambda>0$, then existence of minimizers of $\mathcal{E}_\lambda$ in $A$ follows from the
%a 
standard direct method. 

%%%%%%%%%%%%%%%%%%%%%%%%%%%%%%%%%%%%%%
\begin{lemma}\label{lem:globalminimizer_existience}
Let $\lambda>0$.
Then, there exists a solution to the following minimization problem 
\[
\min_{\gamma \in 
%M
A } \mathcal{E}_\lambda[\gamma].    
\]
\end{lemma}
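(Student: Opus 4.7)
The plan is to apply the direct method of the calculus of variations. First, I would take a minimizing sequence $\{\gamma_n\}_{n\in\mathbf{N}} \subset A$ so that $\mathcal{E}_\lambda[\gamma_n] \to \inf_{A} \mathcal{E}_\lambda$. Since the trivial line segment is admissible (which is admissible despite (A3)—the line lies below the obstacle only pointwise-wise, i.e.\ $\gamma^2(x) = 0$ compared to $\max \psi > 0$, so one should use a different admissible competitor instead, e.g.\ a sufficiently tall triangle through $(0,0)$, $(\tfrac12, 1+\max\psi)$, $(1,0)$) the infimum is finite, and in particular $\mathcal{E}_\lambda[\gamma_n]$ is bounded above, say by $M < \infty$. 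Since $\mathcal{E}_\lambda \geq \lambda L$, the length $L_n := L[\gamma_n]$ satisfies $L_n \leq M/\lambda$. On the other hand, since $\gamma_n(0)=(0,0)$ and $\gamma_n(1)=(1,0)$, we have $L_n \geq 1$. Without loss of generality one may reparametrize each $\gamma_n$ by constant speed, so that $|\gamma_n'| \equiv L_n \in [1, M/\lambda]$; this reparametrization preserves the geometric energy $\mathcal{E}_\lambda$ and keeps $\gamma_n \in A$.

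Next I would extract a bounded subsequence in $W^{2,2}(0,1;\mathbf{R}^2)$. Under constant-speed parametrization one computes $B[\gamma_n] = L_n^{-3} \int_0^1 |\gamma_n''|^2 \, \mathrm{d}x$, so from $B[\gamma_n] \leq M$ and $L_n \leq M/\lambda$ we obtain a uniform bound on $\int_0^1 |\gamma_n''|^2 \, \mathrm{d}x$. Together with $|\gamma_n'| \equiv L_n \leq M/\lambda$ and the endpoint condition, this gives a uniform $W^{2,2}$-bound. Up to a subsequence, $\gamma_n \rightharpoonup \gamma_*$ weakly in $W^{2,2}(0,1;\mathbf{R}^2)$, and by the compact embedding $W^{2,2}(0,1;\mathbf{R}^2) \hookrightarrow C^1([0,1];\mathbf{R}^2)$ also $\gamma_n \to \gamma_*$ in $C^1$. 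Simultaneously, $L_n \to L_* \in [1, M/\lambda]$ along a further subsequence.

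Then I would verify $\gamma_* \in A$. The boundary conditions $\gamma_*(0)=(0,0)$ and $\gamma_*(1)=(1,0)$ pass to the limit by uniform convergence. The obstacle condition $\gamma_*^2(x) \geq \psi(\gamma_*^1(x))$ follows from the corresponding inequality for $\gamma_n$, uniform convergence of $\gamma_n$, and continuity of $\psi$ (Assumption (A1)). The crucial point is the immersion condition: from $|\gamma_n'(x)| = L_n$ and $C^1$-convergence we get $|\gamma_*'(x)| = L_* \geq 1 > 0$ for all $x \in [0,1]$, so $\gamma_* \in W^{2,2}_{\rm imm}(0,1;\mathbf{R}^2)$. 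This is where the positivity of $\lambda$ enters decisively—it is the length penalization that prevents the minimizing sequence from degenerating (e.g.\ developing cusps or escaping to infinity), which is precisely the compactness obstruction mentioned in the introduction for the case $\lambda = 0$.

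Finally, lower semicontinuity of $\mathcal{E}_\lambda$ along $\gamma_n \to \gamma_*$ must be checked. For the length term, $L[\gamma_*] = \int_0^1 |\gamma_*'| \, \mathrm{d}x = \lim_{n\to\infty} \int_0^1 |\gamma_n'| \, \mathrm{d}x = \lim L_n = L_*$ by uniform convergence of $\gamma_n'$. For the bending term, I would use the formula $B[\gamma_*] = L_*^{-3} \int_0^1 |\gamma_*''|^2 \, \mathrm{d}x$, together with weak lower semicontinuity of $\gamma \mapsto \int_0^1 |\gamma''|^2 \, \mathrm{d}x$ and convergence $L_n \to L_*$, to conclude
\[
\mathcal{E}_\lambda[\gamma_*] \leq \liminf_{n\to\infty} \mathcal{E}_\lambda[\gamma_n] = \inf_{A} \mathcal{E}_\lambda.
\]
Combined with $\gamma_* \in A$, this yields $\mathcal{E}_\lambda[\gamma_*] = \inf_A \mathcal{E}_\lambda$. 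The main obstacle in the argument is the preservation of the immersion property in the limit, which is secured here exclusively by reparametrizing to constant speed and exploiting the two-sided bound $1 \leq L_n \leq M/\lambda$ provided by $\lambda > 0$.
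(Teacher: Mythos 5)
Your proposal is correct and follows essentially the same route as the paper: constant-speed reparametrization, the two-sided length bound $1\leq L_n\leq M/\lambda$ coming from $\lambda>0$ and the boundary conditions, the identity $B[\gamma_n]=L_n^{-3}\|\gamma_n''\|_{L^2}^2$ to get a uniform $W^{2,2}$-bound, weak $W^{2,2}$/$C^1$ convergence, preservation of admissibility and immersedness via $|\gamma_*'|=L_*\geq 1$, and weak lower semicontinuity of the bending term. The only blemish is the garbled parenthetical about finiteness of the infimum (a literal triangle is not $W^{2,2}$-immersed; a smooth competitor such as a large circular arc above the obstacle should be used), but this does not affect the argument.
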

%%%%%%%%%%%%%%%%%%%%%%%%%%%%%%%%%%%%%%
\begin{proof}
Let $\{\gamma_j\}_{j\in \mathbf{N}}\subset A$ be a minimizing sequence of $\mathcal{E}_\lambda$ in $A$, i.e., 
\begin{align}\label{eq:min_seq}
\lim_{j\to\infty}\mathcal{E}_\lambda[\gamma_j] = \inf_{\gamma \in
%M
 A} \mathcal{E}_\lambda[\gamma].
\end{align}
In particular, this ensures that there exists $C>0$ such that $\mathcal{E}_\lambda[\gamma_j] \leq C$ for all $j \in \mathbf{N}$.
%Up to 
After reparametrization, we may suppose that $\gamma_j \in A$ is of constant speed so that $|\gamma_j'|\equiv L[\gamma_j]$. 
The assumption of %constant-speed
constant speed
implies that 
\[
\|\gamma_j''\|_{L^2(0,1)}^2 = \int_0^{L[\gamma_j]}\big(L[\gamma_j]^2 |\tilde{\gamma}_j''(s)|^2 \big)L[\gamma_j]^{-1}\; \mathrm{d}s
=L[\gamma_j]^3 B[\gamma_j],
\]
%where $L_j:=L[\gamma_j]$
% and 
where $\tilde{\gamma}_j$ denotes the arclength parametrization of $\gamma_j$.
This together with $\mathcal{E}_\lambda[\gamma_j] \leq C$ yields  %the
a uniform estimate of $\|\gamma_j''\|_{L^2(0,1)}$.
Using $|\gamma_j'|\equiv L[\gamma_j] \leq C$ and the boundary condition $\gamma_j(0)=(0,0)$, we also obtain the bounds on the $W^{1,2}$ norm.
Therefore, $\{\gamma_j\}_{j\in \mathbf{N}}$ is uniformly bounded in $W^{2,2}(0,1;\mathbf{R}^2)$ so that there are $\bar{\gamma} \in W^{2,2}(0,1;\mathbf{R}^2)$ and a subsequence (without relabeling) such that $\gamma_j$ converges to $\bar{\gamma}$ in the senses of $W^{2,2}$-weak and $C^1$ topology.
Thus the limit curve $\bar{\gamma}$ also satisfies $\bar{\gamma}(0)=(0,0)$, $\bar{\gamma}(1)=(1,0)$, and $\bar{\gamma}_2(t) \geq \psi(\bar{\gamma}_1(t))$ for any $t\in[0,1]$.
In addition, by the boundary condition $L[\gamma_j]\geq % 0
1
$ holds for all $j\in \mathbf{N}$, 
so that
\begin{equation}
    |\bar{\gamma}'| = \lim_{j \rightarrow \infty} |\gamma_j'| = \lim_{j \rightarrow \infty} L[\gamma_j] = L[\bar{\gamma}] \in (0,\infty),
\end{equation}
i.e., $\bar{\gamma}$ is immersed. 
Therefore, $\bar{\gamma} \in A$. It remains to show that $\bar{\gamma}$ minimizes the energy. 
%First one readily checks that $|\bar{\gamma}'| = L[\bar{\gamma}]$.
%Up to reparameterization, we may suppose that $\bar{\gamma} \in A$ satisfies $|\bar{\gamma}'|\equiv L[\bar{\gamma}]$. 
 By the weak lower semicontinuity of the $L^2$ norm it follows that
\[
B[\bar{\gamma}] = L[\bar{\gamma}]^{-3} \|\bar{\gamma}''\|_{L^2(0,1)}^2 \leq \liminf_{j \to \infty} L[\gamma_j]^{-3} \|\gamma_j'' \|_{L^2(0,1)}^2 = \liminf_{j\to\infty}B[\gamma_j]. 
\]
Combining this with \eqref{eq:min_seq}, we see that
\[
\mathcal{E_\lambda [\bar{\gamma}]}
=B[\bar{\gamma}] + \lambda L[\bar{\gamma}]
\leq \liminf_{j \to \infty} B[\gamma_j] + \lambda \lim_{j \to \infty} L[\gamma_j]
\leq  \liminf_{j \to \infty} \mathcal{E}_\lambda[\gamma_j]
= \inf_{\gamma \in 
A
%M
} \mathcal{E}_\lambda[\gamma].
\]
The proof is complete.
\end{proof}

On the other hand, if $\lambda=0$, then there is no global minimizer of $\mathcal{E}_0=B$ in $A$. 

%%%%%%%%%%%%%%%%%%%%%%%%%%%%%%%%%%%%%%
\begin{lemma}\label{lem:3.10}
Let $\psi$ be an obstacle 
%and satisfy 
satisfying assumptions (A1)--(A3). 
Then there exists no global minimizer of $B$ in $A$ nor in $A_{\rm sym}$. 
\end{lemma}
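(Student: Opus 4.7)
The plan leverages the scaling $B[r\gamma]=r^{-1}B[\gamma]$: by using geometrically large competitors the bending energy can be driven to zero, provided the enlarged curves remain admissible. I will construct a sequence $\{\gamma_R\}_{R>1/2}\subset A_{\rm sym}\subset A$ with $B[\gamma_R]\to 0$, and then rule out $B=0$ as achievable in $A$.

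For each $R>\tfrac{1}{2}$, set $h_R:=\sqrt{R^2-\tfrac{1}{4}}$, and let $\gamma_R$ be a constant-speed parametrization on $[0,1]$ of the \emph{major arc} of the circle of radius $R$ centered at $(\tfrac{1}{2},h_R)$, going the long way around from $(0,0)$ to $(1,0)$ through the top point $(\tfrac{1}{2},h_R+R)$. The circle passes through both pinned endpoints and is invariant under $(x,y)\mapsto(1-x,y)$, so a constant-speed parametrization automatically satisfies $\gamma_R(1-t)=(1-\gamma_R^1(t),\gamma_R^2(t))$, which gives $\gamma_R\in A_{\rm sym}$ as soon as the obstacle condition is verified. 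Since the curvature is identically $1/R$ and the arclength equals $L_R=R(2\pi-2\arcsin(\tfrac{1}{2R}))$,
\[
B[\gamma_R]=\frac{L_R}{R^2}=\frac{2\pi-2\arcsin(\tfrac{1}{2R})}{R}\xrightarrow{R\to\infty}0.
\]
For admissibility at large $R$, observe that the major arc lies in the upper half-plane, with $\gamma_R^2\geq 0$ and equality only at $t\in\{0,1\}$. Parametrizing the circle as $\phi\mapsto(\tfrac{1}{2}+R\cos\phi,\,h_R+R\sin\phi)$, the condition $\gamma_R^1(t)\in[0,1]$ translates into $|\cos\phi|\leq \tfrac{1}{2R}$; on the major arc (which omits a neighborhood of $\phi=-\pi/2$) this forces $\sin\phi\geq h_R/R$, and hence $\gamma_R^2(t)\geq 2h_R$. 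Noting that $\|\psi\|_{L^\infty([0,1])}$ is finite by (A1) and compactness of $[0,1]$, once $R$ is so large that $2h_R>\|\psi\|_{L^\infty([0,1])}$, the obstacle constraint holds on all of $[0,1]$: either $\gamma_R^1(t)\notin[0,1]$, in which case $\psi(\gamma_R^1(t))<0\leq \gamma_R^2(t)$ by (A2), or $\gamma_R^1(t)\in[0,1]$ and then $\gamma_R^2(t)\geq 2h_R>\psi(\gamma_R^1(t))$.

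The preceding and $B\geq 0$ on $A$ yield $\inf_A B=\inf_{A_{\rm sym}} B=0$. To rule out achievement, suppose some $\gamma^*\in A$ had $B[\gamma^*]=0$. Then its arclength parametrization satisfies $\boldsymbol{\kappa}\equiv 0$, which combined with the pinning $\gamma^*(0)=(0,0)$, $\gamma^*(1)=(1,0)$ forces $\gamma^*$ to trace the line segment from $(0,0)$ to $(1,0)$; in particular $(\gamma^*)^2\equiv 0$. By (A3) there exists $x_0\in[0,1]$ with $\psi(x_0)>0$, and continuity together with strict monotonicity of $(\gamma^*)^1:[0,1]\to[0,1]$ (forced by the immersion condition) produces some $t_0$ with $(\gamma^*)^1(t_0)=x_0$; at this parameter $(\gamma^*)^2(t_0)=0<\psi((\gamma^*)^1(t_0))$, contradicting admissibility. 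The principal subtlety is the admissibility verification for the large arcs, which hinges on $\psi$ being confined by (A2) to the strip $\{0\leq x\leq 1\}$, so that the bulk of the major arc, lying outside this strip, is effectively unconstrained.
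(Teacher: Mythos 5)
Your proof is correct and takes essentially the same route as the paper: the paper likewise compares a hypothetical minimizer (which by (A3) cannot be the line segment, hence has $B>0$) with large circular arcs lying above the obstacle whose bending energy is at most $2\pi/n\to 0$, both in $A$ and in $A_{\rm sym}$. Your detailed admissibility check and the reformulation as ``$\inf_A B=0$ is not attained'' are just expanded versions of that argument (only note that at $t\in\{0,1\}$ one has $\gamma_R^2=0$, where admissibility follows from (A2) rather than from the bound $\gamma_R^2\geq 2h_R$).
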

%%%%%%%%%%%%%%%%%%%%%%%%%%%%%%%%%%%%%%
\begin{proof}
Assume that there exists a minimizer $\gamma$ of $B$ in $A$. 
By assumption (A3) $\gamma$ is not a line segment, accordingly $B[\gamma]>0$. 
On the other hand, let $C_n$ be a circular arc with radius $1/n$ in the upper half-plane passing through points $(0,0)$ and $(1,0)$.
Then clearly one has $C_n^2>\psi\circ C_n^1$ and $B[C_n] \leq \frac{2\pi}{n} <B[\gamma]$ for sufficiently large values of $n\in \mathbf{N}$.
This contradicts the minimality of $\gamma$ in $A$. 
In addition, since $C_n$ also belongs to $A_{\rm sym}$, one can also check that there is no global minimizer in $A_{\rm sym}$. 
\end{proof}

\section{Analysis of the case of $\lambda = 0$}\label{eq:sec4}

Let us begin this section by introducing the following 
competitors whose (local) minimality properties we will investigate in this section:
%%%%%%%%%%%%%%%%%%%%%%%%%%%%%%%%%%%%%%
\begin{definition}[Symmetric cut-and-glued free-elastica]\label{def:SCF}
We say that a planar curve $\gamma \in A$ is a \textit{symmetric cut-and-glued free-elastica} (for short SCF) if $\gamma$ satisfies the following conditions: 
\begin{align}\label{eq:def-SCF}\tag{SCF}
\begin{cases}
\text{(i) } \gamma \text{ is reflectionally symmetric}; \\
\text{(ii) } 2k'' + k^3=0 \ \text{ on } \ (0,\frac{L}{2}); \\
\text{(iii) } k(0)=0; \\
\text{(iv) } k< 0 \ \text{ on } \ (0,\frac{L}{2}); \\
\text{(v) } \tilde{\gamma}(\frac{L}{2})=(\frac{1}{2}, \psi(\frac{1}{2})), 
\end{cases} 
\end{align}
where $L=L[\gamma]$.
\end{definition}
Recall e.g.\ from \cite{Mandel} that for $s \in (0,\frac{L}{2})$ we have 
$\tilde{\gamma}(s) = \frac{1}{\alpha} R_\phi \gamma_{\rm rect}(\alpha s)$ for some $\alpha > 0$ and $\phi \in [0,2\pi)$. 
%%%%%%%%%%%%%%%%%%%%%%%%%%%%%%%%%%%%%%
\begin{remark}
    Let $\psi$ be a symmetric cone obstacle and $\gamma \in A$ be an SCF. We claim that then $\widetilde{I}_\gamma=\{\frac{L}{2}\}$. Indeed, assume that there exists some $s_0 \in \widetilde{I}_\gamma \setminus \{\frac{L}{2}\}$. Due to reflection symmetry we may assume  $s_0 < \tfrac{L}{2}$. We have $\tilde{\gamma}(s_0)  = (a,\psi(a))$ for some $a \in \mathbf{R}$. 
    As mentioned earlier, there exist $\alpha > 0$ and $\phi \in [0,2\pi)$ such that $\tilde{\gamma}(s) = \frac{1}{\alpha} R_\phi \gamma_{\rm rect}(\alpha s), s \in (0, \frac{L}{2})$. Notice that \eqref{eq:def-SCF}-(iv) implies that $\tfrac{L}{2} \leq 2\mathrm{K}(\tfrac{1}{\sqrt{2}})/(\alpha\alpha_0)$, as $k_{\rm rect}$ changes sign at %$2\mathrm{K}(\tfrac{1}{\sqrt{2}})/(\alpha\alpha_0)$.
    $2\mathrm{K}(\tfrac{1}{\sqrt{2}})/\alpha_0$ (cf.\ \eqref{eq:kappa_rect}).
    Moreover, $a \neq \frac{1}{2}$ since $\tilde{\gamma} \vert_{[0,\frac{L}{2}]}$ must be embedded (due to the fact that $\gamma_{\rm rect}$ is embedded, as one readily checks observing from \eqref{eq:rect-formula} that $(\gamma_{\rm rect}^1)'>0$ on $(0, \frac{L_{\rm rect}}{2}$)). 
    Also $a>\frac{1}{2}$ is ruled out by the explicit shape of $\gamma_{\rm rect}$. 
    Thus we may assume that $a<\frac{1}{2}$, and this implies that $\psi'(a)>0$.
    Notice that $\psi$ and $\tilde{\gamma}$ are twice differentiable at $a$ and $\psi'$ is constant in a neighborhood of $a$. This and the fact that $\tilde{\gamma}^2 - \psi \circ \tilde{\gamma}^1$ attains a local minimum at $s_0$ imply
    \begin{align}\label{eq:0404-1}
         \tilde{\gamma}^2(s_0) - \psi(\tilde{\gamma}^1(s_0))  = 0, \quad
         (\tilde{\gamma}^2)'(s_0) - \psi'(a) (\tilde{\gamma}^1)'(s_0)  = 0, \quad
         (\tilde{\gamma}^2)''(s_0) - \psi'(a) (\tilde{\gamma}^1)''(s_0)  \geq 0.
    \end{align}
Using this we have 
\begin{equation}
    k(s_0) = \langle \boldsymbol{\kappa},\boldsymbol{n} \rangle(s_0) = (\tilde{\gamma}^2)''(s_0) (\tilde{\gamma}^1)'(s_0) - (\tilde{\gamma}^1)''(s_0) (\tilde{\gamma}^2)'(s_0)  = (\tilde{\gamma}^1)'(s_0) [ (\tilde{\gamma}^2)''(s_0) - \psi'(a) (\tilde{\gamma}^1)''(s_0)].
\end{equation}
In particular, since $k(s_0)<0$ by \eqref{eq:def-SCF}-(iv) and $ (\tilde{\gamma}^2)''(s_0) - \psi'(a) (\tilde{\gamma}^1)''(s_0) \geq0$ by \eqref{eq:0404-1}, it follows that $(\tilde{\gamma}^1)'(s_0)<0$. 
This and \eqref{eq:0404-1} also imply that $(\tilde{\gamma}^2)'(s_0)=\psi'(a) (\tilde{\gamma}^1)'(s_0)<0$. 
Therefore, $\tilde{\gamma}'(s_0)$ lies in the third quadrant. 
In addition, since $\gamma_{\rm rect}'(s)$ lies in the first and fourth quadrant for any $s\in [0, 2\mathrm{K}(\frac{1}{\sqrt{2}})/\alpha_0]$, we see that $\phi > \frac{\pi}{2}$. Moreover, $(\tfrac{1}{2},\psi(\tfrac{1}{2})) = \tfrac{1}{\alpha} R_\phi (\gamma_{\rm rect}(\tfrac{\alpha L}{2}))$ and the fact that $\gamma_{\rm rect}(\tfrac{\alpha L}{2})$ lies in the first quadrant yields that $\phi \in (0,\tfrac{\pi}{2}) \cup(\frac{3\pi}{2},2\pi)$. Since $(0,\frac{\pi}{2})$ is already ruled out we only have to consider the case $(\frac{3\pi}{2},2\pi)$. Curves that satisfy \eqref{eq:def-SCF} with the restriction of $\phi \in (\frac{3\pi}{2},2\pi)$ have been studied in \cite[Equation 3.13]{Miura21}. One obtains a unique graphical shape that touches the obstacle only once at the tip, as discussed in \cite[Equation 3.10]{Miura21}. A contradiction, and hence we find that $\widetilde{I}_\gamma=\{\frac{L}{2}\}$ if $\gamma$ is an SCF. 
\end{remark}

It is already shown that if $\psi$ is a symmetric cone and satisfies $\psi(\frac{1}{2})<h_*$, where $h_*$ is the constant given by \eqref{def:h_*}, 
then there exists a unique minimizer of $\mathcal{E}_0=B$ in  $A_{\rm graph} \cap A_{\rm sym}$ (see e.g.\ \cite{Miura21}), and this %its 
unique minimizer is characterized as a symmetric cut-and-glued free-elastica. 
On the other hand, if $\psi(\frac{1}{2})\geq h_*$, then there  exists neither a symmetric cut-and-glued free-elastica nor a global minimizer of $B$ in  $A_{\rm graph}$. 

In the following we show that there exists a unique cut-and-glued free-elastica in the class $A$, not $A_{\rm graph}$. 
To this end, let us introduce the polar tangential angle of $\gamma_{\rm rect}$, defined by \eqref{eq:rect-formula}. 
Here, for an arclength parametrized smooth planar curve $\gamma:[0,L]\to\mathbf{R}^2$ with $\gamma(s)\neq0$ for all $s\in(0,L)$, the \textit{polar tangential angle} function $\omega:(0,L) \to \mathbf{R}$ is defined as a smooth function such that 
\begin{align}
R_{\omega(s)}\Big(\frac{\gamma(s)}{|\gamma(s)|}\Big)=\boldsymbol{t}(s) \ \ \text{for} \ s\in(0,L),   
\end{align}
where $L=L[\gamma]$. 
Such a polar tangential angle function exists, is unique up to shifts in $2\pi\mathbf{Z}$  and is scale invariant in the following sense: For $\alpha>0$ and $\phi\in[0,2\pi)$, let $\omega_{\alpha, \phi}$ be the polar tangential angle of $s\mapsto \frac{1}{\alpha}R_\phi \gamma(\alpha s)$. 
Then, for each $s\in[0,L]$, 
\begin{align}\label{eq:omega-scale-invariant}
    \omega_{\alpha,\phi}(s)= \omega(\alpha s) \quad (\text{mod } 2\pi\mathbf{Z}). 
\end{align}

%%%%%%%%%%%%%%%%%%%%%%%%%%%%%%%%%%%%%%
\begin{lemma}\label{lem:polar-rectangular}
Let $\omega_{\rm rect}:(0,L) \to \mathbf{R}$ be the polar tangential angle of $\gamma_{\rm rect}$, where $L=L[\gamma_{\rm rect}]$. 
Then $\omega_{\rm rect}$ is strictly decreasing in $(0,L)$. 
In addition, 
\begin{align}\label{eq:values-polar-rect}
\lim_{s \downarrow 0}\tan\big( -\omega_{\rm rect}(s) \big) =0, \quad 
\tan\big( -\omega_{\rm rect}(\tfrac{L}{2})\big) = 2h_*, \quad 
\lim_{s \uparrow L}\tan\big( -\omega_{\rm rect}(s) \big) = \infty.
\end{align}
\end{lemma}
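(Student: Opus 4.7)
My plan is to exploit the explicit formula \eqref{eq:rect-formula} for $\gamma_{\rm rect}$ together with two conservation laws of the free-elastica equation $2k''+k^3=0$ so as to obtain both the three limits and the monotonicity essentially from a single, clean reparametrization.

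\textbf{First I derive the pointwise identity} $k_{\rm rect}(s)=-\alpha_0^2\,\gamma_{\rm rect}^2(s)$ on $[0,L_{\rm rect}]$. This follows from the classical Noether currents of the free elastica: one checks that $\boldsymbol{V}:=-\tfrac{k^2}{2}\boldsymbol{t}-k'\boldsymbol{n}$ is constant along solutions of $2k''+k^3=0$, and that the scalar $\boldsymbol{V}\times\gamma-k$ is constant as well. Evaluating at $s=0$ with $\gamma_{\rm rect}(0)=0$ and $k_{\rm rect}(0)=0$, and computing $k'_{\rm rect}(0)=-\alpha_0^2$ from \eqref{eq:kappa_rect}, one gets $\boldsymbol{V}\equiv(-\alpha_0^2,0)$ and the scalar constant is zero, which yields the identity.

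\textbf{The three limits in \eqref{eq:values-polar-rect}} then follow by direct substitution. Setting $v:=\alpha_0 s-\mathrm{K}(\tfrac{1}{\sqrt 2})$ and $q=\tfrac{1}{\sqrt 2}$, one reads from \eqref{eq:rect-formula} that $\boldsymbol{t}(s)=(\cn^2(v,q),-\sqrt{2}\sn(v,q)\dn(v,q))$. At $s=L_{\rm rect}/2$ (i.e.\ $v=0$) this is $\boldsymbol{t}=(1,0)$ and $\gamma=(1/2,h_*)$ by \eqref{eq:rect-height}, so $\omega(L_{\rm rect}/2)=-\arctan(2h_*)$. For $s\to 0^+$ and $s\to L_{\rm rect}^-$, Taylor-expanding $\cn,\sn,\dn$ around $v=\mp\mathrm{K}$ (where $\cn$ vanishes linearly) shows that $\boldsymbol{t}$ and $\gamma/|\gamma|$ both approach $e_2$ in the first case, while $\boldsymbol{t}\to-e_2$ and $\gamma/|\gamma|\to e_1$ in the second, yielding $\tan(-\omega)\to 0$ and $|\tan(-\omega)|\to\infty$ respectively.

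\textbf{For the monotonicity} I would reparametrize by the tangent angle $\theta\in(-\pi/2,\pi/2)$, which is permitted by \eqref{eq:angle-rect}. Combining the identity $k=-\alpha_0^2\gamma^2_{\rm rect}$ with the energy conservation $(k')^2+k^4/4=\alpha_0^4$ and the graph representation $y=u(x)=\gamma^2_{\rm rect}$ gives $u^2\sqrt{1+(u')^2}=2/\alpha_0^2$, leading to the clean parametric description
\[
u(\theta)=\tfrac{\sqrt 2}{\alpha_0}\sqrt{\cos\theta},\qquad x(\theta)=\tfrac{1}{\sqrt 2\,\alpha_0}\int_\theta^{\pi/2}\sqrt{\cos\eta}\,d\eta=:\tfrac{G(\theta)}{\sqrt 2\,\alpha_0}.
\]
Writing $-\tan\omega=(u\cos\theta-x\sin\theta)/(x\cos\theta+u\sin\theta)=N(\theta)/D(\theta)$ with $N=2\cos^{3/2}\theta-G\sin\theta$, $D=G\cos\theta+2\sqrt{\cos\theta}\sin\theta$, and using $G'(\theta)=-\sqrt{\cos\theta}$, direct differentiation produces the striking identity $N'(\theta)=-D(\theta)$. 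Since $\theta$ is strictly decreasing in $s$ by \eqref{eq:angle-rect}, strict monotonicity of $-\tan\omega$ in $s$ reduces to verifying $\tfrac{d}{d\theta}(-\tan\omega)<0$, which via the identity $N'=-D$ is equivalent to a sign analysis of $D^2+N^2-N/\sqrt{\cos\theta}$.

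\textbf{The hard part} is this final sign check. It must hold uniformly for $\theta\in(-\pi/2,\pi/2)$ and is most delicate near the endpoints, where $\sqrt{\cos\theta}\to 0$ and $D$ and $N$ themselves degenerate; closing the argument will likely require the Beta-function identity $\alpha_0=\tfrac{1}{\sqrt 2}\mathsf{B}(\tfrac 34,\tfrac 12)$ implicit in the definition \eqref{def:h_*} of $h_*$, together with a careful asymptotic analysis of $G$ matching the boundary behavior $\tan(-\omega)\to\infty$ at $s=L_{\rm rect}^-$.
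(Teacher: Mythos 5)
Your derivation of the explicit parametrization $u(\theta)=\tfrac{\sqrt2}{\alpha_0}\sqrt{\cos\theta}$, $x(\theta)=\tfrac{1}{\sqrt2\,\alpha_0}\int_\theta^{\pi/2}\sqrt{\cos\eta}\,d\eta$ is correct (the identity $k_{\rm rect}=-\alpha_0^2\gamma_{\rm rect}^2$ is even immediate from \eqref{eq:rect-formula} and \eqref{eq:kappa_rect}, no Noether argument needed), the three limits in \eqref{eq:values-polar-rect} do follow by the substitutions you indicate, and the identities $N'=-D$, $D'=N-1/\sqrt{\cos\theta}$ check out, so that $\tfrac{d}{d\theta}(N/D)=\big(-(N^2+D^2)+N/\sqrt{\cos\theta}\big)/D^2$. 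But the proof stops exactly where the lemma's actual content begins: the positivity $N^2+D^2-N/\sqrt{\cos\theta}>0$ on $(-\tfrac\pi2,\tfrac\pi2)$ is asserted as "the hard part" and never established. This is a genuine gap, not a technicality. Indeed, unwinding your notation ($N=\sqrt2\alpha_0\langle\gamma_{\rm rect},\boldsymbol{n}\rangle$, $D=\sqrt2\alpha_0\langle\gamma_{\rm rect},\boldsymbol{t}\rangle$, $\sqrt{\cos\theta}=\alpha_0 u/\sqrt2$, $k=-\alpha_0^2u$) shows that $N^2+D^2-N/\sqrt{\cos\theta}=2\alpha_0^2\big(|\gamma_{\rm rect}|^2+\langle\gamma_{\rm rect},\boldsymbol{n}\rangle/k_{\rm rect}\big)=2\alpha_0^2\langle\gamma_{\rm rect},\epsilon\rangle$ with $\epsilon=\gamma+k^{-1}\boldsymbol{n}$ the center of the osculating circle; so your reduction is precisely the inequality \eqref{eq:polar-rect-goal} at the heart of the paper's proof (there needed only on $(\tfrac L2,L)$, the first half being quoted from the graph-case literature). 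The paper closes it by showing that the auxiliary function $\xi=\tfrac12|\gamma_{\rm rect}|^2+\langle\gamma_{\rm rect},\boldsymbol{n}\rangle/k_{\rm rect}$ is strictly decreasing with limit $\tfrac12$ at $s=L$ (via l'Hospital), a step with no counterpart in your proposal; some such argument must be supplied, since the quantity genuinely degenerates as $\theta\to\pm\tfrac\pi2$ and a soft asymptotic remark does not decide its sign on the whole interval.

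Two smaller points. First, monotonicity of $\tan(-\omega)$ only transfers to monotonicity of $\omega$ if you control the branches of $\tan$: you need $D=\sqrt2\alpha_0\langle\gamma_{\rm rect},\boldsymbol{t}\rangle\neq0$ (not obvious for $\theta<0$), or else a short patching argument using continuity of $\omega$ across the isolated zeros of $D$; as written this is not addressed. Second, at $s\uparrow L$ your argument gives $|\tan(-\omega)|\to\infty$; the stated one-sided limit $+\infty$ then needs either the (yet unproven) monotonicity or the observation that $\tan(-\omega)>0$ near $s=L$, which should be said explicitly.
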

%%%%%%%%%%%%%%%%%%%%%%%%%%%%%%%%%%%%%%
\begin{proof}
First we show monotonicity of $\omega_{\rm rect}$. 
Since monotonicity of $\omega_{\rm rect}|_{(0,\frac{L}{2}]}$ is already shown in \cite{Miura21}, it suffices to prove monotonicity of $\omega_{\rm rect}|_{[\frac{L}{2},L)}$.
 Recall from \cite{Miura21}*{Corollary 2.5} that the sign of $k_{\rm rect}(s)\omega_{\rm rect}'(s)$ coincides with that of $\langle \gamma_{\rm rect}(s), \epsilon(s) \rangle$, where $\epsilon = \gamma + k^{-1}\boldsymbol{n}$ denotes the locus of the centers of osculating circles as in \cite{Miura21}*{Section 2.1}. Since $k_{\rm rect}<0$ on $[\frac{L}{2},L)$ (cf.\ \eqref{eq:kappa_rect}), to prove that $\omega_{\rm rect}$ is strictly decreasing, it suffices to show that 
\begin{align}\label{eq:polar-rect-goal}
\langle \gamma_{\rm rect}(s), \epsilon(s) \rangle =
|\gamma_{\rm rect}(s)|^2 + \frac{\langle\gamma_{\rm rect}(s), \boldsymbol{n}(s)\rangle}{k_{\rm rect}(s)} >0 \quad \text{for all} \quad s\in( \tfrac{L}{2},L). 
\end{align}
Set
\[
\xi(s):= \frac{1}{2}|\gamma_{\rm rect}(s)|^2 + \frac{\langle\gamma_{\rm rect}(s), \boldsymbol{n}(s)\rangle}{k_{\rm rect}(s)} \quad \text{for} \quad s\in( \tfrac{L}{2},L).
\]
Then we have $\langle \gamma_{\rm rect}(s), \epsilon(s) \rangle > \xi(s)$ for any $s\in( \tfrac{L}{2},L)$, and by using the Frenet--Serret formula $\boldsymbol{n}'=-k \boldsymbol{t}$ and the fact %$(\boldsymbol{t}, \boldsymbol{n})=0$,
$\langle \boldsymbol{t}, \boldsymbol{n}\rangle=0$
we can compute 
\[
\xi'(s)=-\frac{\langle\gamma_{\rm rect}(s), \boldsymbol{n}(s)\rangle k_{\rm rect}'(s)}{k_{\rm rect}(s)^2}.
\]
By formula \eqref{eq:rect-formula} we have $\gamma_{\rm rect}^i(s)>0$ and $\boldsymbol{n}^i(s)>0$ for any $s\in (\frac{L}{2},L)$ and $i=1,2$, 
which leads to 
%$(\gamma_{\rm rect}(s), \boldsymbol{n}(s))>0$
$\langle\gamma_{\rm rect}(s), \boldsymbol{n}(s)\rangle>0$
for any $s\in (\frac{L}{2},L)$.
In addition, it follows from \eqref{eq:kappa_rect} that $k'_{\rm rect}>0$ in $(\frac{L}{2},L]$. 
Thus $\xi$ is strictly decreasing in $(\frac{L}{2},L)$, and hence, for any $s\in(\frac{L}{2},L)$, 
\[
\xi(s) \geq \lim_{s\uparrow L} \xi(s) = \frac{1}{2} + \lim_{s\uparrow L} \frac{\langle \gamma_{\rm rect}(s), -k_{\rm rect}(s)\boldsymbol{t}(s)\rangle}{k_{\rm rect}'(s)}=\frac{1}{2},
\]
where we have used l'Hospital's rule in the penultimate step.
Thus we have $\langle \gamma_{\rm rect}(s), \epsilon(s)\rangle>\frac{1}{2}$ for any $s\in(\frac{L}{2},L)$, and hence the desired monotonicity of $\omega$ follows.

Next we show \eqref{eq:values-polar-rect}. 
One can deduce from \eqref{eq:rect-formula} that $\gamma_{\rm rect}(s)=:(X(s),Y(s))$ satisfies $\lim_{s\downarrow0}\frac{X(s)}{Y(s)}=\lim_{s\downarrow0}\frac{X'(s)}{Y'(s)}=0$, which in combination with the fact that $Y>0$ on $(\frac{L}{2},L)$ gives
\[
\frac{\gamma_{\rm rect}(s)}{|\gamma_{\rm rect}(s)|}=\Big(\frac{X(s)}{\sqrt{X(s)^2+Y(s)^2}}, \frac{Y(s)}{\sqrt{X(s)^2+Y(s)^2}} \Big) \to 1 \quad \text{as} \ s\to0.
\]
On the other hand, $\boldsymbol{t}(0)=e_2$ also follows from \eqref{eq:rect-formula}, and hence we obtain $\lim_{s\downarrow0}\cos(-\omega_{\rm rect}(s)) =1$, so that $\lim_{s\downarrow0}\tan(-\omega_{\rm rect}(s)) =0$.
The second equality of \eqref{eq:values-polar-rect} immediately follows from \eqref{eq:rect-height} combined with the fact that $\boldsymbol{t}(\frac{L}{2})=e_1$. 
Finally, by \eqref{eq:rect-height} we have $\gamma_{\rm rect}(L)=(1,0)$ and $\boldsymbol{t}(L)=(0,-1)$, which leads to $\lim_{s\uparrow L}\tan(-\omega_{\rm rect}(s)) =\infty$.
\end{proof}

We apply Lemma~\ref{lem:polar-rectangular} to show existence and uniqueness of symmetric cut-and-glued free-elasticae. 

%%%%%%%%%%%%%%%%%%%%%%%%%%%%%%%%%%%%%%
\begin{lemma}[Unique existence of symmetric cut-and-glued free-elasticae]\label{lem:exsitence-SCF}
Let $\psi$ be a symmetric cone obstacle %and satisfy 
satisfying (A1)--(A3). 
Then there exists a unique symmetric cut-and-glued free-elastica %$\gamma$
$\gamma \in A$. 
In addition, $\gamma$ satisfies the following properties: 
\begin{itemize}
    \item[(i)] if $\psi(\frac{1}{2})<h_*$, then $\gamma \in A_{\rm graph}$ and $\lim_{s \uparrow \frac{L}{2}} k'(s) < 0$; 
    \item[(ii)] if $\psi(\frac{1}{2})=h_*$, then $\gamma = \gamma_{\rm rect}$ (up to reparametrization). In particular,  $k'(\tfrac{L}{2})=0$ and $\gamma \not\in A_{\rm graph}$; 
    \item[(iii)] if $\psi(\frac{1}{2})>h_*$, then $\gamma \not\in A_{\rm graph}$ and the signed curvature $k$ satisfies $\lim_{s\uparrow \frac{L}{2}}k'(s) >0$. 
\end{itemize}
Here $k$ denotes the signed curvature of $\gamma$.
\end{lemma}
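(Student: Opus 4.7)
My plan is to use the scale invariance of the polar tangential angle together with Lemma~\ref{lem:polar-rectangular} to reduce the three-parameter system (length, rescaling factor, rotation angle) to a single scalar equation, and then read off (i)--(iii) from the explicit formulas for $k_{\rm rect}$.

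First, I would observe that by symmetry and \eqref{eq:def-SCF} it is enough to determine $\tilde\gamma$ on $[0,\tfrac{L}{2}]$. On this half, $k$ solves $2k''+k^3=0$ with $k(0)=0$ and $k<0$ on the interior. The classification of free elasticae recalled before the lemma then gives
\begin{equation}
 \tilde\gamma(s)=\tfrac{1}{\alpha}R_\phi \gamma_{\rm rect}(\alpha s),\qquad s\in[0,\tfrac{L}{2}],
\end{equation}
for some $\alpha>0$, $\phi\in[0,2\pi)$, $L>0$ with $\alpha L/2\le L_{\rm rect}$. Reflection symmetry forces $\boldsymbol t(\tfrac{L}{2})\in\{\pm e_1\}$; after fixing orientation we may take $\boldsymbol t(\tfrac{L}{2})=e_1$.

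Next I would exploit the polar tangential angle. Since $\tilde\gamma(\tfrac{L}{2})=(\tfrac12,\psi(\tfrac12))$ and $\boldsymbol t(\tfrac{L}{2})=e_1$, a direct computation yields $\tan(-\omega(\tfrac{L}{2}))=2\psi(\tfrac12)$. By \eqref{eq:omega-scale-invariant} this rewrites as $\tan(-\omega_{\rm rect}(\alpha L/2))=2\psi(\tfrac12)$. Since (A3) combined with the assumption that $\psi$ is a symmetric cone gives $\psi(\tfrac12)>0$, Lemma~\ref{lem:polar-rectangular} supplies a unique $s_0=\alpha L/2\in(0,L_{\rm rect})$ solving this. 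Then $\alpha$ is fixed by matching magnitudes $\alpha=|\gamma_{\rm rect}(s_0)|/\sqrt{\tfrac14+\psi(\tfrac12)^2}$, $L=2s_0/\alpha$, and $\phi$ is the unique angle aligning $R_\phi\gamma_{\rm rect}(s_0)$ with $(\tfrac12,\psi(\tfrac12))$. Uniqueness and existence follow.

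For (i)--(iii), the second identity in \eqref{eq:values-polar-rect} says $\tan(-\omega_{\rm rect}(L_{\rm rect}/2))=2h_*$, and monotonicity of $\omega_{\rm rect}$ yields $s_0\lessgtr L_{\rm rect}/2$ as $\psi(\tfrac12)\lessgtr h_*$. Differentiating \eqref{eq:kappa_rect} gives
\begin{equation}
 k_{\rm rect}'(s)=\sqrt{2}\,\alpha_0^2\,\sn\bigl(\alpha_0 s-\mathrm K(\tfrac{1}{\sqrt2}),\tfrac{1}{\sqrt2}\bigr)\dn\bigl(\alpha_0 s-\mathrm K(\tfrac{1}{\sqrt2}),\tfrac{1}{\sqrt2}\bigr),
\end{equation}
whose sign is that of $\sn(\alpha_0 s-\mathrm K)$, which is negative for $s<L_{\rm rect}/2$, zero at $s=L_{\rm rect}/2$, and positive for $s\in(L_{\rm rect}/2,L_{\rm rect})$; since $k'(\tfrac{L}{2})=\alpha^2 k_{\rm rect}'(s_0)$ this produces the three sign statements. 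The boundary case $\psi(\tfrac12)=h_*$ forces $s_0=L_{\rm rect}/2$, $\alpha=1$, $\phi=0$, so $\tilde\gamma=\gamma_{\rm rect}$ up to the symmetry of $\gamma_{\rm rect}$ itself.

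For the graphicality alternatives I would use $\phi=-\theta_{\rm rect}(s_0)$ (forced by $\boldsymbol t(\tfrac{L}{2})=e_1$) and compute $(\tilde\gamma^1)'(s)=\cos(\phi+\theta_{\rm rect}(\alpha s))$. In case (i), $\theta_{\rm rect}(s_0)\in(0,\tfrac{\pi}{2})$ by \eqref{eq:angle-rect}, so $\phi+\theta_{\rm rect}(\alpha s)\in[0,\tfrac{\pi}{2}-\theta_{\rm rect}(s_0)]\subset(-\tfrac{\pi}{2},\tfrac{\pi}{2})$ for $s\in[0,\tfrac{L}{2}]$, giving strict monotonicity of $\tilde\gamma^1$ and nonvertical endpoint tangent, hence $\gamma\in A_{\rm graph}$ after symmetric reflection. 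In case (iii), $\phi>0$, so $(\tilde\gamma^1)'(0)=\cos(\phi+\tfrac{\pi}{2})=-\sin\phi<0$ and the curve is not graphical. In case (ii), $\gamma=\gamma_{\rm rect}$ has vertical tangent at $(0,0)$, and a Taylor expansion using $\tilde\gamma''(0)=k(0)\boldsymbol n(0)=0$ and $\tilde\gamma'''(0)=k'(0)\boldsymbol n(0)$ gives $\tilde\gamma^1(s)\sim cs^3$, so the associated graph satisfies $u(x)\sim x^{1/3}$ and $u'\notin L^2$, ruling out $A_{\rm graph}$. The main technical point is carefully using the scale invariance together with the sign and magnitude information of $\phi$; once this is in place, the rest reduces to the explicit elliptic-function formulas already assembled in Section~\ref{sec:penalizedPinnedElasticae}.
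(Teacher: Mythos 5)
Your proposal follows essentially the same route as the paper: reduce to the representation $\tilde\gamma(s)=\frac{1}{\alpha}R_\phi\gamma_{\rm rect}(\alpha s)$ on the half-interval, use the scale invariance of the polar tangential angle together with Lemma~\ref{lem:polar-rectangular} to pin down the cut point $\alpha L/2$ (and hence $\alpha$, $\phi$, $L$) uniquely from $\tan(-\omega_{\rm rect})=2\psi(\tfrac12)$, and then read off the sign of $k'(\tfrac{L}{2})$ and the graph/non-graph alternatives from the explicit elliptic formulas, with the trichotomy $s_0\lessgtr L_{\rm rect}/2$ corresponding to $\psi(\tfrac12)\lessgtr h_*$. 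The only differences are cosmetic: you make the non-graphicality in cases (ii) and (iii) explicit (vertical tangent/Taylor expansion, respectively $(\tilde\gamma^1)'(0)<0$), which the paper leaves implicit, and your sign computation $k_{\rm rect}'>0$ on $(L_{\rm rect}/2,L_{\rm rect})$ is the correct one needed for statement (iii).
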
 
%%%%%%%%%%%%%%%%%%%%%%%%%%%%%%%%%%%%%%

\begin{proof}
\textbf{Step 1} (\textsl{We prove existence of symmetric cut-and-glued free-elasticae})\textbf{.}
By Lemma~\ref{lem:polar-rectangular}, there exists $s_*\in(0,L[\gamma_{\rm rect}])$ such that $\tan{(-\omega_{\rm rect}}(s_*))=2\psi(\frac{1}{2})$. 
We also choose $\phi\in(-\frac{\pi}{2}, \frac{\pi}{2})$ such that $R_\phi\gamma_{\rm rect}'(s_*)=e_1$. 
In addition, let $\alpha>0$ be a scaling factor to satisfy
\begin{align}\label{eq:construct-SCF}
    \frac{1}{\alpha}R_\phi\gamma_{\rm rect}(\alpha s_*)=(\tfrac{1}{2},\psi(\tfrac{1}{2})).
\end{align} 
Finally define $\gamma_*:[0,s_*]\to\mathbf{R}^2$ by $\gamma_*(s):=\frac{1}{\alpha}R_\phi\gamma_{\rm rect}(\alpha s)$, and  
consider the reflectionally symmetric constant-speed parametrized curve $\gamma:[0,1]\to\mathbf{R}^2$ uniquely given by $\gamma|_{[0,1/2]}=\gamma_*$ (up to reparametrization).
Then, property \eqref{eq:def-SCF}-(i) in Definition \ref{def:SCF} immediately follows and properties \eqref{eq:def-SCF}-(ii)--(iv) are inherited from
$\gamma_{\rm rect}$. 
In addition, \eqref{eq:construct-SCF} means that $\gamma$ satisfies property  \eqref{eq:def-SCF}-(v) of Definition \ref{def:SCF}, and hence $\gamma$ is a symmetric cut-and-glued free-elasticae. 

\textbf{Step 2} (\textsl{We prove uniqueness})\textbf{.}
If $\gamma$ is a symmetric cut-and-glued free-elastica, then combining \eqref{eq:def-SCF}-(ii) with \cite[Proposition 3.3]{Lin96}, we see that the signed curvature $k$ of $\gamma$ is given by 
\begin{align}\label{eq:241227-1}
    k(s)=-\sqrt{2}\alpha\cn(\alpha s +\beta,\tfrac{1}{\sqrt{2}}) \quad s\in[0,\tfrac{L}{2}]
\end{align}
for some $\alpha\geq0$ and $\beta\in\mathbf{R}$.
In addition, by \eqref{eq:def-SCF}-(iii) and (iv), we may choose $\beta=\mathrm{K}(\frac{1}{\sqrt{2}})$.
Note that $\alpha=0$ is ruled out by symmetry of $\gamma$ and \eqref{eq:def-SCF}-(iv). 
Thus we have 
\begin{align}\label{eq:kappa_formula_half}
    k(s)=\alpha k_{\rm rect}(\alpha s), \quad s\in [0,\tfrac{L}{2}], 
\end{align} 
which implies that, for some $\phi\in [0,2\pi)$, 
\begin{align}\label{eq:SCF-unique-formula}
\tilde{\gamma}(s)=\frac{1}{\alpha}R_\phi\gamma_{\rm rect}(\alpha s), \quad s\in[0,\tfrac{L}{2}]. 
\end{align}
Let $\omega:(0,L)\to \mathbf{R}$ and $\omega_{\rm rect}:(0,L_{\rm rect})\to \mathbf{R}$ denote the polar tangential angle of $\tilde{\gamma}$ and $\gamma_{\rm rect}$, respectively, where $L=L[\gamma]$ and $L_{\rm rect}:=L[\gamma_{\rm rect}]$.
Note that $\tan{(-\omega(\frac{L}{2}))}=\tan{(-\omega_{\rm rect}(\frac{\alpha L}{2}))}$ follows from \eqref{eq:SCF-unique-formula} and the scale
invariance of $\omega_{\rm rect}$ (cf.\ \eqref{eq:omega-scale-invariant}). 
By \eqref{eq:def-SCF}-(v), we have $\tan{(-\omega(\tfrac{L}{2}) )}=2\psi(\tfrac{1}{2})$.
On the other hand, by Lemma~\ref{lem:polar-rectangular} we can find an $\ell\in(0,L_{\rm rect})$ such that 
\begin{align}\label{eq:ell-psi-1to1}
    \tan{\big(-\omega_{\rm rect}(\ell) \big)}=2\psi(\tfrac{1}{2}). 
\end{align}
Note that this $\ell\in(0,L_{\rm rect})$ is uniquely determined by $\psi(\frac{1}{2})$.
This together with $2\psi(\tfrac{1}{2})=\tan{(-\omega(\frac{L}{2}))}=\tan{(-\omega_{\rm rect}(\frac{\alpha L}{2}))}$ yields 
\begin{align}\label{eq:SCF-unique-formula-alpha-L}
\frac{\alpha L}{2}=\ell.
\end{align}
Combining this with \eqref{eq:def-SCF}-(i) and \eqref{eq:SCF-unique-formula}, we deduce that 
\begin{align}\label{eq:SCF-unique-formula-phi}
e_1 = \tilde{\gamma}'(\tfrac{L}{2}) = R_\phi \gamma_{\rm rect}'(\tfrac{\alpha L}{2}) = R_\phi \gamma_{\rm rect}'(\ell). 
\end{align}
Moreover, \eqref{eq:def-SCF}-(v) and \eqref{eq:SCF-unique-formula} yield
\begin{align}\label{eq:SCF-unique-formula-alpha}
(\tfrac{1}{2}, \psi(\tfrac{1}{2}))=\tilde{\gamma}(\tfrac{L}{2})=\frac{1}{\alpha} R_{\phi}\gamma_{\rm rect}(\ell).
\end{align}
Since $\ell\in(0,L_{\rm rect})$ is unique, we observe from \eqref{eq:SCF-unique-formula-alpha-L}--\eqref{eq:SCF-unique-formula-alpha} that $\alpha>0$, $\phi\in[0,2\pi)$, and $L>0$ in \eqref{eq:SCF-unique-formula} are uniquely determined by $\psi(\frac{1}{2})$. 
Therefore if a symmetric cut-and-glued free-elastica exists, then it is unique. 

\textbf{Step 3} (\textsl{We deduce the claimed properties of symmetric cut-and-glued free-elasticae})\textbf{.}
Let $\gamma$ be a symmetric cut-and-glued free-elastica. 
Recall from Step~2 that the arclength parametrization $\tilde{\gamma}$ is given by \eqref{eq:SCF-unique-formula} with  $\alpha>0$ and $\phi\in [0,2\pi)$ satisfying \eqref{eq:SCF-unique-formula-alpha-L}--\eqref{eq:SCF-unique-formula-alpha}.
In the following we divide the proof into three cases.

First, suppose that $\psi(\frac{1}{2})<h_*$. 
Let $\theta$ be a tangential angle of $\gamma$, i.e., $\theta \in C^0([0,L])$ is such that $\tilde{\gamma}'= (\cos \theta, \sin \theta)$. 
By \eqref{eq:ell-psi-1to1} and Lemma~\ref{lem:polar-rectangular}, we have $0 < \ell < \frac{L_{\rm rect}}{2}$, and this and \eqref{eq:kappa_rect} yield $\langle\gamma_{\rm rect}'(\ell), e_i\rangle>0$ for $i=1,2$. 
Then, it follows from \eqref{eq:SCF-unique-formula-phi} that $\phi\in (-\frac{\pi}{2},0)$, so that by \eqref{eq:SCF-unique-formula} $\theta(0)-2\pi m \in (0,\frac{\pi}{2})$ for some $m\in \mathbf{Z}$; without loss of generality we may assume $\theta(0) \in (0,\frac{\pi}{2})$. 
Here note from \eqref{eq:def-SCF}-(i) that $\theta(\frac{L}{2})=0$  and by \eqref{eq:def-SCF}-(iv) we have $k< 0$ on $(0,L/2)$.
% Moreover,combining \eqref{eq:kappa_formula_half} with the fact that $k_{\rm rect} <0$ in $(0,\ell)$, we have $k<0$ in $(0,\frac{L}{2})$. 
Since by \eqref{eq:SCF-unique-formula}  one has $\tilde{\gamma} \in C^\infty([0,L/2])$, one can easily check that $\theta \in C^\infty([0,L/2])$ and $\theta'= k < 0$. This implies that $\theta$ is strictly decreasing in $(0,\frac{L}{2})$, and this together with $\theta(0)\in (0,\frac{\pi}{2})$ and $\theta(\frac{L}{2})=0$ gives $0\leq\theta(s) <\pi/2$ for all $s\in[0,L/2]$, and hence $\gamma \in A_{\rm graph}$.

Next we consider the case $\psi(\frac{1}{2})=h_*$. 
Then, by \eqref{eq:values-polar-rect} we have $\ell=\frac{L_{\rm rect}}{2}$. 
Since $\gamma_{\rm rect}'(\frac{L_{\rm rect}}{2})=e_1$, we deduce from \eqref{eq:SCF-unique-formula-phi} that $\phi=0$. 
Furthermore, in view of \eqref{eq:rect-height} and \eqref{eq:SCF-unique-formula-alpha}, we see that $\alpha=1$. 
Thus, by \eqref{eq:SCF-unique-formula}, we have $\tilde{\gamma}=\gamma_{\rm rect}$.  
Thus $k'(\frac{L}{2})=k_{\rm rect}'(\frac{L_{\rm rect}}{2})=0$.

Finally we turn to the case $\psi(\frac{1}{2})>h_*$. 
By \eqref{eq:values-polar-rect} we have $\ell>\frac{L_{\rm rect}}{2}$, which means $k_{\rm rect}'(\ell)<0$.
On the other hand, in view of \eqref{eq:kappa_formula_half} and \eqref{eq:SCF-unique-formula-alpha-L}, we see that 
\begin{align}
    \lim_{s \uparrow \frac{L}{2}} k'(s) 
    = \alpha k_{\rm rect}'(\ell). 
\end{align}
The proof is complete.
\end{proof}

Next, we investigate relationship between the variational inequality and symmetric cut-and-glued free-elasticae. 
%%%%%%%%%%%%%%%%%%%%%%%%%%%%%%%%%%%%%%
\begin{lemma}\label{lem:vari-ineq-and-SCF}
Let $\gamma \in A$ be a symmetric cut-and-glued free-elastica and $L:=L[\gamma]$. 
Then, $\gamma$ satisfies \eqref{eq:vari-ineq-explict} with $\lambda=0$ if and only if $\lim_{s \uparrow \frac{L}{2}} k'(s) \leq0$. 
\end{lemma}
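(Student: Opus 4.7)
The plan is to insert the SCF into the variational inequality \eqref{eq:vari-ineq-explict} with $\lambda=0$, split the integral at $s=L/2$, integrate by parts on each side separately, and use \eqref{eq:def-SCF}-(ii) together with the reflection symmetry in \eqref{eq:def-SCF}-(i) to reduce the whole inequality to a single boundary contribution at $L/2$ whose sign is $-4\bigl(\lim_{s\uparrow L/2}k'(s)\bigr)\varphi(L/2)$. Since $\varphi(L/2)\in[0,\infty)$ can be chosen arbitrarily, the equivalence then follows at once.

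First I would collect the symmetry data at $s=L/2$. The reflection identity $\tilde\gamma(L-s)=(1-\tilde\gamma^1(s),\tilde\gamma^2(s))$ from \eqref{eq:def-SCF}-(i) (cf.\ the proof of Lemma~\ref{lem:exsitence-SCF}) yields, upon differentiation, $\boldsymbol{t}(L/2)=e_1$ and hence $\boldsymbol{n}(L/2)=e_2$, as well as $k(L-s)=k(s)$ and $\lim_{s\downarrow L/2}k'(s)=-\lim_{s\uparrow L/2}k'(s)$. In particular $k$ and $\boldsymbol{\kappa}=k\boldsymbol{n}$ extend continuously across $L/2$, while $k'$ generally jumps; moreover $\langle\boldsymbol{t}(L/2),e_2\rangle=0$ and $\langle\boldsymbol{\kappa}(L/2),e_2\rangle=k(L/2)$.

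Next I would evaluate the left-hand side of \eqref{eq:vari-ineq-explict} with $\lambda=0$ by splitting the integral into $(0,L/2)$ and $(L/2,L)$ and integrating by parts twice in the $\varphi''$-term and once in the $\varphi'$-term. A standard computation using $\partial_s\boldsymbol{t}=k\boldsymbol{n}$ and $\partial_s\boldsymbol{n}=-k\boldsymbol{t}$ shows that the resulting interior integrand collapses to $(2k''+k^3)\langle\boldsymbol{n},e_2\rangle\varphi$, which vanishes on each half by \eqref{eq:def-SCF}-(ii). All boundary terms at $s=0,L$ vanish because $\varphi$ has compact support in $(0,L)$. At $s=L/2$, the $\varphi'(L/2)$-contributions cancel by continuity of $\langle\boldsymbol{\kappa},e_2\rangle$, and the $\varphi(L/2)$-contributions produced by $-3|\boldsymbol{\kappa}|^2\langle\boldsymbol{t},e_2\rangle\varphi'$ vanish because $\langle\boldsymbol{t}(L/2),e_2\rangle=0$. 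The only surviving contribution comes from the jump of $\langle\partial_s\boldsymbol{\kappa},e_2\rangle = k'\langle\boldsymbol{n},e_2\rangle - k^2\langle\boldsymbol{t},e_2\rangle$ at $L/2$, which equals $-2\lim_{s\uparrow L/2}k'(s)$. Tracking signs carefully one arrives at
\begin{equation*}
\int_0^L \Big(2\langle\boldsymbol{\kappa},e_2\rangle\varphi'' - 3|\boldsymbol{\kappa}|^2\langle\boldsymbol{t},e_2\rangle\varphi'\Big)\,\mathrm{d}s = -4\Big(\lim_{s\uparrow L/2}k'(s)\Big)\varphi(L/2),
\end{equation*}
so the variational inequality becomes $-4\bigl(\lim_{s\uparrow L/2}k'(s)\bigr)\varphi(L/2)\geq 0$ for every $\varphi\in C_{\rm c}^\infty(0,L)$ with $\varphi\geq 0$. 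Since $\varphi(L/2)\geq 0$ can be chosen strictly positive, this is equivalent to $\lim_{s\uparrow L/2}k'(s)\leq 0$.

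The main obstacle I expect is the careful bookkeeping of boundary contributions at $L/2$: although $\tilde\gamma$ is $C^1$ across $L/2$ and $k$ extends continuously there, $k'$ has a genuine jump, so one must cleanly identify which of the many boundary terms arising from both sides cancel (via continuity of $\boldsymbol{\kappa}$ and horizontality of $\boldsymbol{t}(L/2)$) and which survive. A single sign error or miscounted factor of two would flip the resulting equivalence, so the sign tracking in the final identity is the delicate step.
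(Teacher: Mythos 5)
Your proposal is correct and follows essentially the same route as the paper's proof: split the integral at $s=L/2$, integrate by parts on each half using \eqref{eq:def-SCF}-(ii) to kill the interior terms, use $\boldsymbol{t}(\frac{L}{2})=e_1$, $\boldsymbol{n}(\frac{L}{2})=e_2$ and the symmetry relation $\lim_{s\downarrow L/2}k'(s)=-\lim_{s\uparrow L/2}k'(s)$ to reduce everything to the identity $\int_0^L(2\langle\boldsymbol{\kappa},e_2\rangle\varphi''-3|\boldsymbol{\kappa}|^2\langle\boldsymbol{t},e_2\rangle\varphi')\,\mathrm{d}s=-4\bigl(\lim_{s\uparrow L/2}k'(s)\bigr)\varphi(\frac{L}{2})$, and conclude by choosing $\varphi(\frac{L}{2})>0$. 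The sign and factor bookkeeping in your final identity matches the paper exactly.
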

%%%%%%%%%%%%%%%%%%%%%%%%%%%%%%%%%%%%%%
\begin{proof}
Fix an arbitrary $\varphi\in C^\infty_{\rm c}(0,L)$ with $\varphi\geq0$ in $[0,L]$. 
Noting symmetry of $\gamma$ and the fact that $k$ is smooth on $(0,L/2)$ (cf.\ \eqref{eq:def-SCF}-(ii)), we integrate by parts to compute
\begin{align}
    &\int_0^{\frac{L}{2}}\Big(2\langle\boldsymbol{\kappa},e_2\rangle\varphi''-3|\boldsymbol{\kappa}|^2\langle\boldsymbol{t},e_2\rangle\varphi'\Big)\; \mathrm{d}s \\
    = & \int_0^{\frac{L}{2}}\big(2{k}''+k^3 \big)\langle\boldsymbol{n},e_2\rangle\varphi \; \mathrm{d}s 
    + \Big[ 2k\langle\boldsymbol{n},e_2\rangle\varphi' -2k'\langle\boldsymbol{n},e_2\rangle\varphi -k^2\langle\boldsymbol{t},e_2\rangle\varphi \Big]_{s=0}^{s=\frac{L}{2}} 
    \\  =& -2\Big(\lim_{s\uparrow\frac{L}{2}}k'(s)\Big)\varphi(\tfrac{L}{2}) + 2 k(\tfrac{L}{2})\varphi'(\tfrac{L}{2})  , 
\end{align}
where in the last equality we used \eqref{eq:def-SCF}-(ii), (iii), $\boldsymbol{t}(\frac{L}{2})=e_1$ and $\boldsymbol{n}(\frac{L}{2})=e_2$. 
Since $\gamma$ is symmetric, along the same argument we have 
\begin{align}
    &\int_{\frac{L}{2}}^L\Big(2\langle\boldsymbol{\kappa},e_2\rangle\varphi''-3|\boldsymbol{\kappa}|^2\langle\boldsymbol{t},e_2\rangle\varphi'\Big)\; \mathrm{d}s = 2\Big(\lim_{s\downarrow\frac{L}{2}}k'(s)\Big)\varphi(\tfrac{L}{2}) - 2 k(\tfrac{L}{2})\varphi'(\tfrac{L}{2})  
\end{align}
Since $\lim_{s\downarrow\frac{L}{2}}k'(s) = -\lim_{s\uparrow\frac{L}{2}}k'(s)$ follows from symmetry, we have
\[
\int_0^L \Big(2\langle\boldsymbol{\kappa},e_2\rangle\varphi''-3|\boldsymbol{\kappa}|^2\langle\boldsymbol{t},e_2\rangle\varphi' 
\Big)\; \mathrm{d}s
= -4\Big(\lim_{s\uparrow\frac{L}{2}}k'(s) \Big)\varphi(\tfrac{L}{2}). 
\]
The proof is complete.
\end{proof}

\subsection{Stability analysis for symmetric cut-and-glued free-elasticae}

We are in a position to prove Theorem~\ref{thm:destabilization-SCF}.
For the critical case $\psi(\frac{1}{2})=h_*$, we employ a similar trick to \cite{MY_Crelle}.

%%%%%%%%%%%%%%%%%%%%%%%%%%%%%%%%%%%%%%
\begin{proof}[Proof of Theorem~\ref{thm:destabilization-SCF}]
Throughout this proof let $\gamma$ denote a symmetric cut-and-glued free-elastica.
First we consider the case $\psi(\frac{1}{2})<h_*$. 
By Lemma~\ref{lem:exsitence-SCF}-(i), $\gamma$ is given by a  graph of a function; in particular, $\gamma:=(X,Y)$ satisfies $X'(x)>0$ for any $x\in[0,1]$.
Take an arbitrary sequence $\{\gamma_n\}_{n\in \mathbf{N}} \subset A_{\rm sym}$ such that $\gamma_n:=(X_n,Y_n) \to \gamma$ in $W^{2,2}(0,1;\mathbf{R}^2)$, and hence also in $C^1([0,1];\mathbf{R}^2)$. 
Then, $X_n'>0$ on $[0,1]$ for all large $n\in \mathbf{N}$, which implies that $\gamma_n \in A_{\rm graph}$ for all large $n$.
Therefore, this together with the fact that $\gamma$ is a global minimizer of $B$ in $A_{\rm graph} \cap A_{\rm sym}$ (cf.\ \cite{Ysima}*{Theorem~1.2}), $B[\gamma] \leq B[\gamma_n]$ holds for all large $n$. 
Hence $\gamma$ is a local minimizer in $A_{\rm sym}$. 
On the other hand, $\gamma$ is not a global minimizer since there  exists no global minimizer in $A_{\rm sym}$ (cf.\ Lemma~\ref{lem:3.10}).

Next we consider the case $\psi(\frac{1}{2})=h_*$.
Let us recall from Lemma~\ref{lem:exsitence-SCF} that in this case  $\tilde{\gamma}=\gamma_{\rm rect}$.
%Suppose on contrary
For a contradiction assume that $\gamma$ is a local minimizer of $B$ in $A$. 
Let $L:=L[\gamma]$.
For each $n \in \mathbf{N}$, we define the arclength parametrized curve $\gamma_n:[0,L+2/n]\to\mathbf{R}^2$ by
\[
\gamma_n:=\gamma^{n,+}_{\rm seg}\oplus  \gamma_{\rm rect}\oplus \gamma^{n,-}_{\rm seg}, 
\]
where 
$\gamma^{n,\pm}_{\rm seg}(s)= (0,\pm s)$ for $s\in[0,\tfrac{1}{n}]$. 
Then, by Lemma~\ref{lem:exsitence-SCF}-(ii) combined with the fact that $\gamma_{\rm rect}'(0)=e_2$, we see that $\gamma_n$ is of class $C^1$. 
It also follows by construction that $\gamma_n^2> \psi\circ\gamma_n^1$ in $[0,L+2/n]$, and hence, up to reparametrization, $\gamma_n \in A$; in particular the coincidence set for $\gamma_n$ is empty, i.e.,  $I_{\gamma_n}=\emptyset$. 
Moreover, since $B[\gamma_n]=B[\gamma]$ for any $n\in\mathbf{N}$ and since $\gamma$ is a local minimizer of $B$ in $A$, we see that for suitably large $n \in \mathbf{N}$ $\gamma_n$ is also a local minimizer in $A$.
Therefore, combining Lemma~\ref{lem:E-L_eq_on_noncoincidence} with $I_{\gamma_n}=\emptyset$, we see that the signed curvature $k_n$ of $\gamma_n$ is analytic on $(0,L+2/n)$.
On the other hand, $k_n$ is not continuously differentiable in any neighborhood of %at
$s={1}/{n}$: In fact, by \eqref{eq:kappa_rect},
\[
\lim_{s\uparrow \frac{1}{n}} k_n'(s) =0, \quad \lim_{s\downarrow \frac{1}{n}} k_n
'(s) = k_{\rm rect}'(0) = -\sqrt{2}\alpha_0^2 <0,
\] 
where $\alpha_0=4\mathrm{E}(\tfrac{1}{\sqrt{2}})-2\mathrm{K}(\tfrac{1}{\sqrt{2}})$.
This contradicts the analyticity of $k_n$. 
Thus $\gamma$ can not be a local minimizer of $B$ in  $A$. 
This argument also implies that $\gamma$ is not a local minimizer in $A_{\rm sym}$. 

We now address the remaining case $\psi(\frac{1}{2})>h_*$. If a symmetric cut-and-glued free-elastica were a local minimizer, then it would solve %be a solution to 
the variational inequality (cf.\ Lemma~\ref{lem:vari-ineq}). 
However, this is a contradiction in view of Lemma~\ref{lem:exsitence-SCF}-(iii) and Lemma~\ref{lem:vari-ineq-and-SCF}.
\end{proof}

%%%%%%%%%%%%%%%%%%%%%%%%%%%%%%%%%%%%%%
\begin{remark}
   Consider again a symmetric cone obstacle $\psi$ that satisfies (A1)--(A3).
   \begin{itemize}
    \item[(i)] If $\psi$ has the critical height $\psi(\frac{1}{2})=h_*$, the unique symmetric cut-and-glued free-elastica $\gamma \in A$ is then not a local minimizer by Theorem~\ref{thm:destabilization-SCF}. However, it surprisingly solves the variational inequality! Indeed, this is readily checked with Lemma~\ref{lem:vari-ineq-and-SCF} combined with Lemma~\ref{lem:exsitence-SCF}-(ii).
   The existence of such a nonminimizing solution of the variational inequality reveals once more the nonconvex nature of the bending energy $B$. 
    \item[(ii)] Notice that below the critical height $h_*$, it has been shown that all solutions of the variational inequality in $A_{\rm graph} \cap A_{\rm sym}$ are minimizers. Even more holds true: There exists only one solution of the variational inequality in $A_{\rm graph} \cap A_{\rm sym}$, cf. \cite[Lemma 6.6]{Mueller21}.  
    \end{itemize} 
\end{remark}
%%%%%%%%%%%%%%%%%%%%%%%%%%%%%%%%%%%%%%

\section{Nontouching minimizers for small $\lambda > 0$}\label{sect:Nontouching_small_lambda}

In this section we prove Theorem~\ref{thm:hitting-escaping}, i.e., for all sufficiently small $\lambda>0$ minimizers of $\mathcal{E}_\lambda$ in $A_{\rm sym}$ do not touch a fixed symmetric cone obstacle $\psi$. 

We remark that it is not possible to estimate the energy of curves touching the obstacle from below for any small $\lambda> 0$. Therefore, further analysis which relies on properties of minimizers is required.
%%%%%%%%%%%%%%%%%%%%%%%%%%%%%%%%%%%%%%

\begin{remark}\label{rem:escaping-circular_arcs}
Let $\{\lambda_n\}_{n\in \mathbf{N}} \subset (0,\infty)$ be an arbitrary sequence such that $\lambda_n \to 0$ ($n\to\infty$). 
For each $n\in \mathbf{N}$ we construct a family $\{C_n\}_{n\in \mathbf{N}}$ as follows: 
% Let $C_{n,1}:[0,1]\to\mathbf{R}^2$ be a circular arc that passes through $(0,0)$ and is tangent to $\psi|_{(-\infty,\frac{1}{2}]}$ with the radius $\lambda_n^{-\frac{1}{2}}$ (such a circle uniquely exists at least for $n$ large enough), and $C_{n,1}(0)=(0,0)$ and $C_{n,1}'(1)$ is parallel to $e_2$.
For sufficiently large $n\in\mathbf{N}$, we consider the circle with the radius $\lambda_n^{-\frac{1}{2}}$ and the center $(X_n,Y_n)$ given by
\[
X_n=-\frac{a_\psi}{a_\psi^2+1}\Big(b_\psi+\frac{1}{\sqrt{\lambda_n}}\sqrt{a_\psi^2+1}\Big) - \frac{1}{a_\psi^2+1}\bigg(-b_\psi^2-2b_\psi\frac{1}{\sqrt{\lambda_n}}\sqrt{a_\psi^2+1}\bigg)^{\frac{1}{2}}, 
\]
\[
Y_n=\frac{1}{a_\psi^2+1}\Big(b_\psi+\frac{1}{\sqrt{\lambda_n}}\sqrt{a_\psi^2+1}\Big) - \frac{a_\psi}{a_\psi^2+1}\bigg(-b_\psi^2-2b_\psi\frac{1}{\sqrt{\lambda_n}}\sqrt{a_\psi^2+1}\bigg)^{\frac{1}{2}}, 
\]
where $a_\psi:=\psi'(0)>0$ and $b_\psi:=\psi(0)<0$.
This circle passes through $(0,0)$ and is tangent to $\psi|_{(-\infty,\frac{1}{2}]}$, which can be readily checked using the affine linearity of $\psi$. 
Among this circle, let $C_{n,1}:[0,1]\to\mathbf{R}^2$ denote the circular arc that satisfies $C_{n,1}(0)=(0,0)$ and $C_{n,1}'(1)$ is parallel to $e_2$.
We also define $C_{n,2}: [0,1] \to \mathbf{R}^2$ by a circular arc such that $C_{n,2}(0)=C_{n,1}(1)$, $C_{n,2}'(0)$ is parallel to $e_2$, 
$C_{n,2}^1(1)=\frac{1}{2}$, and $C_{n,2}'(1)$ is parallel to $e_1$. For sufficiently large $n$ there holds $C_{n,2}^2(1) > \psi(\frac{1}{2})$ and the radius $r_n$ of $C_{n,2}$ satisfies 
\[\lambda_n^{-\frac{1}{2}}\leq r_n\leq 2 \lambda_n^{-\frac{1}{2}}+ \tfrac{1}{2},
\]
where the last inequality follows from the fact that %\tealsout{the center $(X_n, Y_n)$ of the circular $C_{n,1}$ satisfies} 
$Y_n >0$ for all large $n\in \mathbf{N}$ (see also Figure  \ref{fig:escaping}).
We define $C_n:[0,1]\to\mathbf{R}^2$ to be a reflectionally symmetric curve such that $C_n|_{[0,\frac{1}{2}]}$ coincides with $C_{n,1}\oplus C_{n,2}$ up to reparametrization (see also Figure~\ref{fig:escaping}). 
By definition we see that 
\begin{align}
\frac{1}{2} \mathcal{E}_{\lambda_n}[C_n] &= B[C_{n,1}] + B[C_{n,2}] + \lambda_n \big( L[C_{n,1}] + L[C_{n,2}] \big) \\
&\leq 2\pi \sqrt{\lambda_n} + 2\pi\frac{1}{r_n} + \lambda_n \Big(2\pi\frac{1}{\sqrt{\lambda_n}}+ 2\pi r_n\Big) 
= O(\sqrt{\lambda_n}) \quad \lambda_n\to0.
\end{align}
These arguments imply that for sufficiently small values of $\lambda$ there exist curves $C_\lambda$ such that $I_{C_\lambda}\neq\emptyset$ but the energy of $C_\lambda$ vanishes if $\lambda\to0$. Accordingly, hitting the obstacle does not yield any lower energy bound independent of $\lambda$.
%%%%%%%%%%%%%%%%%%%%%%%%%%%%%%
\begin{center}
    \begin{figure}[htbp]
      \includegraphics[scale=0.05]{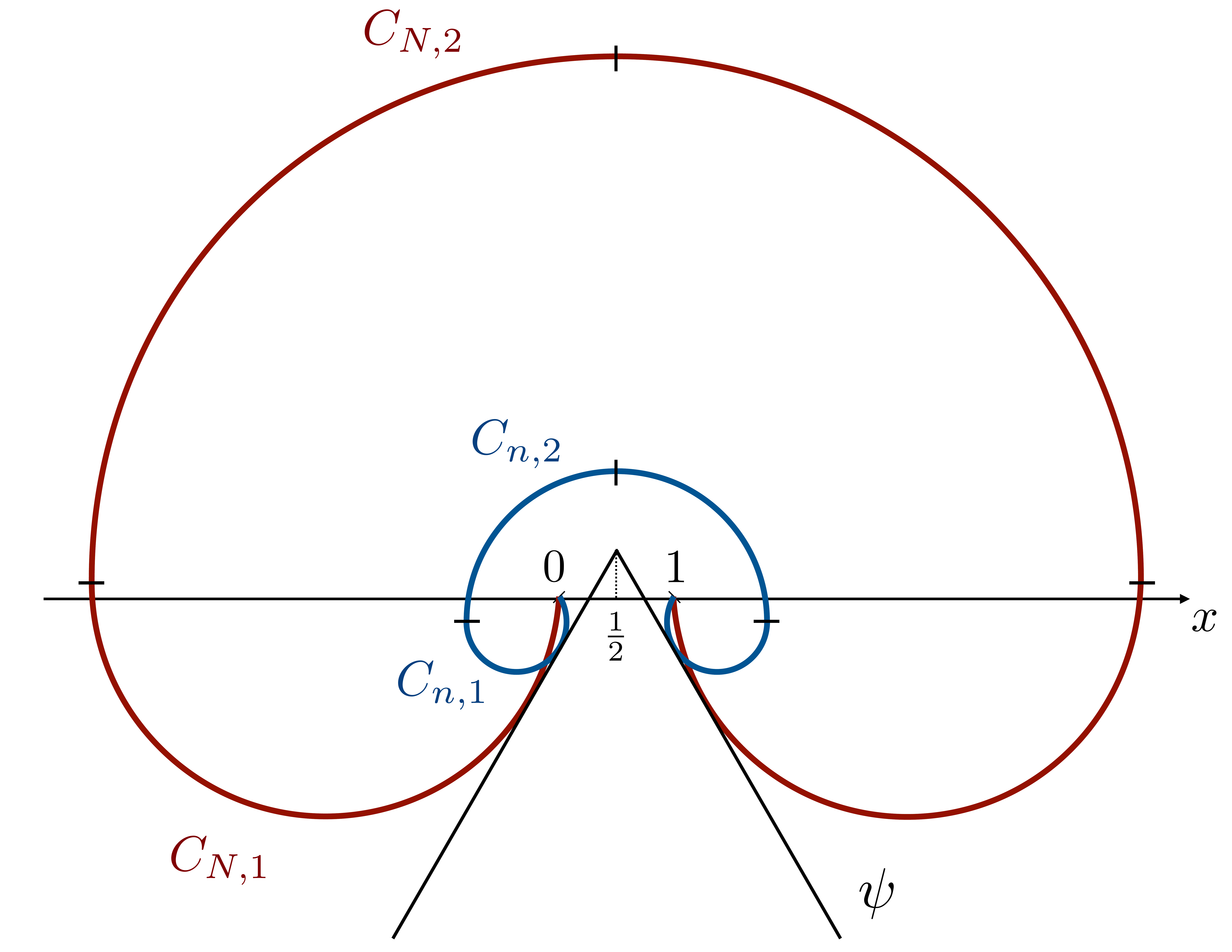}
  \caption{Examples of constructions of the curve $C_n$ as in Remark~\ref{rem:escaping-circular_arcs}. The energy $\mathcal{E}_{\lambda_n}[C_n]$ can be close to $0$ as $n\to\infty$ while $C_n$ always touch the obstacle.
  }
  \label{fig:escaping}
  \end{figure}
\end{center}
%%%%%%%%%%%%%%%%%%%%%%%%%%%%%% 
\vspace{-1.5\baselineskip}
\end{remark}
In order to overcome the difficulty mentioned above,  
we introduce an  auxiliary variational problem, called the \textit{rhomb problem}: consider the minimizing problem of $\mathcal{E}_\lambda$ in 
\[
A_{\rm sym}^\lozenge:= 
\Set{ \gamma \in W^{2,2}_{\rm imm}(0,1; \mathbf{R}^2) |
\begin{array}{l}
 \gamma(0)=(0,0), \ \gamma(1)=\gamma(1,0), \;  \gamma(1-x) = (1-\gamma^1(x),\gamma^2(x)), \\ 
 |\gamma^2(x)| \geq \psi(\gamma^1(x))) \text{ in } [0,1]
\end{array}
}.
\]
We now discuss existence and some properties of minimizers in $A_{\rm sym}^\lozenge$. The existence of minimizers can be derived almost along the lines of Lemma \ref{lem:globalminimizer_existience}. For that reason we postpone a detailed proof of the following lemma to Appendix \ref{sect:appendixB}.

\begin{lemma}\label{lem:existence-rho}
For each $\lambda>0$ there exists a minimizer of $\mathcal{E}_\lambda$ in $A_{\rm sym}^\lozenge$. 
\end{lemma}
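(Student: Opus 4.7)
The plan is to mimic the direct method argument from Lemma~\ref{lem:globalminimizer_existience} almost verbatim, adapting it to the symmetric rhomb setting. First I would pick a minimizing sequence $\{\gamma_j\}_{j \in \mathbf{N}} \subset A_{\rm sym}^\lozenge$ of $\mathcal{E}_\lambda$ and reparametrize each curve with constant speed so that $|\gamma_j'| \equiv L[\gamma_j]$. Note that the symmetry condition $\gamma(1-x) = (1-\gamma^1(x),\gamma^2(x))$ is invariant under constant-speed reparametrization (both $\gamma(\cdot)$ and $x \mapsto (1-\gamma^1(1-x),\gamma^2(1-x))$ become constant-speed parametrizations of the same image, with matching boundary conditions). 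Using $\mathcal{E}_\lambda[\gamma_j] \leq C$ and the identity $\|\gamma_j''\|_{L^2(0,1)}^2 = L[\gamma_j]^3 B[\gamma_j]$, plus the trivial lower length bound $L[\gamma_j] \geq 1$, I obtain uniform bounds on $\gamma_j$ in $W^{2,2}(0,1;\mathbf{R}^2)$.

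Next I would extract (along a subsequence) a weak limit $\bar{\gamma} \in W^{2,2}(0,1;\mathbf{R}^2)$ with $\gamma_j \rightharpoonup \bar{\gamma}$ in $W^{2,2}$ and $\gamma_j \to \bar{\gamma}$ in $C^1([0,1];\mathbf{R}^2)$ by Rellich--Kondrachov. The boundary conditions $\bar{\gamma}(0)=(0,0)$, $\bar{\gamma}(1)=(1,0)$, and the symmetry relation $\bar{\gamma}(1-x) = (1-\bar{\gamma}^1(x),\bar{\gamma}^2(x))$ pass to the $C^0$-limit trivially. For the rhomb constraint $|\gamma_j^2(x)| \geq \psi(\gamma_j^1(x))$, uniform continuity of $\psi$ from assumption (A1) together with uniform convergence $\gamma_j^1 \to \bar{\gamma}^1$ gives $\psi \circ \gamma_j^1 \to \psi \circ \bar{\gamma}^1$ uniformly; combined with the uniform convergence of $\gamma_j^2$ this yields $|\bar{\gamma}^2(x)| \geq \psi(\bar{\gamma}^1(x))$ for all $x \in [0,1]$. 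Immersion is ensured exactly as in Lemma~\ref{lem:globalminimizer_existience}: the constant-speed condition forces $|\bar{\gamma}'| = \lim_j L[\gamma_j] = L[\bar{\gamma}] \geq 1 > 0$. Hence $\bar{\gamma} \in A_{\rm sym}^\lozenge$.

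Finally, exactly as in the proof of Lemma~\ref{lem:globalminimizer_existience}, the identity $B[\bar{\gamma}] = L[\bar{\gamma}]^{-3}\|\bar{\gamma}''\|_{L^2}^2$ combined with weak lower semicontinuity of the $L^2$-norm and continuity of $L$ under $C^1$-convergence yields
\begin{equation}
\mathcal{E}_\lambda[\bar{\gamma}] \leq \liminf_{j \to \infty} \mathcal{E}_\lambda[\gamma_j] = \inf_{\gamma \in A_{\rm sym}^\lozenge} \mathcal{E}_\lambda[\gamma],
\end{equation}
so $\bar{\gamma}$ is the desired minimizer.

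I do not expect any serious obstacle: the only point where the rhomb setup differs from Lemma~\ref{lem:globalminimizer_existience} is in checking the (two-sided) obstacle constraint and the symmetry in the limit, and both are automatic from $C^1$-convergence plus the uniform continuity of $\psi$. For this reason, relegating the proof to an appendix and presenting it as a variant of the earlier direct method argument is natural.
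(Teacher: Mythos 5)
Your proposal is correct and follows essentially the same route as the paper's own proof in the appendix: constant-speed reparametrization, uniform $W^{2,2}$-bounds from the energy bound via $\|\gamma_j''\|_{L^2}^2 = L[\gamma_j]^3 B[\gamma_j]$, extraction of a $W^{2,2}$-weak and $C^1$-convergent subsequence, passage of the boundary, symmetry and rhomb constraints to the limit, and weak lower semicontinuity of $B$ together with continuity of $L$. Your explicit check that the symmetry condition survives constant-speed reparametrization and the limit is a small detail the paper leaves implicit, but it is the same argument.
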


%%%%%%%%%%%%%%%%%%%%%%%%%%%%%%%%%%%%%%
\begin{remark}\label{rem:smallenergyInfima}
    Notice that $\inf\limits_{A_{\rm sym}^\lozenge }\mathcal{E}_\lambda \rightarrow 0$ as $\lambda \rightarrow 0$. The reason for this is as follows. For $\lambda < \hat{\lambda}$ (given as in \eqref{eq:def-hat_lambda}) and also such that $\psi(\frac{1}{2}) < h_\lambda$ (cf.\ \eqref{eq:h_lambda}) one readily checks that $\gamma^{\lambda,1,1}_{\rm larc}$ is admissible in $A_{\rm sym}^\lozenge$. Hence we may estimate 
   $$
        \inf_{A_{\rm sym}^\lozenge} \mathcal{E}_\lambda  \leq \mathcal{E}_\lambda[\gamma^{\lambda,1,1}_{\rm larc}] \longrightarrow 0 \quad (\lambda \rightarrow 0). 
 $$ More precisely one has $\mathcal{E}_\lambda[\gamma_{\rm larc}^{\lambda,1,1}] = O(\sqrt{\lambda})$ as can be seen with \eqref{eq:longer-arc_energy}. %e.g.\ in \cite{MYarXiv2409}*{Equation (4.6)}. 
\end{remark}

\begin{remark}\label{rem:minimizer-touching_rhomb}
Lemma~\ref{lem:existence-rho} enables us to discuss properties of minimizers for the rhomb problem.
In fact, a minimizer of $\mathcal{E}_\lambda$ in $A_{\rm sym}^\lozenge$ has similar properties to minimizers of $\mathcal{E}_\lambda$ in $A_{\rm sym}$. 
For example, by the same argument as in Lemma~\ref{lem:E-L_eq_on_noncoincidence}, if $\gamma$ is a minimizer of $\mathcal{E}_\lambda$ in $A_{\rm sym}^\lozenge$, then its signed curvature $k$ satisfies $k(0)=0$ and $2k''+k^3-\lambda k=0$ on the non-coincidence set.
\end{remark}

We intend to prove that minimizers in $A_{\rm sym}$ do not touch the symmetric cone obstacle for sufficiently small $\lambda> 0$. To this end it turns out helpful to show first the nontouching property for minimizers in $A_{\rm sym}^\lozenge$.

\begin{definition} 
For $\gamma \in A_{\rm sym}^\lozenge$, we define the \textit{coincidence set} $I_\gamma^\lozenge$ by 
    \[
    I_\gamma^\lozenge := \big\{ x\in [0,1] \,\big|\, |\gamma^2(x)|=\psi(\gamma^1(x)) \big\}.
    \]
% Let $\lambda > 0$. 
Let $\mathcal{M}_\lambda^\lozenge$ denote %by
    the set of all minimizers of $\mathcal{E}_\lambda$ in $A_{\rm sym}^\lozenge$ that touch the obstacle, i.e. 
    \begin{equation}
        \mathcal{M}_\lambda^\lozenge = \Big\{ \gamma \in A_{\rm sym}^\lozenge \ \Big|\ \mathcal{E}_\lambda[\gamma] = \inf_{\sigma \in A_{\rm sym}^\lozenge} \mathcal{E}_\lambda[\sigma], \ I_\gamma^\lozenge \ne \emptyset \Big\}.
    \end{equation}
\end{definition}

Our goal is to show that $\mathcal{M}_\lambda^\lozenge = \emptyset$ for sufficiently small $\lambda$. In order to do so we deduce several properties of elements of $\mathcal{M}_\lambda^\lozenge$. One first property we need is embeddedness of these elements.
To this end we recall

%%%%%%%%%%%%%%%%%%%%%%%%%%%%%%%%%%%%%%
\begin{lemma}[\cite{MYarXiv2409}*{Lemma~4.3}]\label{lem:property-h}
The function $h:(\frac{1}{\sqrt{2}},1)\to \mathbf{R}$ defined by 
\begin{align}\label{eq:def-h}
    h(q):=\frac{1}{\sqrt{2q^2-1}}\big((4q^2-3)\mathrm{K}(q)+2\mathrm{E}(q) \big), \quad q\in\big(\tfrac{1}{\sqrt{2}},1)
\end{align}
satisfies
\begin{align}\label{eq:diff-h}
h'(q)=-\frac{f(q)}{(2q^2-1)^{\frac{3}{2}}q(1-q^2)}, \quad q\in (\tfrac{1}{\sqrt{2}},1),
\end{align}
where $f$ is the function defined by \eqref{eq:def-f}.
In particular, $h$ is strictly increasing on $(\hat{q},1)$. 
\end{lemma}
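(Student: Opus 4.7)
The plan is to compute $h'(q)$ directly by the quotient rule and the standard derivative formulas
\[
\frac{d\mathrm{K}}{dq}(q) = \frac{\mathrm{E}(q) - (1-q^2)\mathrm{K}(q)}{q(1-q^2)}, \qquad \frac{d\mathrm{E}}{dq}(q) = \frac{\mathrm{E}(q) - \mathrm{K}(q)}{q},
\]
and then to read off the sign of $h'$ on $(\hat{q},1)$ from the explicit formula \eqref{eq:diff-h}, using the fact already recorded in Section~\ref{sec:penalizedpinnedelasticae} that $\hat{q}$ is the unique root of $f$ in $(1/\sqrt{2},1)$.

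For the derivative formula, I would write $h = u/v$ with $u(q) := (4q^2-3)\mathrm{K}(q) + 2\mathrm{E}(q)$ and $v(q) := \sqrt{2q^2-1}$, so that
\[
h'(q) = \frac{(2q^2-1)\,u'(q) - 2q\,u(q)}{(2q^2-1)^{3/2}}.
\]
Expanding $u'(q)$ with the formulas above and clearing the remaining denominator $q(1-q^2)$, the task reduces to verifying the polynomial identity
\[
q(1-q^2)\bigl[(2q^2-1)u'(q) - 2q\,u(q)\bigr] = -f(q).
\]
Collecting the $\mathrm{K}$- and $\mathrm{E}$-coefficients separately, this boils down to the elementary checks
\[
(1-q^2)(4q^2-1) = -(4q^4-5q^2+1), \qquad (2q^2-1)^2 - 4q^2(1-q^2) = 8q^4-8q^2+1,
\]
both of which are immediate and produce exactly the $\mathrm{K}$- and $\mathrm{E}$-coefficients of $-f$ in \eqref{eq:def-f}. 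This gives \eqref{eq:diff-h}.

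For the monotonicity claim, note that on $(\hat{q},1)$ the denominator $(2q^2-1)^{3/2}\,q\,(1-q^2)$ is strictly positive, so it suffices to establish that $f(q)<0$ throughout $(\hat{q},1)$. Since $\hat{q}$ is the unique zero of $f$ in $(1/\sqrt{2},1)$, by continuity $f$ has constant sign on $(\hat{q},1)$, and I only need to determine that sign at one convenient point. Letting $q\to 1^-$, the coefficient $4q^4-5q^2+1 = -(1-q^2)(4q^2-1)$ vanishes linearly while $\mathrm{K}(q)$ blows up only logarithmically, so $(4q^4-5q^2+1)\mathrm{K}(q)\to 0$; meanwhile $(-8q^4+8q^2-1)\mathrm{E}(q)\to -1$. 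Hence $f(q)\to -1<0$, so $f<0$ on $(\hat{q},1)$ and therefore $h'>0$ on $(\hat{q},1)$, as desired.

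The only mild obstacle is the bookkeeping in the derivative computation; there is no conceptual difficulty, and the sign analysis is reduced to inspecting the boundary behavior of $f$ at $q=1$.
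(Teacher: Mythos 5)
Your proof is correct. Note that the paper itself gives no argument for this lemma (it is quoted from \cite{MYarXiv2409}*{Lemma~4.3}), so there is nothing in-paper to compare against; your direct computation via the quotient rule and the derivative formulas \eqref{eq:diff-elliptic-int} does reproduce \eqref{eq:diff-h} — I checked that collecting terms indeed reduces to the two polynomial identities you state — and your sign argument, combining the positivity of the denominator $(2q^2-1)^{3/2}q(1-q^2)$ on $(\hat{q},1)$ with the uniqueness of the root $\hat{q}$ of $f$ and the limit $f(q)\to -1$ as $q\uparrow 1$ (since $(4q^4-5q^2+1)\mathrm{K}(q)\to 0$ and $\mathrm{E}(1)=1$), is a clean and valid way to conclude $h'>0$ there.
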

%%%%%%%%%%%%%%%%%%%%%%%%%%%%%%%%%%%%%%

Now we investigate embeddedness of a minimizer in the rhomb problem.

%%%%%%%%%%%%%%%%%%%%%%%%%%%%%%%%%%%%%%
\begin{lemma}\label{lem:embed-touching-rhomb}
Let $\hat{\lambda}>0$ be a constant given by \eqref{eq:def-hat_lambda}.
Suppose that $\lambda \in (0,\hat{\lambda})$ and a symmetric cone obstacle $\psi:\mathbf{R}\to\mathbf{R}$  satisfies \eqref{eq:height-nontouching}. 
Then each 
element of $\mathcal{M}_\lambda^\lozenge$ does
 not have a self-intersection. 
\end{lemma}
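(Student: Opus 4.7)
The plan is to argue by contradiction and exploit the figure-eight elastica as an unavoidable energy cost for any closed sub-loop, then to beat the resulting lower bound with the explicit symmetric competitor $\gamma^{\lambda,1,1}_{\rm larc}$, which is admissible in $A_{\rm sym}^\lozenge$ under our hypotheses. Suppose towards a contradiction that some $\gamma \in \mathcal{M}_\lambda^\lozenge$ admits a self-intersection; with arclength parametrization $\tilde\gamma:[0,L]\to\mathbf{R}^2$, there exist $0 \le s_1 < s_2 \le L$ with $\tilde\gamma(s_1) = \tilde\gamma(s_2)$, and the boundary-only case $(s_1,s_2)=(0,L)$ is ruled out by $\tilde\gamma(0) \neq \tilde\gamma(L)$. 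The restriction $\sigma := \tilde\gamma|_{[s_1,s_2]}$, after reparametrization, is an immersed $W^{2,2}$ closed curve, i.e.\ $\sigma \in A_{\rm drop}$.

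Next, I would apply Proposition~\ref{prop:fig8_minimizes} to bound
\[
\mathcal{E}_\lambda[\sigma] \ \geq\ \mathcal{E}_\lambda[\gamma_{\rm leaf}^{\lambda,1}] \ =\ 2\sqrt{2}\sqrt{\lambda}\, h(q_*),
\]
where $h$ is the function from Lemma~\ref{lem:property-h}. Since both $B$ and $L$ are nonnegative and additive over subarcs, $\mathcal{E}_\lambda[\gamma] \geq \mathcal{E}_\lambda[\sigma]$, so any self-intersecting $\gamma$ carries at least the figure-eight energy.

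To conclude I would exhibit a strictly cheaper admissible competitor. By Remark~\ref{rem:smallenergyInfima}, the hypotheses $\lambda \in (0,\hat\lambda)$ and $\psi(\tfrac{1}{2}) < h_\lambda$ ensure $\gamma_{\rm larc}^{\lambda,1,1} \in A_{\rm sym}^\lozenge$. From the explicit formula \eqref{eq:longer-arc_energy} and the definition of $h$,
\[
\mathcal{E}_\lambda[\gamma_{\rm larc}^{\lambda,1,1}] \ =\ 2\sqrt{2}\sqrt{\lambda}\, h(q_2(\lambda)).
\]
Since $g(\hat q) = \hat\lambda$ and $g(q_*) = 0$ with $g$ strictly decreasing on $(\hat q, q_*)$, the choice $\lambda < \hat\lambda$ forces $q_2(\lambda) \in (\hat q, q_*)$ strictly. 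The monotonicity of $h$ on $(\hat q, 1)$ then gives $h(q_2(\lambda)) < h(q_*)$, so
\[
\mathcal{E}_\lambda[\gamma_{\rm larc}^{\lambda,1,1}] \ <\ \mathcal{E}_\lambda[\gamma_{\rm leaf}^{\lambda,1}] \ \leq\ \mathcal{E}_\lambda[\sigma] \ \leq\ \mathcal{E}_\lambda[\gamma],
\]
contradicting the minimality of $\gamma$ in $A_{\rm sym}^\lozenge$.

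The only non-routine items in this scheme are: (a) verifying that $\sigma$ inherits enough regularity to invoke Proposition~\ref{prop:fig8_minimizes} (straightforward, since $\tilde\gamma \in W^{2,2}_{\rm imm}$ implies the same for any restriction to a closed subarc, and $\sigma(s_1)=\sigma(s_2)$ by construction); and (b) the strict separation $q_2(\lambda) < q_*$, which I expect to be the main bookkeeping point. Both are settled by the preparatory lemmas of Section~\ref{sec:penalizedPinnedElasticae} and Lemma~\ref{lem:property-h}, so the entire argument reduces to the energy comparison $h(q_2(\lambda)) < h(q_*)$.
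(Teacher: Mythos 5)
Your proposal is correct and follows essentially the same route as the paper: restrict the minimizer to the closed subloop created by the self-intersection, apply Proposition~\ref{prop:fig8_minimizes} to get the lower bound $2\sqrt{2}\sqrt{\lambda}\,h(q_*)$, and contradict it with the upper bound $2\sqrt{2}\sqrt{\lambda}\,h(q_2(\lambda))$ coming from the admissible competitor $\gamma_{\rm larc}^{\lambda,1,1}$ together with the strict monotonicity of $h$ on $(\hat q,1)$ from Lemma~\ref{lem:property-h}. The only cosmetic difference is that you verify $q_2(\lambda)\in(\hat q,q_*)$ via the monotonicity of $g$, whereas the paper reads this off directly from the definition \eqref{eq:mod_PPE}.
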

%%%%%%%%%%%%%%%%%%%%%%%%%%%%%%%%%%%%%%
\begin{proof}
We argue by contradiction. 
If the statement were false, then there would exist 
$\Gamma \in \mathcal{M}_\lambda^\lozenge$ 
such that $\Gamma(a)=\Gamma(b)$ for some $0\leq a < b \leq1$.  
Then, noting that up to reparametrization $\Gamma|_{[a,b]} \in A_{\rm drop}=\{ \gamma\in W^{2,2}_{\rm imm}(0,1;\mathbf{R}^2)\,|\, \gamma(0)=\gamma(1)\}$, we deduce from Proposition~\ref{prop:fig8_minimizes} that
\begin{align}\label{energy-for-embedded1}
   \mathcal{E}_\lambda[\Gamma] \geq \mathcal{E}_\lambda[\Gamma|_{[a,b]}] &\geq \inf_{\gamma \in A_{\rm drop}}\mathcal{E}_\lambda[\gamma] 
    = 2\sqrt{2}\sqrt{\lambda}\frac{(4q_*^2-3)\mathrm{K}(q_*)+2\mathrm{E}(q_*)}{\sqrt{2q_*^2-1}}.
\end{align}
On the other hand, by the assumption $\lambda \in (0,\hat{\lambda})$ we
may consider the curve $\gamma^{\lambda,1,1}_{\rm larc}$ as given in Proposition \ref{prop:PPE-classification}   
and by \eqref{eq:height-nontouching} we deduce that $\gamma^{\lambda,1,1}_{\rm larc} \in A_{\rm sym}^\lozenge$. This yields
\begin{align}\label{energy-for-embedded2}
    \mathcal{E}_\lambda[\Gamma] = \inf_{\gamma \in A_{\rm sym}^\lozenge}\mathcal{E}_\lambda[\gamma] \leq \mathcal{E}_\lambda[\gamma^{\lambda,1,1}_{\rm larc}] = 2\sqrt{2}\sqrt{\lambda}\frac{(4q_2^2 -3 ) \mathrm{K}(q_2)+2\mathrm{E}(q_2)}{\sqrt{2q_2^2 -1}}, 
\end{align}
where in the last equality we  used the energy formula for $\gamma^{\lambda,1,1}_{\rm larc}$ (cf.\ \eqref{eq:longer-arc_energy}) and set $q_2:=q_2(\lambda)$ as in \eqref{eq:mod_PPE}.  
Thus, using the function $h$ defined by \eqref{eq:def-h} we obtain $2\sqrt{2}\sqrt{\lambda}h(q_*)\leq\mathcal{E}_\lambda[\Gamma]\leq 2\sqrt{2}\sqrt{\lambda}h(q_2)$, which contradicts Lemma~\ref{lem:property-h} since $\hat{q}<q_2<q_*$. 
\end{proof}

Next we prove a result about the tangential angle of elements in $\mathcal{M}_\lambda^\lozenge$. 
This result will once again motivate the introduction of the rhomb problem. 
For $\gamma\in A$ let $l$ be the \textit{first touching point} in terms of the arclength parametrization, i.e., 
\[l:= \min\big\{s\in [0,L[\gamma]] \mid |\tilde{\gamma}^2(s)| = \psi( \tilde{\gamma}^1(s)) \big\}.\]
In the original problem, the first touching point $l$ of $\gamma$ may diverge to $-\infty$, and then the tangential angles on $[0,l]$ may uniformly tend to $-\theta_\psi:=-\arctan{\psi'(0)}$.
However, in the rhomb problem, we can always find a point with a different angle from that of the first touching point.

%%%%%%%%%%%%%%%%%%%%%%%%%%%%%%%%%%%%%%
\begin{lemma}\label{lem:variation-angle-rho}
Let $\lambda \in (0,\hat{\lambda})$.
Let $\gamma \in \mathcal{M}_\lambda^\lozenge$ (with arclength reparametrization $\tilde{\gamma} : [0,L] \rightarrow \mathbf{R}^2$) and $l$ be the first touching point of $\gamma$. Further, let $\theta$ be a tangential angle function for $\tilde{\gamma}$, i.e. $\theta \in C^0([0,L])$ and $\tilde{\gamma}'=(\cos(\theta),\sin(\theta))$.
Then, there exists $l^* \in (0,l)$ such that 
\begin{equation} \label{eq:variation-theta-uniform}
\theta(l) - \theta(l^*)  \in [c_\psi, 2\pi- c_\psi] \quad (\mathrm{mod} \ 2\pi).
\end{equation}
Here $c_\psi\in (0,\frac{\pi}{2})$ is a constant only depending on $\psi$ (independent of $\lambda$), defined by 
\[
c_\psi:= \min\big\{ \theta_\psi-\arctan{(2\psi(\tfrac{1}{2}))}, \ \arctan{(2\psi(\tfrac{1}{2}))} \big\}, \quad \text{where} \ \ \theta_\psi:=\arctan\psi'(0).
\]
\end{lemma}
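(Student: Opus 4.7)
My plan is a Rolle-type chord argument combined with a first-order analysis of the touching condition. The reflection $\gamma\mapsto(\gamma^1,-\gamma^2)$ preserves both $A_{\rm sym}^\lozenge$ and $\mathcal{E}_\lambda$ and negates tangent angles, and the conclusion is invariant under this operation since the interval $[c_\psi,2\pi-c_\psi]$ is symmetric around $\pi$ modulo $2\pi$. I may therefore assume $\tilde{\gamma}^2(l)>0$ and write $\tilde{\gamma}(l)=(x_l,\psi(x_l))$ with $x_l\in(a_\psi,1-a_\psi)$, where $a_\psi\in(0,\tfrac12)$ denotes the unique root of $\psi$ in $(0,\tfrac12)$.

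The first step is to pin down $\theta(l)$. Since $f(s):=\tilde{\gamma}^2(s)-\psi(\tilde{\gamma}^1(s))$ is strictly positive immediately to the left of $l$ and nonnegative immediately to the right (by the first-touching definition and admissibility in $A_{\rm sym}^\lozenge$), computing left- and right-derivatives of $f$ at $l$ yields $\sin(\theta(l)-\theta_\psi)=0$ when $x_l<\tfrac12$ (using $\psi'(x_l)=\tan\theta_\psi$) and $\sin(\theta(l)+\theta_\psi)=0$ when $x_l>\tfrac12$, so $\theta(l)\pmod{2\pi}$ lies in $\{\theta_\psi,\theta_\psi+\pi\}$ respectively $\{-\theta_\psi,\pi-\theta_\psi\}$. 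In the remaining case $x_l=\tfrac12$ the kink of $\psi$ at $\tfrac12$ only gives the weaker bound $|\tan\theta(l)|\leq\psi'(0)$; here I would invoke embeddedness (Lemma~\ref{lem:embed-touching-rhomb}) together with the $x$-symmetry identity $\tilde{\gamma}(L-l)=(1-x_l,\psi(x_l))=\tilde{\gamma}(l)$ to conclude $l=L/2$, and then the arclength symmetry $\theta(L-s)\equiv-\theta(s)\pmod{2\pi}$ evaluated at $s=L/2$ to obtain $\theta(L/2)\in\{0,\pi\}$.

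The second step is to produce $l^*$. I apply Rolle's theorem to
\[
g(s):=\tilde{\gamma}^2(s)\,x_l - \tilde{\gamma}^1(s)\,\psi(x_l),
\]
which vanishes at $s=0$ and at $s=l$; the resulting critical point $l^*\in(0,l)$ satisfies $\tilde{\gamma}'(l^*)\parallel(x_l,\psi(x_l))$, so $\theta(l^*)\equiv\alpha_l\pmod\pi$ with $\alpha_l:=\arctan(\psi(x_l)/x_l)$. Since $\psi$ is piecewise affine with vertex at $\tfrac12$, a direct computation gives $\alpha_l\in(0,\arctan(2\psi(\tfrac12))]$, with equality exactly at the tip. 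The conclusion then reduces to an enumeration of the mod-$2\pi$ angular distance between $\theta(l)$ and $\theta(l^*)$ in each of the three subcases; the defining inequalities $c_\psi\leq\theta_\psi-\arctan(2\psi(\tfrac12))$ and $c_\psi\leq\arctan(2\psi(\tfrac12))$, combined with $\theta_\psi<\pi/2$, deliver both the lower bound $c_\psi$ and the upper bound $2\pi-c_\psi$ in every case.

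I expect the tip subcase to be the main obstacle. Without the appeal to embeddedness the admissible tangent directions at the tip form an interval that contains the chord angle $\arctan(2\psi(\tfrac12))$, so the Rolle-chord bound would be vacuous (it might produce $\theta(l^*)=\theta(l)$). It is precisely the minimality of $\gamma$, used through Lemma~\ref{lem:embed-touching-rhomb}, together with the reflectional symmetry of $\tilde{\gamma}$, that excludes this degenerate alignment by pinning $l=L/2$ and $\theta(l)\in\{0,\pi\}$, both of which sit at angular distance at least $c_\psi$ from $\alpha_l\pmod\pi$.
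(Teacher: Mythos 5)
Your proposal follows the paper's strategy quite closely (normalize by reflection, pin down $\theta(l)$ via the first-order touching condition, produce $l^*$ by a Rolle/Cauchy mean-value chord argument, and use embeddedness plus symmetry at the tip), and the two generic cases $x_l\neq\frac12$ and the tip case are handled correctly — indeed you even treat $x_l>\frac12$ explicitly, which the paper passes over. However, there is one genuine gap: the reduction ``I may therefore assume $\tilde{\gamma}^2(l)>0$'' is not achievable. Reflection in the $x$-axis only converts $\tilde{\gamma}^2(l)<0$ into $\tilde{\gamma}^2(l)>0$; it cannot remove the case $\tilde{\gamma}^2(l)=0$, which occurs exactly when the first touching point is a corner of the rhomb on the $x$-axis, i.e.\ $\tilde{\gamma}(l)=(a_\psi,0)$ or $(1-a_\psi,0)$ with $\psi(a_\psi)=0$. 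Nothing rules out a priori that a minimizer first meets the constraint there, so this case must be treated, and your Step 1 breaks down in it: since the admissibility condition is only $|\tilde{\gamma}^2|\geq\psi\circ\tilde{\gamma}^1$, the curve may pass through the corner into the region $\{y\leq-\psi\}$, so $f=\tilde{\gamma}^2-\psi\circ\tilde{\gamma}^1$ need not be nonnegative to the right of $l$, and $\theta(l)$ is \emph{not} confined to a discrete set.

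The paper handles this as a separate case: after a further reflection normalizing $(\tilde{\gamma}^2)'(l)\geq0$, one uses the constraint on \emph{both} boundaries of the rhomb (ruling out $\theta(l)\in[0,\theta_\psi)$ by showing $\tilde{\gamma}^2-\psi\circ\tilde{\gamma}^1<0$ just after $l$ while $\tilde{\gamma}^2\geq 0$ there, and ruling out $\theta(l)\in(\pi-\theta_\psi,\pi]$ by the analogous argument with $-\tilde{\gamma}^2-\psi\circ\tilde{\gamma}^1$ just before $l$) to conclude only the interval bound $\theta(l)\in[\theta_\psi,\pi-\theta_\psi]$ $(\mathrm{mod}\ 2\pi)$. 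Your chord argument still applies there — since $\psi(a_\psi)=0$ the chord is horizontal and $\theta(l^*)\in\{0,\pi\}$ $(\mathrm{mod}\ 2\pi)$ — and combining this with the interval bound gives \eqref{eq:variation-theta-uniform} because $c_\psi\leq\theta_\psi-\arctan(2\psi(\tfrac12))<\theta_\psi$. So the missing case is repairable with an extra first-order argument of the type above, but as written your proof omits it, and without it the lemma is not established.
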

%%%%%%%%%%%%%%%%%%%%%%%%%%%%%%%%%%%%%%
\begin{proof}
Since $\gamma \in \mathcal{M}_\lambda^\lozenge$ is a minimizer of $\mathcal{E}_\lambda$ in $A_{\rm sym}^\lozenge$, then the reflection of $\gamma$ is also a minimizer in $A_{\rm sym}^\lozenge$. Thus, up to reflection, 
we may assume $\tilde{\gamma}^2(l)\geq0$ (i.e., if  $\tilde{\gamma}^2(l)<0$, then we may consider the reflection of $\gamma$ instead of the original curve $\gamma$).
Hereafter let $a_0 \in (0,\frac{1}{2})$ denote by the unique point such that $\psi'(a_0)>0$ and $\psi(a_0)=0$.
Throughout the proof let $a\in [a_0, \frac{1}{2}]$ be such that $\tilde{\gamma}(l)=(a, \psi(a))$. 
Then, by the Cauchy's mean value theorem, there exists $l^*\in(0,l)$ such that 
\[
\frac{(\tilde{\gamma}^2)'(l^*)}{(\tilde{\gamma}^1)'(l^*)} = \frac{\tilde{\gamma}^2(l)- \tilde{\gamma}^2(0)}{\tilde{\gamma}^1(l)- \tilde{\gamma}^1(0)} = \frac{\psi(a)}{a}, 
\]
from which it follows that 
\begin{equation}\label{eq:angle-l*}
\theta(l^*)=  \arctan{\big(\tfrac{\psi(a)}{a}\big)} \quad \textrm{or} \quad   \theta(l^*)=  \pi + \arctan{\big(\tfrac{\psi(a)}{a}\big)}\quad (\text{mod } 2\pi).
\end{equation}

\textbf{Case I.} First we address the case $a = \frac{1}{2}$. 
By $\lambda\in(0,\hat{\lambda})$ and the embeddedness of $\gamma$ (cf.\ Lemma~\ref{lem:embed-touching-rhomb}) we see that $l=\frac{L[\gamma]}{2}$, and then by the symmetry it also follows that $\theta(l)=2m\pi$ for some $m\in \mathbf{Z}$. 
This together with \eqref{eq:angle-l*} yields 
\[
\theta(l)-\theta(l^*)  \in [     \pi - \arctan(2\psi(\tfrac{1}{2})), 2\pi - \arctan(2\psi(\tfrac{1}{2})) ] \quad  (\text{mod } 2\pi).
\]
Using that $\pi - \arctan(2\psi(\tfrac{1}{2})) \geq \frac{\pi}{2} 
> c_\psi$ and  $\arctan(2\psi(\tfrac{1}{2})) \geq c_\psi$ we obtain \eqref{eq:variation-theta-uniform} in this case. 

\textbf{Case II.} Next we consider the case $a \in (a_0,\frac{1}{2})$.
Note that in this case $\psi\circ\tilde{\gamma}^1$ is differentiable at $s=l$. 
Since $\tilde{\gamma}^2(l)=\psi(a)>0$, not only $|\tilde{\gamma}^2|-\psi\circ\tilde{\gamma}^1$ but also $\tilde{\gamma}^2-\psi\circ\tilde{\gamma}^1$ attains a local minimum with value $0$ at $s=l$.
Therefore we have $(\tilde{\gamma}^2)'(l)-\psi'(a)(\tilde{\gamma}^1)'(l)=0$, which implies that 
$\theta(l)=\theta_\psi$ or $\theta(l) = \pi + \theta_\psi$ (mod $2\pi$).
Combining this with \eqref{eq:angle-l*}, we see that 
\[
\theta(l)-\theta(l^*) = \theta_\psi-\arctan{(\tfrac{\psi(a)}{a})} \quad \text{or} \quad \theta(l)-\theta(l^*) = \theta_\psi-\arctan{(\tfrac{\psi(a)}{a})}+\pi \quad (\text{mod }2\pi).
\]
Since $[a_0,1/2]\ni a\mapsto \arctan{(\tfrac{\psi(a)}{a})}$ is strictly increasing, we see that 
$\theta(l)-\theta(l^*)\in [\theta_\psi-\arctan{2\psi(\tfrac{1}{2})}, 3\pi/2]$ (mod $2\pi$).
This clearly implies that \eqref{eq:variation-theta-uniform} holds.

\textbf{Case III.} Now we turn to the remaining case $a=a_0$.
After another reflection we may assume that $(\tilde{\gamma}^2)'(l)\geq0$, and particularly $\theta(l)\in [0,\pi]$ (mod $2\pi$). 
We show that in this case 
\begin{equation}\label{eq:theta-estimate}
\theta(l)\in [\theta_\psi, \pi-\theta_\psi] \quad  (\text{mod } 2\pi). 
\end{equation}  
We argue by contradiction, so suppose that $\theta(l)\in [0, \theta_\psi)$ (mod $2\pi$) or that $\theta(l)\in (\pi-\theta_\psi,\pi]$ (mod $2\pi$). 
In the former case, assuming additionally $\theta(l) \neq 0$, the assumption implies that 
\[
\frac{(\tilde{\gamma}^2)'(l)}{(\tilde{\gamma}^1)'(l)}=\tan\theta(l) < \psi'(0)=\psi'(a_0) =\psi'(\tilde{\gamma}^1(l)).
\]
This inequality and the observation that $(\tilde{\gamma}^1)'(l)>0$ (due to the fact that $\theta(l) \in (0, \theta_\psi) \subset (0,\frac{\pi}{2})$) imply $(\tilde{\gamma}^2-\psi\circ\tilde{\gamma}^1)'(l)<0$. The  last  inequality clearly holds also for the case $\theta(l)=0$.
Hence 
\[
\tilde{\gamma}^2(l+\delta) < \psi(\tilde{\gamma}^1(l+\delta)) \quad \text{for sufficiently small } \delta>0.
\]
A contradiction. 
If $\theta(l)\in (\pi-\theta_\psi,\pi]$ (mod $2\pi$), then similar to the previous argument we can deduce that 
\[
-\tilde{\gamma}^2(l-\delta) < \psi(\tilde{\gamma}^1(l-\delta)) \quad \text{for sufficiently small } \delta>0, 
\]
which is a contradiction to the fact that 
$\gamma\in A_{\rm sym}^\lozenge$.
Thus we have shown \eqref{eq:theta-estimate}.
In view of the fact that $a = a_0$, we deduce from \eqref{eq:angle-l*} that $\theta(l^*) = 0$ or $\theta(l^*) = \pi$ (mod $2\pi$). 
Then this together with \eqref{eq:theta-estimate} yields $\theta(l)-\theta(l^*)\in[\theta_\psi, \pi-\theta_\psi]$ (mod $2\pi$). 
The proof is now complete.
\end{proof}

We are now ready to prove noncoincidence of minimizers of $\mathcal{E}_\lambda$ in $A_{\rm sym}^\lozenge$ for small $\lambda > 0$. 

\begin{lemma}\label{lemma:rhombproblemnontouhing}
    There exists some $\bar{\lambda}>0$ such that for all $\lambda \leq \bar{\lambda}$ each minimizer of $\mathcal{E}_\lambda$ in $A_{\rm sym}^\lozenge$ does not touch the obstacle, i.e. $\mathcal{M}_\lambda^\lozenge = \emptyset$ for all $\lambda \leq \bar{\lambda}$. Furthermore, all minimizers are given by $\gamma_{\rm larc}^{\lambda,1,1}$ and the reflection of $\gamma_{\rm larc}^{\lambda,1,1}$ at the $x$-axis. 
\end{lemma}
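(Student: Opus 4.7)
The plan is to argue by contradiction. Suppose $\lambda_n\downarrow 0$ and $\gamma_n\in\mathcal{M}_{\lambda_n}^\lozenge$. For sufficiently small $\lambda_n$ we have $\psi(\tfrac12)<h_{\lambda_n}$ by \eqref{eq:diverge_larc}, so $\gamma_{\rm larc}^{\lambda_n,1,1}\in A_{\rm sym}^\lozenge$ is an admissible competitor, yielding
\[
\mathcal{E}_{\lambda_n}[\gamma_n]\leq 2\sqrt{2\lambda_n}\,h(q_2(\lambda_n))=O(\sqrt{\lambda_n}),
\]
so that $B[\gamma_n]=O(\sqrt{\lambda_n})$ and $L_n:=L[\gamma_n]\leq C/\sqrt{\lambda_n}$. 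I would first rule out the case that $L_n$ stays bounded along a subsequence: in constant-speed parametrization $\|\gamma_n''\|_{L^2}^2=L_n^3 B[\gamma_n]\to 0$, so $\gamma_n\to\gamma_\infty$ in $C^1$ with $\gamma_\infty''\equiv 0$ and the pinned endpoints force $\gamma_\infty(x)=(x,0)$, which violates $|\gamma_\infty^2|\geq\psi\circ\gamma_\infty^1$ via (A3).

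Hence $L_n\to\infty$. The second step is to rescale by length: set $\tilde\tau_n(s):=\tilde\gamma_n(sL_n)/L_n$ for $s\in[0,1]$. Then $|\tilde\tau_n'|\equiv 1$, $B[\tilde\tau_n]=L_n B[\gamma_n]$ stays bounded, and $\tilde\tau_n(0)=0$, $\tilde\tau_n(1)=(1/L_n,0)\to 0$. Extracting a $W^{2,2}$-weakly, $C^1$-strongly convergent subsequence produces a drop $\tilde\tau_\infty\in A_{\rm drop}$ of length $1$. Setting $\mu_n:=\lambda_n L_n^2$, the identity $\mathcal{E}_{\mu_n}[\tilde\tau_n]=L_n\mathcal{E}_{\lambda_n}[\gamma_n]$ and the competitor estimate give, on a further subsequence with $\mu_n\to\mu_\infty\geq 0$,
\[
\limsup_n \mathcal{E}_{\mu_n}[\tilde\tau_n]\leq\lim_n 2\sqrt{2\mu_n}\,h(q_2(\lambda_n))=2\sqrt{2\mu_\infty}\,h(q_*),
\]
while $L[\tilde\tau_n]\equiv 1$, lower semicontinuity of $B$, and Proposition~\ref{prop:fig8_minimizes} provide the matching lower bound $\mathcal{E}_{\mu_\infty}[\tilde\tau_\infty]\geq 2\sqrt{2\mu_\infty}\,h(q_*)$. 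The case $\mu_\infty=0$ is excluded: it would force $B[\tilde\tau_\infty]=0$ on an immersed drop of length $1$, impossible for a straight segment. So equality occurs at $\mu_\infty>0$, and the uniqueness clause of Proposition~\ref{prop:fig8_minimizes} identifies $\tilde\tau_\infty$ (up to isometry and reparametrization) with $\gamma_{\rm leaf}^{\mu_\infty,1}$. Inspecting the explicit formula \eqref{eq:figure-8}---its $y$-coordinate is strictly monotone on each of the two halves of the parameter interval---shows this leaf is embedded, its preimage of the origin consisting only of the closed-curve's endpoints.

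The contradiction now follows from Lemma~\ref{lem:variation-angle-rho}. By scale invariance of the total curvature and Cauchy--Schwarz,
\[
c_\psi\leq\int_{l_n^*/L_n}^{l_n/L_n}|k_{\tilde\tau_n}|\,ds\leq\sqrt{(l_n-l_n^*)/L_n}\,\sqrt{B[\tilde\tau_n]},
\]
so $l_n/L_n$ is bounded below by $c_\psi^2/\sup_n B[\tilde\tau_n]>0$. Since also $l_n/L_n\leq\tfrac12$, a subsequence gives $l_n/L_n\to l_\infty\in(0,\tfrac12]$. The touching condition $\tilde\gamma_n(l_n)=(a_n,\pm\psi(a_n))$ with $a_n$ and $\psi(a_n)$ bounded forces $\tilde\tau_\infty(l_\infty)=0$, and together with $\tilde\tau_\infty(0)=\tilde\tau_\infty(1)=0$ this exhibits three distinct arclength parameters at which $\tilde\tau_\infty$ meets the origin, contradicting embeddedness of the leaf. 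This yields $\mathcal{M}_\lambda^\lozenge=\emptyset$ for all $\lambda\leq\bar\lambda$ with some $\bar\lambda>0$.

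Given this, any minimizer $\gamma$ is automatically nontouching, so Lemma~\ref{lem:E-L_eq_on_noncoincidence}---valid in the rhomb setting by Remark~\ref{rem:minimizer-touching_rhomb}---makes $\gamma$ a nontrivial penalized pinned elastica. Remark~\ref{rem:energy_comparison_PPE} then forces $\gamma$ to coincide with $\gamma_{\rm larc}^{\lambda,1,1}$ up to reflection and reparametrization; reflection in the $x$-axis preserves $A_{\rm sym}^\lozenge$ and the energy, giving precisely the two claimed minimizers. The hard part of the argument is the identification of $\tilde\tau_\infty$ as the leaf: Remark~\ref{rem:escaping-circular_arcs} shows no direct lower bound on the energy of touching competitors is available, so both the rhomb confinement (which via the competitor bound produces $L_n\to\infty$ and the length-$1$ rescaling) and the drop inequality of Proposition~\ref{prop:fig8_minimizes} are essential.
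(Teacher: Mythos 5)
Your argument is correct, but it follows a genuinely different route from the paper. The paper's proof works with the explicit wavelike-elastica representation of the arc up to the first touching point: it tracks the parameters $(\alpha_n,q_n,\xi_n)$, uses Lemma~\ref{lem:variation-angle-rho} to rule out $\xi_n\to-\frac{\pi}{2}$, shows $\alpha_n\to0$ and $q_n\to q_*$, derives a refined lower bound for $\mathcal{E}_{\lambda_n}[\gamma_n]$, and then beats it asymptotically (after dividing by $\alpha_n$) with an explicitly constructed comparison curve consisting of two half figure-eight pieces joined by a segment above the obstacle. You instead perform a blow-down: after showing $L_n\to\infty$, you rescale to unit length, use the $\gamma_{\rm larc}^{\lambda_n,1,1}$-competitor bound together with lower semicontinuity and the sharp drop inequality of Proposition~\ref{prop:fig8_minimizes} to identify the limit (via the rigidity/equality case) as a half-fold figure-eight leaf, and then contradict this by showing that the rescaled first touching point converges to an interior parameter at which the limit would have to revisit its closing point. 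Both proofs use Lemma~\ref{lem:variation-angle-rho} as the decisive ``rhomb'' input — the paper to exclude $\xi_n\to-\frac{\pi}{2}$, you to obtain the uniform lower bound $l_n/L_n\geq c_\psi^2/\sup_nB[\tilde\tau_n]$ — and both use the figure-eight, but the paper only as a building block for the comparison curve, while you exploit its uniqueness statement as a rigidity theorem. Your approach avoids all explicit elliptic-integral computations and the classification of the pre-touching arc, at the price of leaning on the equality case of Proposition~\ref{prop:fig8_minimizes}; the paper's computation, conversely, is self-contained at the level of explicit formulae and yields quantitative energy gaps.

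Three small points to tighten. First, $l_n/L_n\leq\frac{1}{2}$ should be justified: by the reflection symmetry of $\gamma_n$ and of the cone obstacle, the set of touching arclength parameters is symmetric about $L_n/2$, so the first touching point satisfies $l_n\leq L_n-l_n$. Second, $y$-monotonicity on the two halves of \eqref{eq:figure-8} does not by itself give embeddedness of the leaf; but it does give that the second coordinate is strictly positive on the interior, which is exactly the fact you need (the preimage of the closing point consists only of the endpoints), so the contradiction stands. Third, one should note that $\mu_n=\lambda_n L_n^2\leq C^2$ is bounded (from $L_n\leq C/\sqrt{\lambda_n}$), which justifies extracting $\mu_n\to\mu_\infty$; this is immediate from your estimates.
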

\begin{proof}
For a contradiction we assume the opposite, i.e. there exist $\{\lambda_n\}_{n\in \mathbf{N}}$ with $\lambda_n \to 0$ ($n\to\infty$) such that $\mathcal{M}_{\lambda_n}^\lozenge \neq \emptyset$. Choose $\gamma_n \in \mathcal{M}_{\lambda_n}^\lozenge$ for all $n \in \mathbf{N}$. From Remark \ref{rem:smallenergyInfima} we infer that 
\begin{equation}\label{eq:EnergyTendsTOzero}
    \mathcal{E}_{\lambda_n}[\gamma_n] = \inf_{A_{\rm sym}^\lozenge} \mathcal{E}_{\lambda_n} \longrightarrow 0 \qquad (n \rightarrow \infty). 
\end{equation}
Let $x_n$ be the first touching point of $\gamma_n$ in terms of the original parameter, i.e., 
$x_n := \min \{ x \in [0,1] \,|\, \gamma_n^2(x) = \psi(\gamma_n^1(x)) \}$, 
and define $l_n:=\int_0^{x_n}|\gamma_n'|$. 
Then, since the signed curvature $k_n$ of $\gamma_n$ satisfies the Euler--Lagrange equation of $\mathcal{E}_{\lambda_n}$ on $(0,l_n)$ and also satisfies $k_n(0)=0$ (cf.\ Remark~\ref{rem:minimizer-touching_rhomb}), $\gamma_n|_{[0,x_n]}$ can be characterized as a line segment or a wavelike elastica. 
The former is impossible in view of the $C^1$-regularity of $\gamma_n$ and the embeddedness (cf.\ Lemma~\ref{lem:embed-touching-rhomb}), and hence we find that 
$k_n|_{[0,l_n]}$ 
is (up to reflection) given by 
\begin{equation}\label{eq:240410-1}
k_n(s) = 2 \alpha_n q_n \cn(\alpha_n s -\mathrm{K}(q_n), q_n), \quad s\in [0, l_n],
\end{equation}
for some $\alpha_n>0$ and $q_n \in [\frac{1}{\sqrt{2}},1)$ with $\lambda_n=2\alpha_n^2(2q_n^2-1)$. 
Note that by the change of variables $\xi=\am(\alpha_n s- \mathrm{K}(q_n),q_n)$ 
\begin{equation}\label{eq:calc-B-xi}
     \int_{\gamma_n \mid_{[0,l_n]}} k_n^2 \; \mathrm{d}s  = 4\alpha_n^2 q_n^2 \int_0^{l_n} \cn(\alpha_n s -\mathrm{K}(q_n), q_n)^2 \; \mathrm{d}s  = \alpha_n \int_{-\frac{\pi}{2}}^{\xi_n} \frac{4q_n^2 \cos^2\xi}{\sqrt{1-q_n^2 \sin^2\xi}} \; \mathrm{d}\xi, 
\end{equation}
where $\xi_n:=\am(\alpha_n l_n-\mathrm{K}(q_n),q_n)$.
This yields the lower bound for $\mathcal{E}_{\lambda_n}[\gamma_n]$ as follows: 
\begin{equation}\label{eq:B-gamma-n}
    \mathcal{E}_{\lambda_n}[\gamma_n] \geq B[\gamma_n |_{[0,l_n]}] =\int_{\gamma_n \mid_{[0,l_n]}} k_n^2 \; \mathrm{d}s=\alpha_n \int_{-\frac{\pi}{2}}^{\xi_n} \frac{4q_n^2 \cos^2\xi}{\sqrt{1-q_n^2 \sin^2\xi}} \; \mathrm{d}\xi.
\end{equation}
We now split the rest of the proof into several steps.

\textbf{Step~1} (\textsl{Asymptotic behavior of $\xi_n$})\textbf{.} 
Formula \eqref{eq:240410-1} implies that the arclength parametrization $\tilde{\gamma}_n|_{[0,l_n]}$ is, up to translation, rotation, and reflection,  given by 
\[
\tilde{\gamma}_n(s)= \frac{1}{\alpha_n}
\begin{pmatrix}
2\mathrm{E}(\am(\alpha_n s -\mathrm{K}(q_n),q_n), q_n))-\alpha_n s \\
2q_n \cn (\alpha_n s -\mathrm{K}(q_n), q_n) 
\end{pmatrix},
\quad s\in[0,l_n].
\]
By reparametrization $\xi=\am(\alpha_n s-\mathrm{K}(q_n), q_n)$ we have $\tilde{\gamma}_n(s)=\hat{\gamma}_n(\xi(s))$ using 
\begin{equation} \label{eq:gamma-hat}
\hat{\gamma}_n(\xi)= \frac{1}{\alpha_n}
\begin{pmatrix}
2\mathrm{E}(\xi, q_n)- \mathrm{F}(\xi,q_n) \\
2q_n \cos \xi 
\end{pmatrix}, 
\quad \xi \in [-\tfrac{\pi}{2}, \xi_n].
\end{equation}
Let $a_n$ be a point in [0,1] %on $\psi$
such that $\tilde{\gamma}_n(l_n) = (a_n, \psi(a_n))$. 
By definition of  
$A_{\rm sym}^\lozenge$
we have $a_0 \leq a_n \leq 1-a_0$. 
In view of \eqref{eq:gamma-hat} we see that 
\begin{align}\label{eq:250204-1}
\sqrt{a_n^2 +\psi(a_n)^2} &= \big| \tilde{\gamma}_n(l_n) - \tilde{\gamma}_n(0) \big| = \big| \hat{\gamma}_n(\xi_n) - \hat{\gamma}_n(-\tfrac{\pi}{2}) \big| 
\geq \frac{2q_n}{\alpha_n} |\cos{\xi_n}|. 
\end{align}
Notice that in this calculation we have ignored the  aforementioned translations, rotations and reflections as they are isometries of $\mathbf{R}^2$. 
Combining  \eqref{eq:250204-1} with \eqref{eq:B-gamma-n}, we obtain
\[
  \mathcal{E}_{\lambda_n}[\gamma_n] \geq\frac{2q_n |\cos{\xi_n}|}{\sqrt{a_n^2 +\psi(a_n)^2} } \int_{-\frac{\pi}{2}}^{\xi_n} \frac{4q_n^2 \cos^2\xi}{\sqrt{1-q_n^2 \sin^2\xi}} \; \mathrm{d}\xi
   \geq \frac{2\sqrt{2}|\cos{\xi_n}|}{\sqrt{a_n^2 +\psi(a_n)^2} } \int_{-\frac{\pi}{2}}^{\xi_n}  \cos^2\xi \; \mathrm{d}\xi, 
\]
where we also used $q_n\geq \frac{1}{\sqrt{2}}$ in the last equality.
In view of the fact that $|a_n| \leq 1$  
we may define $C_\psi := \max_{x\in[0,1]} \sqrt{x^2 +\psi(x)^2} < \infty$ and estimate 
\[
 \mathcal{E}_{\lambda_n}[\gamma_n] \geq \frac{2\sqrt{2}|\cos{\xi_n}|}{C_\psi} \int_{-\frac{\pi}{2}}^{\xi_n}  \cos^2\xi \; \mathrm{d}\xi.
\]
This together with \eqref{eq:EnergyTendsTOzero} implies that, up to a subsequence (but still denoted by $\{\xi_n\}_{n\in \mathbf{N}}$), 
\begin{align} \label{eq:asymptotic-xi_n}
\xi_n \to \infty \quad \text{or} \quad \xi_n \to \frac{\pi}{2} +(m-1)\pi \ \ \text{for some}\ \  m\in \mathbf{N}\cup\{0\}. 
\end{align}

Next we show that $\xi_n$ does not converge to $-\frac{\pi}{2}$.
To this end suppose on the contrary that $\xi_n \to -\frac{\pi}{2}$ ($n\to\infty$). 
Let $\theta_n$ be the tangential angle of $\tilde{\gamma}_n$.
By Lemma~\ref{lem:variation-angle-rho}, for each $n\in \mathbf{N}$ there is $l^*_n\in (0,l_n)$ such that 
\begin{equation} \label{theta_n-uniform-estimate}
\cos\big(\theta(l_n)-\theta(l^*_n)\big) \leq \cos c_\psi<1,
\end{equation}
where $c_\psi>0$  is defined as in Lemma~\ref{lem:variation-angle-rho}. 
On the other hand, note that by the change of variables
\[
\cos\big(\theta(l_n)-\theta(l^*_n)\big) 
=\langle \tilde{\gamma}_n'(l_n), \tilde{\gamma}_n'(l_n^*) \rangle
=\frac{\langle \hat{\gamma}_n'(\xi_n), \hat{\gamma}_n'(\xi_n^*) \rangle}{|\hat{\gamma}_n'(\xi_n)| |\hat{\gamma}_n'(\xi_n^*)|}, 
\]
where $\xi^*_n:=\am(\alpha_n l_n^* - \mathrm{K}(q_n),q_n)$. 
To calculate the right-hand side we deduce from \eqref{eq:gamma-hat} that 
\begin{equation}\label{eq:tangentofgamma}
\hat{\gamma}_n'(\xi) = \frac{1}{\alpha_n}\frac{d}{d\xi}
	\begin{pmatrix}
            2 \mathrm{E}(\xi,q_n)- \mathrm{F}(\xi,q_n) \\ 2 q_n \cos{\xi} 
        \end{pmatrix} 
=\frac{1}{\alpha_n}
	\begin{pmatrix}
           \frac{1-2q_n^2\sin^2\xi}{\sqrt{1-q_n^2\sin^2\xi}} \\ -2 q_n \sin\xi
        \end{pmatrix}.
\end{equation}
This yields 
\begin{align}
\begin{split}\label{eq:gamma-hat-angle_n}
     & \frac{\langle \hat{\gamma}_n'(\xi_n), \hat{\gamma}_n'(\xi_n^*) \rangle}{|\hat{\gamma}_n'(\xi_n)| |\hat{\gamma}_n'(\xi_n^*)|}
     = \sqrt{1-q_n^2 \sin^2\xi_n} \sqrt{1-q_n^2 \sin^2 \xi_n^* } 
     \left\langle \begin{pmatrix}
         \frac{1-2q_n^2\sin^2\xi_n}{\sqrt{1-q_n^2\sin^2\xi_n}} \\ -2 q_n \sin\xi_n
      \end{pmatrix} , \begin{pmatrix}
         \frac{1-2q_n^2\sin^2\xi_n^*}{\sqrt{1-q_n^2\sin^2\xi_n^*}} \\ -2 q_n \sin\xi_n^*
      \end{pmatrix}  \right\rangle\\ 
      & = \sqrt{1-q_n^2 \sin^2 \xi_n} \sqrt{1-q_n^2 \sin^2\xi_n^*} \left[ \frac{(1-2q_n^2 \sin^2 \xi_n)(1-2q_n^2\sin^2 \xi_n^* )}{\sqrt{1-q_n^2 \sin^2 \xi_n} \sqrt{1-q_n^2 \sin^2 \xi_n^* }}+  4q_n^2 \sin \xi_n\sin \xi_n^* \right] \\ 
      & = (1-2q_n^2 \sin^2\xi_n)(1-2q_n^2\sin^2 \xi_n^* ) + 4q_n^2 \sin \xi_n \sin \xi_n^* \sqrt{1-q_n^2 \sin^2 \xi_n} \sqrt{1-q_n^2 \sin^2 \xi_n^* }. 
\end{split}
\end{align}
Since $\frac{1}{\sqrt{2}} \leq q_n < 1$, % $\frac{1}{\sqrt{2}} \leq q_n \leq 1$, 
there exist $q_\infty \in [\frac{1}{\sqrt{2}},1]$ and a subsequence  $\{n_j\}_{j\in \mathbf{N}}$ such that $q_{n_j} \to q_\infty$ ($j\to \infty$). 
Then, combining \eqref{eq:gamma-hat-angle_n} with the assumption that $-\frac{\pi}{2} \leq \xi^*_n \leq \xi_n \to-\frac{\pi}{2}$ ($n\to \infty$), we see that 
\[
\lim_{j\to\infty} \cos\big(\theta(l_{n_j})-\theta(l^*_{n_j})\big) 
= (1-2q_\infty^2)(1-2q_\infty^2) + 4q_\infty^2  \sqrt{1-q_\infty^2} \sqrt{1-q_\infty^2 }
=1, 
\]
which contradicts \eqref{theta_n-uniform-estimate}.
Thus $\xi_n$ does not converge to $-\frac{\pi}{2}$ and this together with \eqref{eq:asymptotic-xi_n} implies that 
\begin{align} \label{eq:asymptotic-xi_n-improved}
\xi_n \to \infty \quad \text{or} \quad \xi_n \to \frac{\pi}{2} +m\pi \ \ \text{for some}\ \  m\in \mathbf{N}\cup\{0\}. 
\end{align}

\textbf{Step~2} (\textsl{Estimate parameters $\alpha_n$ and $q_n$})\textbf{.}
Thanks to the previous step, we see that $\xi_n\geq \frac{\pi}{4}$ for sufficiently large $n$. 
Combining \eqref{eq:B-gamma-n} with the fact that  $q_n^2 \geq \frac{1}{2}$, we obtain
\[
\mathcal{E}_{\lambda_n}[\gamma_n] \geq \alpha_n \int_{-\frac{\pi}{2}}^{\xi_n} \frac{4q_n^2 \cos^2\xi}{\sqrt{1-q_n^2 \sin^2\xi}} \; \mathrm{d}\xi 
\geq 2\alpha_n \int_{-\frac{\pi}{2}}^{\xi_n}  \cos^2\xi  \; \mathrm{d}\xi
\geq 2\alpha_n \int_{-\frac{\pi}{2}}^{0}  \cos^2\xi  \; \mathrm{d}\xi \geq \alpha_n\frac{\pi}{2}
\]
for large enough $n\in\mathbf{N}$. 
This together with \eqref{eq:EnergyTendsTOzero} implies that $\alpha_n \to 0$ ($n\to\infty$). 
In addition, up to a subsequence, $q_n$ converges to some $q_\infty \in [\frac{1}{\sqrt{2}}, 1]$. 
We here claim the following dichotomy: 
\begin{align}\label{eq:revision-2}
    \text{$q_\infty$ \ is either \ $q_*$ or \ $1$}, 
\end{align}
where $q_*$ is a unique zero of $q\mapsto |2\mathrm{E}(q)-\mathrm{K}(q)|$. 
To this end, let us introduce $X(\xi,q):= 2 \mathrm{E}(\xi,q) - \mathrm{F}(\xi,q)$. 
In view of \eqref{eq:gamma-hat} and \eqref{eq:250204-1} we have
\begin{equation}\label{eq:revision-1}
    \alpha_n \sqrt{a_n^2 + \psi(a_n)^2} = \alpha_n |\gamma_n(x_n)|  = \alpha_n |\hat{\gamma}_n(\xi_n)- \hat{\gamma}_n(-\tfrac{\pi}{2})| \geq |X(\xi_n,q_n) -X(-\tfrac{\pi}{2},q_n)|.
\end{equation}
Suppose that $q_\infty<1$. 
In the case of $\xi_n\to\frac{\pi}{2}+m\pi$, noting that $|X(-\tfrac{\pi}{2}+m\pi,q_n) -X(-\frac{\pi}{2},q_n)|=2(m+1)|2 \mathrm{E}(q_n) - \mathrm{K}(q_n)|$, we see that the right-hand side of \eqref{eq:revision-1} converges to $2(m+1)|2 \mathrm{E}(q_\infty) - \mathrm{K}(q_\infty)|$. 
On the other hand, combining $\alpha_n\to0$ with the fact that $\sqrt{a_n^2+\psi(a_n)^2}$ is bounded, the left-hand side of \eqref{eq:revision-1} converges to $0$. 
Thus we have $q_\infty=q_*$. 
In the case of $\xi_n\to\infty$, the right-hand side of \eqref{eq:revision-1} diverges if $q_\infty \in (0,1)\setminus\{q_*\}$. 
Consequently, under the assumption $q_\infty<1$, we must have $q_\infty=q_*$, which yields %\eqref{eq:revision-1}.
\eqref{eq:revision-2}

\textbf{Step~3} (\textsl{Energy estimate})\textbf{.}
Now we deduce an improved estimate of $\mathcal{E}_{\lambda_n}[\gamma_n]$. 
By symmetry we have to count the energy of $\tilde{\gamma}_n|_{[0, l_n]}$ twice, so that we obtain  
\begin{equation}
    \mathcal{E}_{\lambda_n}[\gamma_n] = B[\gamma_n]+ \lambda_n L[\gamma_n] 
    \geq 2B[\tilde{\gamma}|_{[0, l_n]}] + 2\lambda_n L[\tilde{\gamma}|_{[0, l_n]}].
\end{equation}
Similar to \eqref{eq:B-gamma-n} we obtain 
\begin{align}
\begin{split}
	 2B[\tilde{\gamma}|_{[0, l_n]}] =  2\alpha_n \int_{-\frac{\pi}{2}}^{\xi_n} \frac{4q_n^2 \cos^2\xi}{\sqrt{1-q_n^2 \sin^2\xi}} \; \mathrm{d}\xi \geq 2\alpha_n \int_{-\frac{\pi}{2}}^{0} \frac{4q_n^2 \cos^2\xi}{\sqrt{1-q_n^2 \sin^2\xi}} \; \mathrm{d}\xi  = \alpha_n \int_{-\frac{\pi}{2}}^{\frac{\pi}{2}} \frac{4q_n^2 \cos^2\xi}{\sqrt{1-q_n^2 \sin^2\xi}} \; \mathrm{d}\xi. 
\end{split}
\end{align}
On the other hand, by the fact that $\lambda_n=2\alpha_n^2(2q_n^2-1)$ and by the change of variables $\xi=\am(\alpha_n s-\mathrm{K}(q_n),q_n)$, we have 
\begin{equation}\label{eq:comute-L-gamma_n}
2\lambda_n L[\tilde{\gamma}|_{[0, l_n]}] = 2\lambda_n \int_0^{l_n}\; \mathrm{d}s = 4 \alpha_n (2q_n^2-1) \int_{-\frac{\pi}{2}}^{\xi_n} \frac{1}{\sqrt{1-q_n^2 \sin^2\xi}} \; \mathrm{d}\xi.
\end{equation}
Since $\xi_n\geq \frac{\pi}{4}$ holds for sufficiently large $n\in\mathbf{N}$, it follows that 
\begin{align}
	2\lambda_n L[\tilde{\gamma}|_{[0, l_n]}] & \geq 4 \alpha_n (2q_n^2-1) \int_{-\frac{\pi}{2}}^{\frac{\pi}{4}} \frac{1}{\sqrt{1-q_n^2 \sin^2\xi}}\; \mathrm{d}\xi \\
	& = 4 \alpha_n (2q_n^2-1) \mathrm{K}(q_n) +   4 \alpha_n (2q_n^2 - 1) \int_0^{\frac{\pi}{4}} \frac{1}{\sqrt{1-q_n^2 \sin^2\xi}} \; \mathrm{d}\xi  \\ & \geq  4 \alpha_n (2q_n^2-1) \mathrm{K}(q_n) +   \pi \alpha_n (2q_n^2 - 1).
\end{align}
The previous computations therefore yield 
\begin{equation}\label{eq:lowerbound-touching}
    \mathcal{E}_{\lambda_n}[\gamma_n] \geq \alpha_n \int_{-\frac{\pi}{2}}^{\frac{\pi}{2}} \frac{4q_n^2 \cos^2\xi}{\sqrt{1-q_n^2 \sin^2\xi}} \; \mathrm{d}\xi + 4 \alpha_n (2q_n^2-1) \mathrm{K}(q_n) +   \pi \alpha_n (2q_n^2 - 1).
\end{equation}

\textbf{Step~4}  (\textsl{Construction of a comparison curve})\textbf{.}
Let $\hat{\gamma}_*:[-\pi/2, \pi/2] \to \mathbf{R}^2$ be a curve defined by 
\[
\hat{\gamma}_*(\xi):=
\begin{pmatrix}
    2\mathrm{E}(\xi, q_*)-\mathrm{F}(\xi, q_*) \\
    2q_*\cos{\xi}
\end{pmatrix}.
\]
Note that $\hat{\gamma}_*$ parametrizes a 
 half-fold figure-eight elastica, see \eqref{eq:figure-8} with $n=1$.
Using this $\hat{\gamma}_*$, for each $n\in \mathbf{N}$ we define $c_n:[-\pi/2, \alpha_n+\pi/2]$ by 
\begin{align}\label{eq:def-c_n}
c_n (\xi):= 
\begin{cases}
\frac{1}{\alpha_n}\hat{\gamma}_*(\xi) & \xi \in[-\frac{\pi}{2},0], \\
\frac{1}{\alpha_n}(\xi, 2q_*)  & \xi \in [0, \alpha_n], \\
\frac{1}{\alpha_n}\hat{\gamma}_*(\xi-\alpha_n) +(1,0) \ \ & \xi \in [\alpha_n, \alpha_n+\frac{\pi}{2}]. 
\end{cases}
\end{align}
Now we show that $c_n \in A_{\rm sym}^\lozenge$ for all sufficiently large $n\in \mathbf{N}$, after suitable reparametrization. 
First, the reflectional symmetry immediately follows by construction.
Noting that 
\[
\lim_{\xi\uparrow0}c_n'(\xi)=\lim_{\xi\downarrow0}c_n'(\xi)=\lim_{\xi\uparrow\alpha_n}c_n'(\xi)=\lim_{\xi\downarrow\alpha_n}c_n'(\xi)= (\tfrac{1}{\alpha_n},0), 
\]
we see that $c_n\in C^1([-\pi/2, \alpha_n+\pi/2];\mathbf{R}^2)$. 
In addition, $c_n$ is
also of class $H^2$ since each component is of class $H^2$ and the values and tangents coincide at the gluing points.
Furthermore, by the fact that $\frac{2q_n}{\alpha_n} \geq \frac{\sqrt{2}}{\alpha_n} > \frac{1}{\alpha_n} \geq \psi(\frac{1}{2})$ for large enough $n$, we see that for all (large) $n\in \mathbf{N}$
\[
c_n^2(\xi)=\frac{2q_n}{\alpha_n}  > \psi(\tfrac{1}{2}) = \max_{z \in \mathbf{R}} \psi(z)  \geq  
    \psi(c_n^1(\xi)) \quad \text{for all}\ \  \xi \in [0, \alpha_n].
\]
Moreover, one readily checks that for $\xi \in [-{\pi}/{2},0]$ there holds $c_n^1(\xi) \leq 0$, which is why $c_n^2(\xi)\geq 0 \geq \psi(c_n^1(\xi))$ for all $\xi \in [-{\pi}/{2},0]$.
From the fact that for $\xi \in [\alpha_n, {\pi}/{2}+\alpha_n]$ one has $c_n^1(\xi)\geq1$, similarly we obtain $c_n^2(\xi)\geq 0 \geq \psi(c_n^1(\xi))$ for all $\xi \in [\alpha_n, {\pi}/{2}+\alpha_n]$. 
Thus we see that for all (large) $n\in \mathbf{N}$ $c_n \in A_{\rm sym} \subset A_{\rm sym}^\lozenge$ (up to reparametrization). 
Now we compute the energy of $c_n$. 
Similar to \eqref{eq:calc-B-xi} we can compute the bending energy of $c_n|_{[-\frac{\pi}{2},0]}=\hat{\gamma}_*|_{[-\frac{\pi}{2},0]}$ so that
\begin{equation}
    B[c_n] = \alpha_n\int_{-\frac{\pi}{2}}^\frac{\pi}{2} \frac{4q_*^2\cos^2\xi}{\sqrt{1-q_*^2\sin^2\xi}} \; \mathrm{d}\xi. 
\end{equation}
By definition we have 
\begin{align}
    \lambda_n L[c_n] &= \lambda_n \big( L[c_n|_{[-\frac{\pi}{2},0]}] + L[c_n|_{[0,\alpha_n]}] + L[c_n|_{[\alpha_n, \alpha_n+\frac{\pi}{2}]}] \big) \\
    &= \lambda_n \Big( \frac{1}{\alpha_n} \mathrm{K}(q_*) + 1 + \frac{1}{\alpha_n} \mathrm{K}(q_*) \Big) =  4\alpha_n(2q_n^2-1)\mathrm{K}(q_*)+2\alpha_n^2(2q_n^2-1), 
\end{align}
where in the last equality we used $\lambda_n=2\alpha_n^2(2q_n^2-1)$.  
Thus it follows that 
\begin{align}\label{eq:energy-c_n}
   \mathcal{E}_{\lambda_n}[c_n]  & = \alpha_n\int_{-\frac{\pi}{2}}^\frac{\pi}{2} \frac{4q_*^2\cos^2\xi}{\sqrt{1-q_*^2\sin^2\xi}} \; \mathrm{d}\xi + 4\alpha_n(2q_n^2-1)\mathrm{K}(q_*) +2\alpha_n^2(2q_n^2-1). 
\end{align}

\textbf{Step~5} (\textsl{Conclusion})\textbf{.}
Recall from Step 1 %step 1
and Step 3
%step 3
that $\gamma_n$ 
is a minimizer in $A_{\rm sym}^\lozenge$ and $c_n\in A_{\rm sym}^\lozenge$. 
Therefore
\[ \mathcal{E}_{\lambda_n}[c_n]\geq \inf_{\gamma\in A_{\rm sym}^\lozenge}\mathcal{E}_{\lambda_n}[\gamma] =\mathcal{E}_{\lambda_n}[\gamma_n]. 
\]
These together with \eqref{eq:lowerbound-touching} and \eqref{eq:energy-c_n} imply that
\begin{align}
0 \geq \mathcal{E}_{\lambda_n}[\gamma_n] - \mathcal{E}_{\lambda_n}[c_n] 
&\geq \alpha_n \int_{-\frac{\pi}{2}}^\frac{\pi}{2} \bigg(\frac{4q_n^2\cos^2\xi}{\sqrt{1-q_n^2\sin^2\xi}} -\frac{4q_*^2\cos^2\xi}{\sqrt{1-q_*^2\sin^2\xi}} \bigg)\; \mathrm{d}\xi \\
&\quad+\alpha_n(\pi-2\alpha_n)(2q_n^2-1) + 4\alpha_n(2q_n^2-1)\big(\mathrm{K}(q_n)-\mathrm{K}(q_*) \big).
\end{align}
Dividing this inequality by $\alpha_n>0$, we obtain
\begin{align}\label{eq:revision-3}
0\geq  \int_{-\frac{\pi}{2}}^\frac{\pi}{2} \bigg(\frac{4q_n^2\cos^2\xi}{\sqrt{1-q_n^2\sin^2\xi}} -\frac{4q_*^2\cos^2\xi}{\sqrt{1-q_*^2\sin^2\xi}} \bigg)\; \mathrm{d}\xi +(\pi-2\alpha_n)(2q_n^2-1) + 4(2q_n^2-1)\big(\mathrm{K}(q_n)-\mathrm{K}(q_*) \big). 
\end{align}
Here recall from \eqref{eq:revision-2} that $q_n$ tends to $q_*$ or $1$. 
If $q_n\to q_*$, then we find that \eqref{eq:revision-3} of the right-hand side tends to a strictly positive value $\pi(2q_*^2-1)$. 
If $q_n\to 1$, then \eqref{eq:revision-3} of the right-hand side diverges as $\mathrm{K}(1-)=\infty$. 
A contradiction. 
We have now shown that minimizers do not touch the obstacle. By Remark~\ref{rem:minimizer-touching_rhomb} they must be given by a penalized pinned elastica, all of which are characterized by the prototypes in Proposition~\ref{prop:PPE-classification}.
Since a trivial line segment cannot lie in $A_{\rm sym}^\lozenge$ by (A3) but $\gamma_{\rm larc}^{\lambda,1,1}$ is admissible by \eqref{eq:height-nontouching}, by using minimality of $\gamma_{\rm larc}^{\lambda,1,1}$ (cf.\ Remark~\ref{rem:energy_comparison_PPE}) we conclude that (up to reflection) $\gamma_{\rm larc}^{\lambda,1,1}$ is the only minimizer in $A_{\rm sym}^\lozenge$. 
\end{proof}

Now we apply the non-touching result for $A_{\rm sym}^\lozenge$ to the original problem.

\begin{proof}[Proof of Theorem~\ref{thm:hitting-escaping}]
Let $\bar{\lambda}$ be as in Lemma \ref{lemma:rhombproblemnontouhing}. Fix $\lambda \leq \bar{\lambda}$ and assume for a contradiction that some minimizer $\gamma_\lambda$ of $\mathcal{E}_\lambda$ in $A_{\rm sym}$ is touching, i.e.\ $I_{\gamma_\lambda} \neq \emptyset$. Notice that $\gamma^{\lambda,1,1}_{\rm larc}$ also lies in $A_{\rm sym}$, so we infer 
\begin{equation}\label{eq:contradictonmsymandmsymrhomb}
    \mathcal{E}_\lambda[\gamma_\lambda] \leq \mathcal{E}_\lambda[\gamma_{\rm larc}^{\lambda,1,1}].
\end{equation}
However, under assumption \eqref{eq:height-nontouching}, $\gamma_\lambda$ and $\gamma_{\rm larc}^{\lambda,1,1}$ both also lie in $A_{\rm sym}^\lozenge$ and according to Lemma \ref{lemma:rhombproblemnontouhing}
 $\gamma_{\rm larc}^{\lambda,1,1}$ is (up to reflection) the unique minimizer of $\mathcal{E}_\lambda$ in $A_{\rm sym}^\lozenge$. In particular one infers 
$$ \mathcal{E}_\lambda[\gamma_{\rm larc}^{\lambda,1,1}] \leq \mathcal{E}_\lambda[\gamma_\lambda].$$
 This and \eqref{eq:contradictonmsymandmsymrhomb} yield that $\mathcal{E}_\lambda[\gamma_{\rm larc}^{\lambda,1,1}] = \mathcal{E}_\lambda[\gamma_\lambda]$. In particular, $\gamma_\lambda$ is another minimizer of $\mathcal{E}_\lambda$ in $A_{\rm sym}^\lozenge$. According to Lemma \ref{lemma:rhombproblemnontouhing}, $\gamma_\lambda$ must  coincide with $\gamma_{\rm larc}^{\lambda,1,1}$ or its reflection at the $x$-axis. However, $\gamma_{\rm larc}^{\lambda,1,1}$ does not touch the obstacle, so $\gamma_\lambda \neq \gamma_{\rm larc}^{\lambda,1,1}$. The minimizer $\gamma_\lambda$ can also not coincide with the reflection of $\gamma_{\rm larc}^{\lambda,1,1}$ as this reflection does not lie in $A_{\rm sym}$.  A contradiction.
\end{proof}

\section{Touching the obstacle for large $\lambda$}\label{sect:6}

In the previous section we have learned that for small values of $\lambda> 0$  one can in general have nontouching minimizers. The range of values $\lambda$ for which the nontouching result can be obtained depends in general on $\psi$, cf. Theorem~\ref{thm:hitting-escaping}. The goal of this section is to show that for suitably large values of $\lambda$  the obstacle must be touched by each minimizer. A crucial difference is that in this section we %can give
obtain a range of values of $\lambda$ that does not depend on $\psi$ (as long as $\psi$ satisfies assumptions (A1)--(A3)). The key ingredient is the stability analysis for critical points of $\mathcal{E}_\lambda$ in $A_{\rm pin}$ performed in  \cite{MYarXiv2409}.  Recall the definition of the value $\hat{\lambda}$ in \eqref{eq:def-hat_lambda}. This will exactly coincide with the threshold value given in Theorem  \ref{thm:touching_large_lambda}, whose proof we present now. 
\begin{proof}[Proof of Theorem \ref{thm:touching_large_lambda}.]
   Let $\lambda > \hat{\lambda}$ and let $\psi: \mathbf{R} \rightarrow \mathbf{R}$ be an arbitrary obstacle  satisfying (A1)--(A3). Suppose that $\gamma \in A$ is a minimizer of $\mathcal{E}_\lambda$ in $A$. We assume for a contradiction that $I_\gamma= \emptyset$. This means in particular that $\gamma^2 > \psi \circ \gamma^1$. Define
    \begin{equation}\label{eq:deltadefinitio}
        \delta := \min_{x\in [0,1]} \big(\gamma^2(x) - \psi (\gamma^1(x)) \big) > 0.
    \end{equation}    
    According to Lemma \ref{lem:E-L_eq_on_noncoincidence}, $\gamma$ must be a penalized pinned elastica with parameter $\lambda$.  In particular, up to reparametrization and reflection, $\gamma$ lies in one of the following three classes (cf. Proposition \ref{prop:PPE-classification})
    \begin{itemize}
        \item[(C1)] $\gamma = \gamma_{\rm sarc}^{\lambda, 1,n}$ for some $n \geq n_\lambda,$ 
       \item[(C2)] $\gamma = \gamma_{\rm larc}^{\lambda, 1,n}$ for some $n \geq n_\lambda,$
       \item[(C3)] $\gamma = \gamma_{\rm loop}^{\lambda, 1,n}$ for some $n \geq 1.$
    \end{itemize} 
   First we assume that $\gamma$ lies in class either (C1) or (C2).  In this case one has 
   \begin{equation}
       n \geq n_\lambda = \left\lceil \tfrac{ \lambda}{\hat{\lambda}}\right\rceil \geq 2.
   \end{equation}
   Due to this fact, all the curves listed in (C1) and (C2) are not local minimizers in $A_{\rm pin}$, see \cite{MYarXiv2409}*{Theorem 3.7 and Theorem 3.8}. 
   This implies that there exists a sequence $\{\gamma_j\}_{j \in \mathbf{N}} \subset A_{\rm pin}$ converging to $\gamma$ in $H^2(0,1)$ such that $\mathcal{E}_\lambda[\gamma_j] < \mathcal{E}_\lambda[\gamma]$ for all $j \in \mathbf{N}$. Since $H^2(0,1)$ embeds into $C^0([0,1])$ this convergence is also uniform. In particular (due to uniform continuity of $\psi$) we deduce from \eqref{eq:deltadefinitio} that for $j$ sufficiently large there holds for any $x\in[0,1]$
\begin{equation}
    \gamma_j^2(x) - \psi \circ \gamma_j^1(x) > \frac{\delta}{2} >0.
\end{equation}
 Hence we conclude that $\gamma_j \in A$ for $j$ sufficiently large. This contradicts the minimality of $\gamma$ in $A$. Next assume that  $\gamma$ lies in class (C3). By \cite{MYarXiv2409}*{Theorem 3.6}, $\gamma$ is unstable in $A_{\rm pin}$, i.e.\ not a local minimizer in $A_{\rm pin}$. A similar argument as above yields that $\gamma$ can not be energy-minimal in $A$. The claim follows. 
\end{proof}

\appendix

\section{Elliptic integrals and functions}\label{sect:elliptic_functions}
In this article we have used the \textit{elliptic integrals}
 \begin{equation}
     \mathrm{F}(x,q) := \int_0^x \frac{1}{\sqrt{1-q^2\sin^2(\theta) }} \; \mathrm{d}\theta \qquad \textrm{and} \qquad \mathrm{E}(x,q) := \int_0^x \sqrt{1-q^2\sin^2(\theta)} \; \mathrm{d}\theta 
 \end{equation}
 for $x \in \mathbf{R}$ and $q \in (0,1)$. 
 Further we define $\mathrm{K}(q) := \mathrm{F}(\frac{\pi}{2},q)$ and $\mathrm{E}(q) := \mathrm{E}(\frac{\pi}{2},q)$.
It is known that $[0,1) \ni q\mapsto \mathrm{K}(q)$ and $[0,1]\ni q \mapsto \mathrm{E}(q)$ are strictly increasing and strictly decreasing, respectively.
More precisely, one has
\begin{align} \label{eq:diff-elliptic-int}
\mathrm{K}'(q)=\frac{\mathrm{E}(q)}{q(1-q^2)} - \frac{\mathrm{K}(q)}{q}>0, \quad \mathrm{E}'(q)= \frac{\mathrm{E}(q)}{q} - \frac{\mathrm{K}(q)}{q}<0.
\end{align}
This monotonicity leads to the following

%%%%%%%%%%%%%%%%%%%%%%%%%%%%%%%%%%%%%%
\begin{lemma}\label{lem:elliptic_2E-K}
The function $Q:[0,1)\ni q\mapsto 2\mathrm{E}(q)-\mathrm{K}(q)$ is strictly decreasing, and satisfies $Q(0)=1$, $\lim_{q\to 1}Q(q)=-\infty$.
In particular, there exists a unique $q_*\in(0,1)$ such that $2\mathrm{E}(q_*)-\mathrm{K}(q_*)=0$.
\end{lemma}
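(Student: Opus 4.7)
The plan is to prove that $Q(q) = 2\mathrm{E}(q) - \mathrm{K}(q)$ is strictly decreasing by a direct integral representation, then compute/estimate boundary values, and finally apply the intermediate value theorem.

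First, I would rewrite $Q$ as a single integral by combining the definitions:
\begin{equation*}
    Q(q) = \int_0^{\pi/2}\!\left( 2\sqrt{1-q^2\sin^2\theta} - \frac{1}{\sqrt{1-q^2\sin^2\theta}}\right) \mathrm{d}\theta = \int_0^{\pi/2} \frac{1-2q^2\sin^2\theta}{\sqrt{1-q^2\sin^2\theta}} \,\mathrm{d}\theta.
\end{equation*}
Setting $u := q^2\sin^2\theta \in [0,1)$, the integrand becomes $g(u) := (1-2u)/\sqrt{1-u}$, whose derivative
\begin{equation*}
    g'(u) = \frac{u - 3/2}{(1-u)^{3/2}}
\end{equation*}
is strictly negative on $[0,1)$. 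Since $u$ is nondecreasing in $q$ and strictly increasing for $\theta \in (0,\pi/2]$, differentiation under the integral sign yields $Q'(q) < 0$ for all $q \in (0,1)$. (As an alternative that matches the style of the paper, one can apply the formulae \eqref{eq:diff-elliptic-int} directly to get $Q'(q) = [(1-2q^2)\mathrm{E}(q) - (1-q^2)\mathrm{K}(q)]/(q(1-q^2))$ and check negativity of the numerator by another similar integral representation.)

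Next I would evaluate the boundary behavior. At $q=0$ both $\mathrm{E}(0)$ and $\mathrm{K}(0)$ equal $\pi/2$, so $Q(0) > 0$ (yielding the claimed positivity of $Q(0)$, up to what appears to be a numerical typo in the statement). For the limit $q \to 1$, one has $\mathrm{E}(1) = \int_0^{\pi/2}|\cos\theta|\,\mathrm{d}\theta = 1$, which is finite, while $\mathrm{K}(q) \to \infty$ as $q \to 1$ by monotone convergence applied to the divergent integral $\int_0^{\pi/2} (\cos\theta)^{-1}\,\mathrm{d}\theta$. Hence $Q(q) \to 2 - \infty = -\infty$.

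Finally, since $Q$ is continuous on $[0,1)$, strictly decreasing, positive at $0$, and tends to $-\infty$, the intermediate value theorem and strict monotonicity give the existence of a unique $q_* \in (0,1)$ with $Q(q_*) = 0$. The only mildly subtle step is verifying the negativity of $g'(u)$ uniformly (or equivalently checking that $(1-q^2)\mathrm{K}(q) - (1-2q^2)\mathrm{E}(q) > 0$ on all of $(0,1)$, which is not immediate in the regime $q^2 < 1/2$ if one avoids the single-integral representation); this is precisely why folding $Q$ into one integral first is the cleanest path.
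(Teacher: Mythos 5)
Your proof is correct, but it takes a different route from the paper's (implicit) argument. The paper derives the lemma in one line from the derivative formulae \eqref{eq:diff-elliptic-int}: since $\mathrm{E}'(q)<0$ and $\mathrm{K}'(q)>0$ on $(0,1)$, one has $Q'(q)=2\mathrm{E}'(q)-\mathrm{K}'(q)<0$ immediately, without ever combining the two integrals; the boundary behavior ($\mathrm{E}(q)\to 1$, $\mathrm{K}(q)\to\infty$) and the intermediate value theorem then finish the proof exactly as you do. Your single-integral representation $Q(q)=\int_0^{\pi/2}\frac{1-2q^2\sin^2\theta}{\sqrt{1-q^2\sin^2\theta}}\,\mathrm{d}\theta$ with the monotone integrand is self-contained and does not rely on \eqref{eq:diff-elliptic-int} at all, which is a small gain in elementarity; on the other hand, your closing worry is unfounded: in your alternative route the negativity of the numerator $(1-2q^2)\mathrm{E}(q)-(1-q^2)\mathrm{K}(q)$ is in fact immediate also for $q^2<\tfrac12$, since then $0<1-2q^2<1-q^2$ and $0<\mathrm{E}(q)\leq\mathrm{K}(q)$ give $(1-2q^2)\mathrm{E}(q)<(1-q^2)\mathrm{K}(q)$ directly, and the paper's argument bypasses this computation entirely. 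You are also right that the stated value $Q(0)=1$ is a typo (one has $Q(0)=2\mathrm{E}(0)-\mathrm{K}(0)=\tfrac{\pi}{2}$), and your observation that only $Q(0)>0$ is needed for the existence and uniqueness of $q_*$ is exactly the right way to handle it.
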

%%%%%%%%%%%%%%%%%%%%%%%%%%%%%%%%%%%%%%

The constant $q_*$ stands for the modulus of the so-called figure-eight elastica; more precisely, the signed curvature of the figure-eight elastica is given by $k(s)=2q_*\cn(s,q_*)$, up to similarity and reparametrization (cf.\ \cite[Definition 5.1]{MR23}). 

Next we recall the \textit{Jacobian elliptic functions}  $\cn,\sn$. 

%%%%%%%%%%%%%%%%%%%%%%%%%%%%%%%%%%%%%%
\begin{definition}[Elliptic functions] \label{def:Jacobi-function}
We define the \textit{Jacobian amplitude function} $\am(x,q)$ by the inverse function of $F(x,q)$, so that
\[
x= \int_0^{\am(x,q)} \frac{1}{ \sqrt{1-q^2\sin^2\theta} }\; \mathrm{d}\theta \quad \text{for}\ x\in\mathbf{R}.
\]
For $q\in[0,1)$, the \textit{Jacobian elliptic functions} are given by
\begin{align}
\cn(x, q):= \cos \am(x,q), \qquad \sn(x, q):= \sin \am(x,q), \qquad x\in \mathbf{R}.
\end{align}
\end{definition}
%%%%%%%%%%%%%%%%%%%%%%%%%%%%%%%%%%%%%%

The Jacobian elliptic functions have the following fundamental properties. 
%%%%%%%%%%%%%%%%%%%%%%%%%%%%%%%%%%%%%%
\begin{proposition} \label{prop:cn}
Let $\cn(\cdot,q)$ and $\sn(\cdot,q)$ be the elliptic functions with modulus $q \in [0,1)$. 
Then, $\cn(\cdot,q)$ is an even $2\mathrm{K}(q)$-antiperiodic function on $\mathbf{R}$ and, in $[0,2\mathrm{K}(q)]$, strictly decreasing from $1$ to $-1$. 
Further, $\sn(\cdot,q)$ is an odd $2\mathrm{K}(q)$-antiperiodic function and in $[-\mathrm{K}(q),\mathrm{K}(q)]$ strictly increasing from $-1$ to $1$.
\end{proposition}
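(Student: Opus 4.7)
The plan is to derive all four claims from elementary properties of the amplitude function $\am(\cdot,q)$ that is defined implicitly by
\begin{equation*}
x = \int_0^{\am(x,q)} \frac{1}{\sqrt{1-q^2\sin^2\theta}}\, \mathrm{d}\theta.
\end{equation*}
Denote the integrand by $I(\theta)$. Since $I>0$, $I$ is even in $\theta$, and $I$ is $\pi$-periodic (the latter because $\sin^2$ is), the function $\am(\cdot,q)$ inherits three structural features: (a) it is a $C^1$ strictly increasing bijection $\mathbf{R}\to\mathbf{R}$, with $\am(0,q)=0$ and $\am(\mathrm{K}(q),q) = \pi/2$ directly from the definition of $\mathrm{K}(q)$; (b) it is odd, since the antiderivative of an even function is odd; (c) it satisfies the $\pi$-translation rule
\begin{equation*}
\am(x+2\mathrm{K}(q),q) = \am(x,q)+\pi,
\end{equation*}
which follows from $\int_0^{\pi} I\, \mathrm{d}\theta = 2\mathrm{K}(q)$ (symmetry about $\pi/2$) combined with a shift of variables $\tau = \theta-\pi$ using $I(\tau+\pi)=I(\tau)$.

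Given (a)--(c), every assertion becomes a one-line consequence. For $\cn(x,q)=\cos\am(x,q)$: evenness follows from oddness of $\am$ together with evenness of $\cos$, and $2\mathrm{K}(q)$-antiperiodicity follows from (c) via $\cos(\am(x,q)+\pi) = -\cos\am(x,q)$. The same two symmetries transfer to $\sn(x,q) = \sin\am(x,q)$, now exploiting oddness of $\sin$ and $\sin(y+\pi)=-\sin y$. For the monotonicity statements: by (a) and (c), $\am(\cdot,q)$ maps $[0,2\mathrm{K}(q)]$ bijectively and strictly increasingly onto $[0,\pi]$, on which $\cos$ decreases strictly from $1$ to $-1$; composing yields the claim for $\cn$. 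Similarly, $\am(\cdot,q)$ sends $[-\mathrm{K}(q),\mathrm{K}(q)]$ strictly increasingly onto $[-\pi/2,\pi/2]$, on which $\sin$ increases strictly from $-1$ to $1$, which gives the claim for $\sn$.

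No genuine obstacle is expected: the entire argument is a transfer of elementary properties of the integrand $I$ (positivity, evenness, $\pi$-periodicity) through the defining inverse relation to $\am$, and then through the monotonicity and symmetry of $\cos$ and $\sin$ on the relevant intervals. The only step that deserves a careful line is the derivation of the translation formula in (c), which requires splitting $\int_0^{y+\pi}I\, \mathrm{d}\theta = \int_0^{\pi} I\, \mathrm{d}\theta + \int_{\pi}^{y+\pi} I\, \mathrm{d}\theta$ and using $\pi$-periodicity of $I$ to identify the second summand with $x$.
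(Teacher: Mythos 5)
Your argument is correct. The paper does not prove Proposition~\ref{prop:cn} at all --- it is recorded in the appendix as a standard fact about Jacobian elliptic functions --- so there is no in-paper proof to compare against; your derivation (transferring positivity, evenness and $\pi$-periodicity of the integrand $\theta\mapsto(1-q^2\sin^2\theta)^{-1/2}$ to the amplitude $\am(\cdot,q)$, in particular the translation rule $\am(x+2\mathrm{K}(q),q)=\am(x,q)+\pi$, and then composing with $\cos$ and $\sin$ on $[0,\pi]$ and $[-\pi/2,\pi/2]$) is precisely the standard elementary proof and fills this gap correctly.
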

%%%%%%%%%%%%%%%%%%%%%%%%%%%%%%%%%%%%%%

\section{Technical proofs}\label{sect:appendixB}

\begin{proof}[Proof of Proposition~\ref{prop:fig8_minimizes}]
Existence of a global minimizer $\gamma$ of $\mathcal{E}_\lambda$ in $A_{\rm drop}$ follows from the standard direct method. 
In what follows along the argument as in \cite{MYarXiv2409}*{Proof of Theorem 1.1} we show that $\gamma$ is an $\frac{n}{2}$-fold figure-eight elastica.

By minimality of $\gamma$ we find that $\gamma$ is also a critical point of $\mathcal{E}_\lambda = B+\lambda L$ in $A_{\rm drop}$.
Then, from the same argument as in \cite{MYarXiv2409}*{Lemma 2.2}, the signed curvature $k:[0,L]\to\mathbf{R}$ of $\gamma$ satisfies $2k''+k^3-\lambda k=0$ in $(0,L)$ and $k(0)=k(L)=0$. 
In addition, in view of \cite{Lin96}*{Proposition 3.3}, there exist $\sigma \in \{+,-\}$, $\alpha>0$, $q\in[0,1)$, and $s_0\in \mathbf{R}$ such that
\[
k(s)=2\sigma \alpha q \cn(\alpha s+s_0, q), \quad s\in[0,L], 
\]
with the relation that $2\alpha^2(2q^2-1)=\lambda$. 
By the antiperiodicity of $\cn$ (cf.\ \cite{MYarXiv2409}*{Proposition A.3}), we may choose $s_0=-\mathrm{K}(q)$.  
Moreover, noting that $k(L)=0$, we find that $L={2n}\mathrm{K}(q)/{\alpha}$ for some $n\in \mathbf{N}$.
Note that 
\[
\tilde{\gamma}^1(s)=\int_0^s \cos{\theta}(t)\;\mathrm{d}t, \quad \theta(s):=\int_0^s k(t)\;\mathrm{d}t, 
\]
where $\tilde{\gamma}$ denotes the arclength parametrization of $\gamma$. 
By calculations same as in \cite{MYarXiv2409}*{Equation 2.16}, we establish that 
\[
|\gamma^1(1)-\gamma^1(0)| = |\tilde{\gamma}^1(L) - \tilde{\gamma}^1(0)| = \frac{2n}{\alpha}|2\mathrm{E}(q) - \mathrm{K}(q)|.
\]
Since $\gamma$ must satisfy $\gamma^1(0)=\gamma^1(1)$ by the boundary condition, it follows from the above equation that $2\mathrm{E}(q) - \mathrm{K}(q)=0$, i.e., $q=q_*$. 
Furthermore, this together with $2\alpha^2(2q^2-1)=\lambda$ yields $\alpha=\alpha_*$, given by \eqref{eq:figure-8}.
Thus the signed curvature $k$ is given by \eqref{eq:figure-8_curvature} with $L={2n}\mathrm{K}(q_*)/{\alpha_*}$. 
Using the fact that $\tilde{\gamma}(s)=\int_0^s (\cos{\theta(t)},\sin{\theta(t)} )\;\mathrm{d}t$, and by considering the integral we obtain $\tilde{\gamma}$ is given by \eqref{eq:figure-8} with some $n\in\mathbf{N}$, up to reflection, rotation, and translation.

Finally, since a straightforward calculation (cf.\ \cite{MYarXiv2409}*{Section 4}) yields 
\begin{align}
    \mathcal{E}_\lambda[\gamma_{\rm loop}^{\lambda,n}]&= \int_0^L k(s)^2\;\mathrm{d}s + \lambda \frac{2n}{\alpha_*}\mathrm{K}(q_*) 
    = 2\sqrt{2}n\sqrt{\lambda}\frac{(4q_*^2-3)\mathrm{K}(q_*)+2\mathrm{E}(q_*)}{\sqrt{2q_*^2-1}}, 
\end{align}
we see that a $\frac{1}{2}$-fold figure-eight elastica is a global minimizer and it is unique. 
This fact with the choice of $n=1$ in the above equation yields \eqref{eq:drop_minimal}. 
\end{proof}

\begin{proof}[Proof of Lemma \ref{lem:existence-rho}]
Let $\{\gamma_n\}_{n\in \mathbf{N}}\subset A_{\rm sym}^\lozenge$ be a minimizing sequence of $\mathcal{E}_\lambda$ in $A_{\rm sym}^\lozenge$, i.e.,
\begin{align} \label{eq:min_seq-touching}
 \lim_{n\to\infty}\mathcal{E}_\lambda[\gamma_n]=\inf_{\gamma\in A_{\rm sym}^\lozenge}\mathcal{E}_\lambda[\gamma]. 
\end{align}
Up to reparametrization, we may suppose that $\gamma_n$ is of constant speed so that $|\gamma'_n| \equiv L[\gamma_n]$.
It follows from \eqref{eq:min_seq-touching} that there exists $C>0$ such that $\mathcal{E}_\lambda[\gamma_n]\leq C$ for any $n\in \mathbf{N}$, and hence the assumption of constant speed implies that
\begin{align}\label{eq:normalized-E_lambda}
    B[\gamma_n] = \frac{1}{L[\gamma_n]^3} \| \gamma_n'' \|_{L^2}^2.
\end{align}
This yields a uniform estimate of $\| {\gamma}_n'' \|_{L^2}$ due to the fact that $L[\gamma_n]$ is bounded by definition of $\mathcal{E}_\lambda$.
Using  $|\gamma_n'|\equiv L[\gamma_n]$ %$|\gamma_n'|\equiv L$
and the boundary condition, we also obtain the bounds on the $W^{1,\infty}$-norm. 
Therefore, $\{\gamma_n\}_{n\in\mathbf{N}}$ is uniformly bounded in $W^{2,2}$, and this implies that there are $\gamma_\infty \in W^{2,2}(0,1;\mathbf{R}^2)$ and a subsequence $\{n_j\}_{j\in\mathbf{N}}$ such that $\gamma_{n_j}$ converges to $\gamma_\infty$ in the senses of $W^{2,2}$-weak  and $C^1$-topology.
Thus the limit curve satisfies $\gamma_\infty(0)=(0,0)$, $\gamma_\infty(1)=(1,0)$, and $|\gamma_\infty^2|\geq \psi \circ \gamma_\infty^1$ in $[0,1]$. In particular $\gamma_\infty \in A_{\rm sym}^\lozenge$.
Finally, similar to \eqref{eq:normalized-E_lambda} we have $\|\gamma_\infty''\|_{L^2}^2 = L[\gamma_\infty]^3B[\gamma_\infty]$, and this with the weak lower semicontinuity for $\|\cdot\|_{L^2}$ yields 
\[
\mathcal{E}_\lambda[\gamma_\infty] \leq \liminf_{n\to\infty}B[\gamma_n] + \lambda \lim_{n\to\infty}L[\gamma_n] \leq \liminf_{n\to\infty}(B[\gamma_n] + \lambda L[\gamma_n]).
\]
Combining this with \eqref{eq:min_seq-touching}, we see that $\gamma_\infty$ is a minimizer of $\mathcal{E}_\lambda$ in $A_{\rm sym}^\lozenge$. 
\end{proof}

\bibliographystyle{abbrv}
\bibliography{ref_Mueller-Yoshizawa}

@article {GrunauOkabe25,
    AUTHOR = {Grunau, Hans-Christoph and Okabe, Shinya},
     TITLE = {Optimality of smallness conditions in {W}illmore obstacle
              problems under {D}irichlet boundary conditions},
   JOURNAL = {Nonlinear Anal. Real World Appl.},
  FJOURNAL = {Nonlinear Analysis. Real World Applications. An International
              Multidisciplinary Journal},
    VOLUME = {85},
      YEAR = {2025},
     PAGES = {Paper No. 104363},
      ISSN = {1468-1218,1878-5719},
   MRCLASS = {49Q10 (35J87 49J05)},
  MRNUMBER = {4878867},
       DOI = {10.1016/j.nonrwa.2025.104363},
       URL = {https://doi.org/10.1016/j.nonrwa.2025.104363},
}

@book {EVGAR,
    AUTHOR = {Evans, Lawrence C. and Gariepy, Ronald F.},
     TITLE = {Measure theory and fine properties of functions},
    SERIES = {Studies in Advanced Mathematics},
 PUBLISHER = {CRC Press, Boca Raton, FL},
      YEAR = {1992},
     PAGES = {viii+268},
      ISBN = {0-8493-7157-0},
   MRCLASS = {28-02 (26-02 26Bxx 46E35)},
  MRNUMBER = {1158660},
MRREVIEWER = {R.\ G.\ Bartle},
}

@article {Mandel,
    AUTHOR = {Mandel, Rainer},
     TITLE = {Boundary value problems for {W}illmore curves in
              {$\Bbb{R}^2$}},
   JOURNAL = {Calc. Var. Partial Differential Equations},
  FJOURNAL = {Calculus of Variations and Partial Differential Equations},
    VOLUME = {54},
      YEAR = {2015},
    NUMBER = {4},
     PAGES = {3905--3925},
      ISSN = {0944-2669,1432-0835},
   MRCLASS = {53A04 (34K10 49Q10)},
  MRNUMBER = {3426098},
MRREVIEWER = {Atsushi\ Fujioka},
       DOI = {10.1007/s00526-015-0925-z},
       URL = {https://doi.org/10.1007/s00526-015-0925-z},
}

@article {DNP2020,
    AUTHOR = {Dall'Acqua, Anna and Novaga, Matteo and Pluda, Alessandra},
     TITLE = {Minimal elastic networks},
   JOURNAL = {Indiana Univ. Math. J.},
  FJOURNAL = {Indiana University Mathematics Journal},
    VOLUME = {69},
      YEAR = {2020},
    NUMBER = {6},
     PAGES = {1909--1932},
      ISSN = {0022-2518,1943-5258},
   MRCLASS = {49Q20 (53E10 74G65)},
  MRNUMBER = {4170083},
MRREVIEWER = {Rafael\ Montezuma},
       DOI = {10.1512/iumj.2020.69.8036},
       URL = {https://doi.org/10.1512/iumj.2020.69.8036},
}

@article{Ysima,
	author = {Yoshizawa, Kensuke},
	date-added = {2021-11-04 19:50:10 +0900},
	date-modified = {2021-11-04 19:50:16 +0900},
	doi = {10.1137/19M1307901},
	fjournal = {SIAM Journal on Mathematical Analysis},
	issn = {0036-1410},
	journal = {SIAM J. Math. Anal.},
	mrclass = {49J40 (34B15 49J05 53A04 53E99)},
	mrnumber = {4238008},
	number = {2},
	pages = {1857--1885},
	title = {A remark on elastic graphs with the symmetric cone obstacle},
	url = {https://doi.org/10.1137/19M1307901},
	volume = {53},
	year = {2021},
	bdsk-url-1 = {https://doi.org/10.1137/19M1307901}}

@misc{MYarXiv2409,
	author = {Marius M\"{u}ller and Kensuke Yoshizawa},
	date-added = {2024-12-07 12:10:34 +0900},
	date-modified = {2024-12-07 12:11:53 +0900},
	title = {Classification and stability of penalized pinned elasticae},
	url = {https://arxiv.org/abs/2409.17877},
	year = {arXiv:2409.17877},
	bdsk-url-1 = {https://arxiv.org/abs/2102.06597},
	bdsk-url-2 = {https://doi.org/10.48550/ARXIV.2102.06597}}

@article{MY_Crelle,
	author = {Miura, Tatsuya and Yoshizawa, Kensuke},
	date-added = {2024-06-06 06:48:52 +0900},
	date-modified = {2024-06-06 06:49:08 +0900},
	doi = {10.1515/crelle-2024-0018},
	fjournal = {Journal f{\"u}r die Reine und Angewandte Mathematik},
	issn = {0075-4102},
	journal = {J. Reine Angew. Math.},
	pages = {253--281},
	title = {General rigidity principles for stable and minimal elastic curves},
	volume = {810},
	year = {2024},
	zbmath = {7840489},
	bdsk-url-1 = {https://doi.org/10.1515/crelle-2024-0018}}

@article{MY_AMPA,
	abstract = {Euler's elastica is defined by a critical point of the total squared curvature under the fixed length constraint, and its {\$}{\$}L\^{}p{\$}{\$}-counterpart is called p-elastica. In this paper we completely classify all p-elasticae in the plane and obtain their explicit formulae as well as optimal regularity. To this end we introduce new types of p-elliptic functions which streamline the whole argument and result. As an application we also classify all closed planar p-elasticae.},
	author = {Miura, Tatsuya and Yoshizawa, Kensuke},
	date = {2024/10/01},
	date-added = {2024-09-21 12:48:26 +0900},
	date-modified = {2024-09-21 12:48:49 +0900},
	doi = {10.1007/s10231-024-01445-z},
	id = {Miura2024},
	isbn = {1618-1891},
	journal = {Annali di Matematica Pura ed Applicata (1923 -)},
	number = {5},
	pages = {2319--2356},
	title = {Complete classification of planar p-elasticae},
	url = {https://doi.org/10.1007/s10231-024-01445-z},
	volume = {203},
	year = {2024},
	bdsk-url-1 = {https://doi.org/10.1007/s10231-024-01445-z}}

@article{Mueller19,
	author = {M\"{u}ller, Marius},
	date-added = {2023-11-11 15:39:44 +0900},
	date-modified = {2023-11-11 15:39:55 +0900},
	doi = {10.4171/IFB/418},
	fjournal = {Interfaces and Free Boundaries. Mathematical Analysis, Computation and Applications},
	issn = {1463-9963},
	journal = {Interfaces Free Bound.},
	mrclass = {49Q20 (28A75 53A04)},
	mrnumber = {3951579},
	number = {1},
	pages = {87--129},
	title = {An obstacle problem for elastic curves: existence results},
	volume = {21},
	year = {2019},
	bdsk-url-1 = {https://doi.org/10.4171/IFB/418}}

@article{Mueller21,
	author = {M\"{u}ller, Marius},
	date-added = {2023-11-11 15:36:10 +0900},
	date-modified = {2023-11-11 15:36:23 +0900},
	doi = {10.1007/s00245-021-09773-9},
	fjournal = {Applied Mathematics and Optimization},
	issn = {0095-4616,1432-0606},
	journal = {Appl. Math. Optim.},
	mrclass = {35R35 (35G20 49J40 49Q20)},
	mrnumber = {4316789},
	pages = {S355--S402},
	title = {The elastic flow with obstacles: small obstacle results},
	url = {https://doi.org/10.1007/s00245-021-09773-9},
	volume = {84},
	year = {2021},
	bdsk-url-1 = {https://doi.org/10.1007/s00245-021-09773-9}}

@article{MR23,
	author = {M{\"u}ller, Marius and Rupp, Fabian},
	date-added = {2023-05-18 21:38:56 +0900},
	date-modified = {2023-05-18 21:39:15 +0900},
	doi = {10.1515/acv-2021-0014},
	fjournal = {Advances in Calculus of Variations},
	issn = {1864-8258},
	journal = {Adv. Calc. Var.},
	number = {2},
	pages = {337--362},
	title = {A {Li}-{Yau} inequality for the 1-dimensional {Willmore} energy},
	volume = {16},
	year = {2023},
	zbmath = {7669697},
	bdsk-url-1 = {https://doi.org/10.1515/acv-2021-0014}}

@article {Miura16,
    AUTHOR = {Miura, Tatsuya},
     TITLE = {Singular perturbation by bending for an adhesive obstacle
              problem},
   JOURNAL = {Calc. Var. Partial Differential Equations},
  FJOURNAL = {Calculus of Variations and Partial Differential Equations},
    VOLUME = {55},
      YEAR = {2016},
    NUMBER = {1},
     PAGES = {Art. 19, 24},
      ISSN = {0944-2669,1432-0835},
   MRCLASS = {49J45 (35B25 35B27 35R35)},
  MRNUMBER = {3456944},
MRREVIEWER = {Alain\ Brillard},
       DOI = {10.1007/s00526-015-0941-z},
       URL = {https://doi.org/10.1007/s00526-015-0941-z},
}

@article {DMOY24,
    AUTHOR = {Dall'Acqua, Anna and M\"uller, Marius and Okabe, Shinya and
              Yoshizawa, Kensuke},
     TITLE = {An obstacle problem for the {$p$}-elastic energy},
   JOURNAL = {Calc. Var. Partial Differential Equations},
  FJOURNAL = {Calculus of Variations and Partial Differential Equations},
    VOLUME = {63},
      YEAR = {2024},
    NUMBER = {6},
     PAGES = {Paper No. 145, 43},
      ISSN = {0944-2669,1432-0835},
   MRCLASS = {49J40 (35B38 35B65 35J70 53A04)},
  MRNUMBER = {4761861},
       DOI = {10.1007/s00526-024-02752-2},
       URL = {https://doi.org/10.1007/s00526-024-02752-2},
}

@article {GO21,
    AUTHOR = {Grunau, Hans-Christoph and Okabe, Shinya},
     TITLE = {Willmore obstacle problems under {D}irichlet boundary
              conditions},
   JOURNAL = {Ann. Sc. Norm. Super. Pisa Cl. Sci. (5)},
  FJOURNAL = {Annali della Scuola Normale Superiore di Pisa. Classe di
              Scienze. Serie V},
    VOLUME = {24},
      YEAR = {2023},
    NUMBER = {3},
     PAGES = {1415--1462},
      ISSN = {0391-173X,2036-2145},
   MRCLASS = {49Q10 (35J87 49J10 49J40 53C42 58E10)},
  MRNUMBER = {4675964},
       DOI = {10.2422/2036-2145.202105\_064},
       URL = {https://doi.org/10.2422/2036-2145.202105_064},
}

@article {DMN18,
    AUTHOR = {Dayrens, Fran\c cois and Masnou, Simon and Novaga, Matteo},
     TITLE = {Existence, regularity and structure of confined elasticae},
   JOURNAL = {ESAIM Control Optim. Calc. Var.},
  FJOURNAL = {ESAIM. Control, Optimisation and Calculus of Variations},
    VOLUME = {24},
      YEAR = {2018},
    NUMBER = {1},
     PAGES = {25--43},
      ISSN = {1292-8119,1262-3377},
   MRCLASS = {49Q20 (49J52 49N60 53A04)},
  MRNUMBER = {3764132},
MRREVIEWER = {Dongbing\ Zha},
       DOI = {10.1051/cocv/2016073},
       URL = {https://doi.org/10.1051/cocv/2016073},
}

@book{Euler,
	author = {Euler, Leonhard},
	date-added = {2022-10-21 13:53:31 +0900},
	date-modified = {2022-10-21 14:26:22 +0900},
	publisher = {Marcum-Michaelem Bousquet \& socios, Lausanne, Geneva},
	title = {Methodus inveniendi lineas curvas maximi minimive proprietate gaudentes, sive solutio problematis isoperimetrici lattissimo sensu accepti},
	year = {1744}}

@article{Miura21,
	author = {Miura, Tatsuya},
	date-added = {2022-04-09 17:28:05 +0900},
	date-modified = {2022-04-09 17:28:14 +0900},
	doi = {10.3934/mine.2021034},
	fjournal = {Mathematics in Engineering},
	journal = {Math. Eng.},
	mrclass = {53A04 (49J05)},
	mrnumber = {4147578},
	mrreviewer = {Anders Linn\'{e}r},
	number = {4},
	pages = {Paper No. 034, 12},
	title = {Polar tangential angles and free elasticae},
	url = {https://doi.org/10.3934/mine.2021034},
	volume = {3},
	year = {2021},
	bdsk-url-1 = {https://doi.org/10.3934/mine.2021034}}

@article{Lin96,
	author = {Linn\'{e}r, Anders},
	date-added = {2022-04-09 17:19:07 +0900},
	date-modified = {2022-04-09 17:19:15 +0900},
	doi = {10.1006/jath.1996.0022},
	fjournal = {Journal of Approximation Theory},
	issn = {0021-9045},
	journal = {J. Approx. Theory},
	mrclass = {41A15 (73V20)},
	mrnumber = {1377607},
	mrreviewer = {Louise A. Raphael},
	number = {3},
	pages = {315--350},
	title = {Unified representations of nonlinear splines},
	url = {https://doi.org/10.1006/jath.1996.0022},
	volume = {84},
	year = {1996},
	bdsk-url-1 = {https://doi.org/10.1006/jath.1996.0022}}

@article{LS_JDG,
	author = {Langer, Joel and Singer, David A.},
	date-added = {2022-04-09 17:15:13 +0900},
	date-modified = {2022-04-09 17:15:22 +0900},
	fjournal = {Journal of Differential Geometry},
	issn = {0022-040X},
	journal = {J. Differential Geom.},
	mrclass = {58E10 (53C22)},
	mrnumber = {772124},
	mrreviewer = {Gudlaugur Thorbergsson},
	number = {1},
	pages = {1--22},
	title = {The total squared curvature of closed curves},
	url = {http://projecteuclid.org/euclid.jdg/1214438990},
	volume = {20},
	year = {1984},
	bdsk-url-1 = {http://projecteuclid.org/euclid.jdg/1214438990}}

@article{DD18,
	author = {Dall'Acqua, Anna and Deckelnick, Klaus},
	date-added = {2022-04-09 17:08:37 +0900},
	date-modified = {2022-04-09 17:08:42 +0900},
	doi = {10.1137/17M111701X},
	fjournal = {SIAM Journal on Mathematical Analysis},
	issn = {0036-1410},
	journal = {SIAM J. Math. Anal.},
	mrclass = {49J05 (49J40 49N60 53A04)},
	mrnumber = {3742685},
	mrreviewer = {Igor Vel\v{c}i\'{c}},
	number = {1},
	pages = {119--137},
	title = {An obstacle problem for elastic graphs},
	url = {https://doi.org/10.1137/17M111701X},
	volume = {50},
	year = {2018},
	bdsk-url-1 = {https://doi.org/10.1137/17M111701X}}

\end{document}